\documentclass[10pt]{article}
\usepackage{amsmath,amsthm,amssymb,a4wide} 
\usepackage[margin=2.95cm]{geometry}
\usepackage{graphicx}
\usepackage{epsfig} 
\usepackage{slashed}  
\usepackage[hidelinks]{hyperref}
\setcounter{tocdepth}{2}

\usepackage[T1]{fontenc}
\let\OLDthebibliography\thebibliography
\renewcommand\thebibliography[1]{
  \OLDthebibliography{#1}
  \setlength{\parskip}{2pt}
  \setlength{\itemsep}{2pt plus 0.3ex}
}

\usepackage{cite}
\allowdisplaybreaks

\theoremstyle{plain}  
\newtheorem{theorem}{Theorem}[section]
\newtheorem{proposition}[theorem]{Proposition}
\newtheorem{lemma}[theorem]{Lemma}
\newtheorem{corollary}[theorem]{Corollary}

\theoremstyle{remark}

\numberwithin{equation}{section} 
\numberwithin{figure}{section}  
\usepackage{amssymb, amscd, mathrsfs, wasysym} 


\newcommand{\di}{\textrm{d}}
\newcommand{\cf}{\mathcal{C}_f}
\newcommand{\ci}{\mathcal{C}_i}

\newcommand \la \langle
\newcommand \ra \rangle

\newcommand \Kcal {\mathcal{K}}
\newcommand \Hcal {\mathcal{H}}

\newcommand \underdel {\underline \partial}

\newcommand \trianglerightNEW \triangleright

\newcommand \auth {\textsc}

\newcommand \bei {\begin{itemize}}
\newcommand \eei {\end{itemize}}
\newcommand \be {\begin{equation}}
\newcommand \bel {\be\label}
\newcommand \ee {\end{equation}}
\newcommand \bea {\be\aligned}
\newcommand \eea {\endaligned\ee}
\newcommand \del \partial
\newcommand \RR {\mathbb R}

\newcommand \eps \epsilon

\newcommand{\tw}{\tilde{w}}
\newcommand{\tv}{\tilde{v}}
\newcommand{\tW}{\widetilde{W}}
\newcommand{\tWz}{\widetilde{W}_0}
\newcommand{\tV}{\widetilde{V}}
\newcommand{\hV}{\widehat{V}}
\newcommand{\define}{:=}

\usepackage[most]{tcolorbox} 

\let\oldmarginpar\marginpar
\renewcommand\marginpar[1]{\-\oldmarginpar[\raggedleft\footnotesize #1]%
{\raggedright\footnotesize #1}}

\newcommand{\myfootnote}[1]{
    \renewcommand{\thefootnote}{}
    \footnotetext{\hspace{-16.5pt}\scriptsize#1}
    \renewcommand{\thefootnote}{\arabic{footnote}}
}

\begin{document}
\title{\bf \Large 
Two dimensional wave--Klein-Gordon equations \\ with semilinear nonlinearities} 
\author{Shijie Dong${}^\ast$
and Zoe Wyatt${}^\dagger$
}
\date{}
\maketitle

\begin{abstract}
In this paper we investigate the small data global existence and pointwise decay of solutions to two systems of coupled wave--Klein-Gordon equations in two spatial dimensions. In particular, we consider  critical (in the sense of time decay) semilinear nonlinearities for the wave equation and below-critical semilinear nonlinearities for the Klein-Gordon equation, a situation that has not been studied before in the context of coupled wave and Klein-Gordon equations. An interesting feature of our two systems is that the below-critical nonlinearity causes the Klein-Gordon field to lose its linear behaviour close to the light cone, even though it enjoys optimal time decay. 
\end{abstract}

\section{Introduction}
\myfootnote{${}^\ast$Southern University of Science and Technology, SUSTech International Center for Mathematics, and Department of Mathematics, 518055 Shenzhen,  China.
${}^\dagger$Department of Mathematics, King’s College London, Strand Building, Strand, London, WC2R 2LS. \\
{\sl Email addresses}: ${}^\ast$shijiedong1991@hotmail.com, dongsj@sustech.edu.cn, ${}^\dagger$zoe.wyatt@kcl.ac.uk \\
{\sl MSC codes:} 35L05, 35L52, 35L70 \\
}
We are interested in this paper in systems of PDEs of the form
\bea\label{eq:intro11}
- \Box w = F_w, \quad 
- \Box v + v = F_v,
\eea
where the flat wave operator is $-\Box \define \del_t^2 - \sum_{a=1}^n \del_{x^a}^2$. 
The inhomogeneities $F_w, F_v$ may, or may not, couple the two equations together. We consider only spatial dimensions $n\geq 2$. 

The study of systems of the type \eqref{eq:intro11} has a long and active history in the analysis of PDEs and can play an illuminating role in our understanding of mathematical models of physical phenomena. A stimulating goal is to understand those nonlinearities $F_w, F_v$ for which small-data perturbations of the zero solution to \eqref{eq:intro11} lead to either finite-time blow-up or to small-data global existence. There has been much research investigating such conditions in the uncoupled case: for example, the null condition for a wave equation. In the coupled case, the problem becomes more complicated. We recall the pioneering works in this direction in $n=3$ of \cite{Bachelot,Georgiev}.  For $n=2$ however, the topic of the present paper, the understanding of wave--Klein-Gordon interactions is less complete than $n=3$ and general classification results, apart from some special cases discussed below, are not yet within reach. 

In the present paper we initiate the study of 
quadratic nonlinearities $F_w, F_v$ (coupling the wave and Klein-Gordon fields together), one of which is critical and the other is below-critical. 
Throughout this paper we use ``critical'', short for ``critical-in-time-decay'', to mean  that, assuming $w$ and $v$ obey linear\footnote{Recall that for the uncoupled and linear equations, we have the decay estimates
\begin{equation}\label{eq:RoughDecay}
- \Box w = 0 \Rightarrow |w| \lesssim t^{-\frac{n-1}{2}}, \quad 
- \Box v + m^2 v = 0 \Rightarrow |v| \lesssim m^{-1} t^{-\frac{n}{2}}.
\end{equation}} decay estimates, we have  
\begin{align}\label{intro:crit1}
\lim_{t\to\infty}\int_{t_0}^t \|F\|_{L^2(\RR^n)}\di t = +\infty, \quad \text{and} \quad
\int_{t_0}^t \|(t+r)^{-\delta}F \|_{L^2(\RR^n)}\di t &\leq C, \quad \forall\, t\geq t_0\,, \delta>0.
\end{align}
Analogously, by ``below-critical'' (short for below-critical-in-time-decay) we mean that \eqref{intro:crit1} holds and, for some $\delta>0$,
\be 
\lim_{t\to\infty} \int_{t_0}^t \|(t+r)^{-\delta}F \|_{L^2(\RR^n)}\di t \to + \infty,
\ee
and by ``above-critical'' we mean that
\be 
\lim_{t\to\infty} \int_{t_0}^t \big\|F \big\|_{L^2(\RR^n)}\di t \leq C.
\ee
In this paper we prove the global existence and asymptotic decay of solutions to two model PDEs satisfying the condition that $F_w$ is critical and $F_v$ is below-critical.  Our Model I reads:
\hspace{-1cm}
\begin{align}\notag
-\Box w = v^2, \qquad 
-\Box v + v = P^{\alpha\beta}\del_\alpha w \del_\beta w,
\end{align}
where $P^{\alpha\beta}$ are constants that do \emph{not} satisfy the null condition. Our Model II reads:
\begin{align}\notag
-\Box \tw = P^\alpha \del_\alpha (\tv^2),\qquad 
-\Box \tv + \tv = \tw^2,
\end{align}
where again $P^\alpha$ are some arbitrary constants. We assume our initial data are compactly supported and sufficiently small in high-regularity Sobolev norms. 

Our study of Models I and II is motivated by the broader goal of better understanding theories in mathematical physics that involve the interaction of massive and massless fields. This includes, but is not limited to, the Dirac-Klein-Gordon equations (e.g.  \cite{DW-2021}) as well as equations like  Klein-Gordon-Zakharov and the Einstein-Klein-Gordon equations. We also believe our results can be used to understand certain effective field theories as recently discussed in \cite{Physics}. 
However, we emphasise that the results of this paper should not be viewed as the complete story, but rather as a stepping stone towards completing our understanding of wave--Klein-Gordon phenomena. We also hope that new methods in our proof will aid in future work studying and classifying wave--Klein-Gordon behaviour in spatial dimension $n=2$.

\subsection{Previous Work}
\paragraph*{Uncoupled Wave and Klein-Gordon Equations.}
We first briefly recall some previous results on wave equations in $n=2$ with quadratic nonlinearities. Consider the two PDEs
\be\label{eq:introblow-up}
-\Box w_1 =(w_1)^2, \qquad - \Box w_2 = (\del_t w_2)^2.
\ee
Both nonlinearities here are below-critical, and small-data finite time blow-up is known to hold by \cite{G81} for the first PDE, for the second one by \cite{Agemi, Zhou}. 
As is well known, additional structure within a nonlinearity, other than just the `power' of the decay, can ensure global-existence. The prime example of such structure is Klainerman's null condition \cite{Klainerman80}. In the simplest case this leads to the PDE
\be\label{eq:NullPDE}
-\Box w = (\del_t w)^2 - (\del_x w)^2 - (\del_y w)^2.
\ee
The $n=3$ case was treated much earlier in \cite{Christodoulou, Klainerman86}.

Finally we turn to Klein-Gordon equations $- \Box v + v= F_v$ for $n=2$. Due to the faster linear decay for Klein-Gordon fields, there are several classical works proving global existence for many $F_v$  (e.g. \cite{ST93, OTT2, Hormander, DFX04}). Blow-up is more difficult to arrange for however, and we note the first work to do is \cite{KT} for a below-critical nonlinearity.

\paragraph{Coupled Wave--Klein-Gordon Equations.}
In terms of the vector-field method, coupled wave--Klein-Gordon equations present new difficulties due to the fact that the scaling vector field $t\del_t + x^a\del_a$ does not commute with the Klein-Gordon operator. In two spatial dimensions, numerous difficulties also arise due to the fact that linear wave and Klein-Gordon fields decay very slowly in such low dimensions, and that Klein-Gordon fields do not enjoy the improved null-direction decay estimates that hold for the wave equation.

In the present paper we study our coupled wave--Klein-Gordon systems using the vector-field method adapted to a hyperboloidal foliation of a forward light cone in Minkowski spacetime. 
This method originates in \cite{Klainerman85, Klainerman93}, see also \cite{Hormander},  in the context of  Klein-Gordon equations, and was later revisited for coupled wave--Klein-Gordon equations in \cite{PLF-YM-book}  under the name of the ``hyperboloidal foliation method'' (see also \cite{Tataru} for for the pioneering work on Strichartz estimates for wave equations in the hyperbolic space).
In this method, the scaling vector field is avoided, which allows for an equal treatment of both wave and Klein-Gordon fields.

The study of coupled wave--Klein-Gordon systems has been the subject of much research, and given this we primarily only refer to works on two spatial dimensions. 
Recently the study of coupled wave and Klein-Gordon systems in $\RR^{1+2}$ using the hyperboloidal foliation method was initiated in \cite{Ma2017b}. This has led to  global existence results  for a large class of PDE systems   including null forms and other interesting nonlinearities, under the assumption of compact initial data \cite{ Ma2019, Ma2020,  Dong2006, DM20} and also with no compactness assumption \cite{Dong2005, DongMa, DongMaYuan}. 
There have also been other works \cite{Stingo, IStingo} that have investigated quasilinear wave and Klein-Gordon systems under the null condition. By avoiding the hyperboloidal foliation, these results do not need to restrict to compact data. 
A more general result on cubic nonlinearities has been given in  \cite{C21}, and is along the lines of a `classification' result as alluded to in the beginning of this paper. 

\subsection{Main Statements and Difficulties}\label{sec:introDifficulties}
We can now state our first result for the Model I PDE system. 

\begin{theorem}\label{thm1}
Consider in $\RR^{1+2}$ the coupled wave--Klein-Gordon system
\begin{subequations}\label{eq:model-1}
\begin{align}
-\Box w &= v^2, \label{eq:model-1a} \\
-\Box v + v &= P^{\alpha\beta}\del_\alpha w \del_\beta w,\label{eq:model-1b}
\\
\big(w, \del_t w, v, \del_t v \big) (t_0) &= (w_0, w_1, v_0, v_1). \notag
\end{align}
\end{subequations}
 Let $N\geq 10$ be an integer. There exists $\eps_0>0$ such that for all $0<\eps<\eps_0$ and all compactly supported initial data satisfying the smallness condition
\bea \label{eq:thm1data}
\|w_0\|_{H^{N+1}(\mathbb{R}^2)} + \|w_1\|_{H^{N}(\mathbb{R}^2)}
+ \|v_0\|_{H^{N+1}(\mathbb{R}^2)} + \|v_1\|_{H^{N}(\mathbb{R}^2)}
\leq \eps,
\eea
the Cauchy problem \eqref{eq:model-1} admits a global-in-time solution $(w, v)$. Furthermore there exists a large constant $C>0$ such that the solution satisfies the following pointwise decay estimates
\bea \label{thm1:est}
|w(t,x)| &\leq C\eps t^{-1/2}(t-r)^{1/2}, \quad &|\del w(t, x)| &\leq C \eps t^{-1/2}(t-r)^{-1/2}, \\
|v(t,x)| &\leq C\eps t^{-1}, \quad &|\del v(t,x)| &\leq C\eps t^{-1}.
\eea
\end{theorem}

The major issue with equations \eqref{eq:model-1} is that the best we can expect for the nonlinearities (in the flat constant $t$ slices) is
\bel{eq:666}
\| P^{\alpha\beta}\del_\alpha w \del_\beta w\|_{L^2(\RR^2)} \lesssim t^{-1/2}, \quad \| v^2\|_{L^2(\RR^2)}\lesssim t^{-1}.
\ee
Thus, one quantity at the \emph{borderline} of integrability and the other \emph{strictly below} the borderline of integrability. To the authors' knowledge, such a situation for two dimensional coupled wave--Klein-Gordon equations has \emph{not} been studied before, and a direct application of  vector-field method ideas from classical works would not suffice. Moreover, for all coupled wave--Klein-Gordon equations which are known to admit small smooth global solutions, their nonlinearities behave no worse than critically. For instance, if the nonlinear term $P^{\alpha\beta}\del_\alpha w \del_\beta w$ satisfies the null condition, which is already a hard problem to study in two spatial dimensions, then the Model I problem admits a small global solution as shown in \cite{Ma2020}. 

\paragraph{Novelties in the Proof.} Similar to \eqref{eq:666}, in the hyperboloidal foliation with hyperboloidal time $s = \sqrt{t^2 - r^2}$, the best behaviour we can expect for the nonlinearities is 
\bea\notag
\| (s/t) \del w \|_{L^2_f(\Hcal_s)} &\lesssim 1,
\qquad
&|\del w| &\lesssim t^{-1/2} (t-r)^{-1/2} \lesssim s^{-1},
\\
\| v \|_{L^2_f(\Hcal_s)} &\lesssim 1,
\qquad
&|v| &\lesssim t^{-1}.
\eea
Here $\Hcal_s$ are constant $s$-surfaces defined in \eqref{def:prelioms} and $L^2_f(\Hcal_s)$ is defined in \eqref{flat-int}.
Naive calculations lead us to the estimates
\bea\notag
\| P^{\alpha\beta}\del_\alpha w \del_\beta w\|_{L^2_f(\Hcal_s)} 
&\lesssim \| (s/t) \del w \|_{L^2_f(\Hcal_s)} \| (t/s) \del w \|_{L^\infty(\Hcal_s)}
\lesssim 1,
\\
\| v^2\|_{L^2_f(\Hcal_s)} 
&\lesssim \| v\|_{L^2_f(\Hcal_s)} \| v\|_{L^\infty(\Hcal_s)} 
\lesssim s^{-1}.
\eea
We see one quantity is at the borderline of integrability and the other is below the borderline of integrability.
Furthermore the transformation $
V = v-P^{\alpha\beta}\del_\alpha w \del_\beta w$
which we would like to use to improve our control on $v$, cannot be performed at the highest order of regularity due to top-order regularity issues with the $(\del w)^2$ term. 
Thus for the highest order of regularity in our bootstrap argument new ideas are required.

One of our key ideas is to use a $(t-r)$-weighted energy estimate for the wave equation in \eqref{eq:model-1} (see Proposition \ref{prop:GhostWeight}). Such a weighted energy estimate derives from work of Alinhac  \cite{ Alinhac01b} and has recently been applied to coupled wave--Klein-Gordon systems and the hyperboloidal foliation by the first author in \cite{Dong2006}. Using this, we find
\bea\notag
\| P^{\alpha\beta}\del_\alpha w \del_\beta w\|_{L^2_f(\Hcal_s)} 
&\lesssim \| (s/t) (t-r)^{-1} \del w \|_{L^2_f(\Hcal_s)} \| (t/s) (t-r) \del w \|_{L^\infty(\Hcal_s)}
\lesssim 1,
\\
\| (t-r)^{-1} v^2\|_{L^2_f(\Hcal_s)} 
&\lesssim \| v\|_{L^2_f(\Hcal_s)} \| (t-r)^{-1} v\|_{L^\infty(\Hcal_s)} 
\lesssim s^{-2}.
\eea
Now we have one quantity which is $s^{-1}$ above the borderline of integrability, and the other which is $s^{-1}$ below the borderline of integrability. The situation for $(t-r)$-weighted $L^2$ terms is much better than the unweighted case, and so this gives us hope to close the bootstrap argument for the highest order energy cases provided that we can first show sharp $L^\infty$ estimates for derivatives of the wave component. 

Indeed obtaining sharp pointwise decay estimates on $|\del w|$ is another major issue with the equations \eqref{eq:model-1}, which is resolved by Proposition \ref{prop:RefinedWeightedLinfty}. Indeed Proposition \ref{prop:RefinedWeightedLinfty} is stated in a more general form as  its proof is interesting and we hope it can be applied to other equations. The proof is based on an $L^\infty-L^\infty$ pointwise estimate for undifferentiated waves by Ma \cite{Ma2020}. We then combine both a conformal energy estimate and good commutation properties between the conformal Killing vector field and vector fields tangential to the hyperboloids.
Finally, for the lower order energy cases the aforementioned transformation from $v$ to $V$ is sufficient.

We now turn to our second result for the Model II PDE system. 

\begin{theorem}\label{thm2}
Consider in $\mathbb{R}^{1+2}$  the coupled wave--Klein-Gordon system 
\begin{subequations}\label{eq:model-2}
\begin{align}
-\Box \tw &= P^\alpha \del_\alpha (\tv^2), \label{eq:model-2a}
\\
-\Box \tv + \tv &= \tw^2, \label{eq:model-2b}
\\
\big(\tw, \del_t \tw, \tv, \del_t \tv \big) (t_0) &= (\tw_0, \tw_1, \tv_0, \tv_1),\notag
\end{align}
\end{subequations}
where $P^\alpha$ are arbitrary constants. Let $N\geq 8$ be an integer. There exists $\eps_0>0$ such that for all $0<\eps<\eps_0$ and all compactly supported initial data satisfying the smallness condition
\bea \label{eq:thm2data}
\|\tw_0\|_{H^{N+1}(\mathbb{R}^2)} + \|\tw_1\|_{H^{N}(\mathbb{R}^2)}
+ \|\tv_0\|_{H^{N+1}(\mathbb{R}^2)} + \|\tv_1\|_{H^{N}(\mathbb{R}^2)}
\leq \eps,
\eea
the Cauchy problem \eqref{eq:model-2} admits a global-in-time solution $(\tw, \tv)$. Furthermore there exists a large constant $C>0$ and $\delta'\ll1$ such that the solution satisfies the pointwise decay estimates 
\bea \label{thm2:est}
|\tw(t,x)| &\leq C\eps t^{-1/2}(t-r)^{-1/2+\delta'}, \quad &|\del \tw(t, x)| &\leq C \eps t^{-1/2}(t-r)^{-1/2}, \\
|\tv(t,x)| &\leq C\eps t^{-1}, \quad &|\del \tv(t,x)| &\leq C\eps t^{-1}.
\eea
\end{theorem}
Similar to Model I, the best we can expect for the nonlinearities is
\be\notag
\| \tw^2 \|_{L^2(\RR^2)} \lesssim t^{-1/2}, \quad \|\del(\tv^2)\|_{L^2(\RR^2)}\lesssim t^{-1}.
\ee
The divergence structure appearing in \eqref{eq:model-2a} makes this problem a lot more tractable (indeed, we do not require Proposition \ref{prop:RefinedWeightedLinfty}) and so we defer further details to Section \ref{sec:Model2}. Nevertheless we emphasise that such divergence structure appears naturally in physics, such as for  massless Dirac fields and the Klein-Gordon--Zakharov equations (e.g. \cite{OTT, Dong2006, DW-2021}).

\paragraph{Loss of Decay.}
Following \cite{L-R-annals}, consider momentarily in three spatial dimensions the system
\bea\label{eq:WeakNullPDE}
-\Box w = (\del_t u)^2 ,
\quad 
-\Box u = (\del_t w)^2 - (\del_x w)^2- (\del_y w)^2- (\del_z w)^2.
\eea
If one swaps the nonlinearities in \eqref{eq:WeakNullPDE}, then the equation for $u$ leads to finite-time blow-up \cite{John2}. The ideas in \cite{L-R-annals}  can be used to show that all sufficiently small, smooth and compactly supported initial data lead to global solutions under \eqref{eq:WeakNullPDE}. Moreover the $u$ component will obey the linear decay estimate of $|\del u| \lesssim t^{-1}$ while the $w$ component will suffer from a loss of decay $|\del w|\lesssim t^{-1} \log (1+t)$ due to the absence of the null condition in its nonlinearity $(\del_t u)^2$.

In a similar way, if the nonlinearities $F_w, F_v$ in our Model I were to be swapped (so \eqref{eq:model-1a} $\leftrightarrow$ \eqref{eq:model-1b}), then we would in fact get finite-time blow-up for the resulting wave equation. Moreover, analogously to $w$ in the previous paragraph (i.e., in \eqref{eq:WeakNullPDE}), we get a loss of decay in the region $|t-r|\leq C$ ($C>0$ is a constant) close to the light cone for the Klein-Gordon component $v$.

To understand what we mean here, recall from \cite{Klainerman93, Hormander} that a linear Klein-Gordon field can decay as $
|v| \lesssim \big(s/t \big)^\ell t^{-1}$
for all $\ell\in \mathbb{N}_{\geq 0}$, as long as the initial data are sufficiently smooth. Since $(s/t) \sim t^{-1/2}$ near the light cone $t=r$, this implies a linear decay rate 
\be \label{eq:introkgd}
|v| \lesssim t^{-1-\ell/2}.
\ee

Next, we see from Lemma \ref{lem:null} that  null nonlinear terms $\eta^{\alpha\beta} \del_\alpha w \del_\beta w$ roughly speaking enjoy an extra $(s/t)$ decay compared with general $P^{\alpha\beta} \del_\alpha w \del_\beta w$. Thus, due to the absence of the null condition in the nonlinearity \eqref{eq:model-1b} and the estimates \eqref{thm1:est}, the Klein-Gordon field $v$ cannot decay as fast as \eqref{eq:introkgd}.
A similar loss of decay result also holds for Model II. 

Furthermore the above arguments explain  why the wave nonlinearity \eqref{eq:model-1a} can be generalised to  $P_1 v^2 + P_2 \del_\alpha v \del^\alpha v$, with $P_1, P_2$ constants, but not to $P^{\alpha\beta}\del_\alpha v \del_\beta v$  (indeed using our current strategy the only issues arise when closing the highest order energy).

\paragraph{Outline.} The rest of the paper is organised as follows. In Section \ref{sec:prelim}, we introduce some notation regarding the hyperboloidal foliation and give a brief outline of quadratic semilinear null forms. In Section \ref{sec:Tools}, we present various technical tools required for our later bootstrap arguments (energy estimates, pointwise estimates, etc.).  Finally, in Sections \ref{sec:Model1} and \ref{sec:Model2} we present the proofs of Theorems \ref{thm1} and \ref{thm2} respectively. 

\section{Preliminaries}\label{sec:prelim}

We first state some notation concerning the hyperboloidal foliation method, taken from \cite{PLF-YM-book}. We adopt the signature $(-, +, +)$ in the $(1+2)$--dimensional Minkowski spacetime $(\mathbb{R}^{1+2}, \eta=\text{diag}(-1,1,1)$, and for the point $(t, x) = (x^0, x^1, x^2)$ in Cartesion coordinates we denote its spatial radius by $r \define | x | = \sqrt{(x^1)^2 + (x^2)^2}$. We write $\del_\alpha=\del_{x^\alpha}$ (for $\alpha=0, 1, 2$) for partial derivatives and $
L_a \define x_a \del_t + t \del_a$, where $a= 1, 2$, for the Lorentz boosts. With our sign convention, $x_1=x^1, x_2=x^2$. We denote the scaling vector field by $
L_0 = t\del_t + x^1 \del_1 + x^2 \del_2$. 

We define the following subsets of Minkowski spacetime 
\bea \label{def:prelioms}
\Kcal&\define \{(t, x): r< t-1 \},
\quad 
\del\Kcal\define\{(t, x): r= t-1 \},
\\
\Hcal_s&\define \{(t, x): t^2 - r^2 = s^2 \}, \quad s>1,
\\
\Kcal_{[s_0, s_1]} &\define \{(t, x)\in\Kcal: s_0^2 \leq t^2- r^2 \leq s_1^2\},
\\
\del\Kcal_{[s_0, s_1]}&\define \{(t, x): s_0^2 \leq t^2- r^2 \leq s_1^2\,, \, r=t-1 \}.
\eea
Throughout the paper, we consider functions defined in the interior of the future light cone $\Kcal$, with vertex $(1, 0, 0)$ and boundary $\del\Kcal$. We will consider hyperboloidal hypersurfaces $\Hcal_s$ with $s>1$ foliating the interior of $\Kcal$. We define $\Kcal_{[s_0, s_1]}$ to denote subsets of $\Kcal$ limited by two hyperboloids $\Hcal_{s_0}$ and $\Hcal_{s_1}$ with $s_0 \leq s_1$, and let $\del\Kcal_{[s_0, s_1]}$ denote the conical boundary. 

The semi-hyperboloidal frame is defined by
\bel{eq:semi-hyper}
\underdel_0\define \del_t, \qquad \underdel_a\define \tfrac{L_a}{t} = \tfrac{x_a}{t}\del_t+ \del_a.
\ee
Note that the vectors $\underdel_a$ generate the tangent space to the hyperboloids. Likewise we have the following relation
$$
\del_a = \underline{\del}_a - {x_a \over t} \del_t.
$$
We also introduce 
$$\underdel_\perp\define \del_t+\tfrac{x^a}{t}\del_a = \tfrac{L_0}{t}$$
which is orthogonal to the hyperboloids.


We note the identities
\bel{deriv-identities}
\del_t
=
\tfrac{t^2}{s^2} \big( \underdel_\perp -\tfrac{x^a}{t} \underdel_a \big),
\qquad
\del_a
=
-\tfrac{t x^a}{s^2} \underdel_\perp + \tfrac{x^a x^b}{t^2} \underdel_b + \underdel_a,
\ee
which can be used to deduce pointwise decay of $\del_\alpha w$ for a wave component $w$. We also recall the following decomposition of the flat wave operator in the semi-hyperboloidal frame:  
\bea \label{eq:wavedecomp}
-\Box = \del_t^2 - \sum_{a=1}^2 \del_a^2 
&= \left(\tfrac{s}{t}\right)^2 \del_t \del_t +2\tfrac{x^a}{t} \underline{\del}_a\del_t
-\sum_{a=1}^2 \underline{\del}_a\underline{\del}_a 
- \tfrac{r^2}{t^3}\del_t+\tfrac{2}{t}\del_t
\\
&= \left(\tfrac{s}{t}\right)^2 \del_t \del_t 
+t^{-1} \big( 2\tfrac{x^a}{t} L_a \del_t
-\sum_{a=1}^2 L_a \underline{\del}_a 
- \tfrac{r^2}{t^2}\del_t+2\del_t \big).
\eea
Finally we recall the rotation vector field $$\Omega_{12}\define x^1 \del_2 - x^2 \del_1 = \tfrac{x^1}{t} L_2 - \tfrac{x^2}{t}L_1.$$
Thanks to the second equality above, and our assumption of compact support, we do not need to use $\Omega_{12}$ in this paper. 

\paragraph*{Standard notation.}
Throughout the paper, we use $A\lesssim B$ to denote that there exists a generic constant $C>0$ such that $A\leq BC$.  Spacetime indices are represented by Greek letters $\alpha, \beta, \gamma\in\{0,1,2\}$ while spatial indices are denoted by Roman letters $a, b, c\in\{1,2\}$. We adopt the Einstein summation convention. For the ordered set $\{ \Gamma_i\}_{i=1}^5\define\{ \del_{0},\del_1, \del_2, L_1,L_2\}$, and for a multi-index $I=(\alpha_1, \ldots, \alpha_5)$ of length $|I|\define \alpha_1 + \ldots \alpha_5 =  m$ we write the product of vector-fields by $Z^I\define\Gamma_1^{\alpha_1}\Gamma_2^{\alpha_2}\Gamma_3^{\alpha_3}\Gamma_4^{\alpha_4}\Gamma_5^{\alpha_5}$.

\subsection{Semilinear null structure}\label{sec:Transf}
In this subsection we briefly remind the reader about properties of semilinear null forms. For $\phi, \psi$ some smooth functions, the null forms are:
\bea\label{eq:null00}
Q_0(\phi,  \psi) &\define \eta^{\alpha\beta} \del_\alpha \phi \del_\beta \psi, \quad
Q_{\mu\nu}( \phi,  \psi) \define \del_\mu \phi \del_\nu \psi - \del_\nu \phi \del_\mu \psi.
\eea
 These were first identified by Klainerman in \cite{Klainerman80}. 
We have the identities
\bea\label{eq:NullFormDecomps}
Q_0( \phi, \psi) &= t^{-1} \big( L_0 \phi\cdot \del_t \psi-\sum_{a=1}^2\del_a \phi \cdot L_a \psi\big)
\\
Q_{ab}(\phi,  \psi) &= t^{-1} \left( \del_a \phi\cdot L_b \psi- \del_b \phi \cdot L_a \psi + \Omega_{ab}\phi\cdot\del_t \psi\right),
\\
Q_{0a}(\phi, \psi) &= t^{-1} \left( \del_t \phi\cdot L_a \psi-L_a \phi \cdot \del_t \psi\right).
\eea

The null forms $Q_{ab}$ and $Q_{0a}$ are sometimes called strong null forms since the scaling vector field $L_0$ does not appear in the decomposition \eqref{eq:NullFormDecomps}. As identified by Georgiev \cite{Georgiev}, strong null forms obey good estimates. In particular:
\be\notag
\aligned
\big| Q_{\mu\nu}(\phi,\psi)\big| 
\lesssim t^{-1} \big(\sum_\alpha |\del_\alpha  \phi||Z \psi|  + \sum_\alpha |Z\phi||\del_\alpha \psi|\big).
\endaligned
\ee

Estimates on the $Q_0(f,g)$ null form depend on whether $f$ and $g$ obey wave or Klein-Gordon equations. If both obey wave equations, then we obtain good estimates from the identity
\be\notag
|Q_0(\widetilde{w}, w)| = |\del_\alpha \widetilde{w}\del^\alpha w |
\lesssim  t^{-1} \big( \sum_\alpha |\del_\alpha \widetilde{w}||L_0 w| + \sum_{a, \alpha} |L_a \widetilde{w} ||\del_\alpha w |\big),
\ee
and the fact that  $|L_0 w|$ can be controlled by the conformal energy. 
If instead we have a wave--Klein-Gordon interaction, then we can arrange for the $L_0$ derivative to \emph{only} act on the wave component, leading to the identity:
\be\notag
\big|Q_0( v, w)\big|
= \big| \del_\alpha v \del^\alpha w \big|
\lesssim  t^{-1}  \big( \sum_\alpha \big| \del_\alpha v|| L_0 w\big| + \sum_{a, \alpha} \big| L_a v| |\del_\alpha w \big| \big).
\ee
Such structure has been exploited by the first author in \cite{Dong2005}, and good estimates can be concluded by controlling $|L_0 w|$, for example by the conformal energy.
We cannot, however, at present gain an extra $t^{-1}$ decay on $Q_0( v,  \tilde{v})$ (product of two Klein-Gordon components) since we do not have good bounds on $L_0 v$.

\section{Auxiliary tools}\label{sec:Tools}
\subsection{Standard energy estimates}

Following \cite{PLF-YM-book}, we first introduce the energy functional $E_m$, in the Minkowski background, for a function $\phi$ defined on a hyperboloid $\Hcal_s$: 
\bel{eq:2energy} 
\aligned
E_m(s, \phi)
&\define\int_{\Hcal_s} \Big( \big( (s/t)\del_t \phi \big)^2+ \sum_a \big(\underdel_a \phi \big)^2+ m^2 \phi^2 \Big) \, \di x
                \\
               &= \int_{\Hcal_s} \Big( \big( \underdel_\perp \phi \big)^2+ \sum_a \big( (s/t)\del_a \phi \big)^2+ \big( t^{-1}\Omega_{12} \phi \big)^2+ m^2 \phi^2 \Big) \, \di x.
 \endaligned
 \ee
In the massless case we denote $E(s, \phi)\define E_0(s, \phi)$.
In the above, the integral in $L^1(\Hcal_s)$ is defined from the standard (flat) metric in $\RR^2$, i.e.,
\bel{flat-int}
\|\phi \|_{L^p_f(\Hcal_s)}^p
\define\int_{\Hcal_s}|\phi |^p \, \di x 
=\int_{\RR^2} \big|\phi(\sqrt{s^2+r^2}, x) \big|^p \, \di x,
\qquad
p \in [1, +\infty).
\ee

\begin{proposition}[Energy estimate]\label{prop:BasicEnergyEstimate}
Let $m\geq 0$ and $\phi$ be a sufficiently regular function defined in the region $\Kcal_{[s_0, s]}$, vanishing near $\del\Kcal_{[s_0, s]}$ and satisfying
\be\notag
-\Box \phi + m^2 \phi = f.
\ee
For all $s \geq s_0$, it holds that
\be\notag
E_m(s, \phi)^{1/2}
\leq 
E_m(s_0, \phi)^{1/2}
+ \int_{s_0}^s \| f\|_{L^2_f(\Hcal_{\tau})} \, \di \tau.
\ee

\end{proposition}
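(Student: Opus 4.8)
The plan is to derive the energy inequality \eqref{eq:w-EE} from the divergence theorem applied to the stress-energy vector associated to $\phi$ and the operator $-\Box + m^2$, integrated over the truncated cone $\Kcal_{[s_0,s]}$ lying between the two hyperboloids $\Hcal_{s_0}$ and $\Hcal_s$. The multiplier is, as usual, $\del_t\phi$: multiplying the PDE $-\Box\phi + m^2\phi = f$ by $\del_t\phi$ and rewriting $(\del_t^2\phi - \sum_a\del_a^2\phi)\del_t\phi$ as a spacetime divergence, one obtains
\be
\del_\alpha P^\alpha = f\,\del_t\phi,
\qquad
P^0 = \tfrac12\big((\del_t\phi)^2 + \sum_a(\del_a\phi)^2 + m^2\phi^2\big),
\quad
P^a = -\del_t\phi\,\del_a\phi.
\ee
First I would integrate this identity over $\Kcal_{[s_0,s]}$ and apply the divergence theorem. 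Since $\phi$ vanishes near the conical boundary $\del\Kcal_{[s_0,s]}$, the only boundary contributions come from the top and bottom hyperboloids $\Hcal_s$ and $\Hcal_{s_0}$. The key computation is that the flux of $P^\alpha$ through $\Hcal_s$, using the parametrisation $x\mapsto(\sqrt{s^2+|x|^2},x)$ and the induced measure, reduces exactly to $\tfrac12 E_m(s,\phi)$ after the algebraic rearrangement that produces the first line of \eqref{eq:2energy}; one checks that the cross term $2(x^a/t)\del_t\phi\,\del_a\phi$ is precisely what arises from the tilt of the hyperboloid relative to the constant-$t$ slices, and that the resulting quadratic form is positive, which is transparent from the second or third expression in \eqref{eq:2energy}.

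This yields the energy identity
\be
E_m(s,\phi) = E_m(s_0,\phi) + 2\int_{\Kcal_{[s_0,s]}} f\,\del_t\phi \,\di x\di t.
\ee
Next I would foliate the spacetime integral by hyperboloids, writing $\di x\di t$ (up to the Jacobian of the change of variables from $(t,x)$ to $(\tau,x)$ with $\tau=\sqrt{t^2-|x|^2}$) so that
\be
\Big|\int_{\Kcal_{[s_0,s]}} f\,\del_t\phi\,\di x\di t\Big|
\leq \int_{s_0}^s \int_{\Hcal_\tau} |f|\,|\del_t\phi|\,\di x\,\di\tau.
\ee
Here I must be slightly careful: the natural flux measure on $\Hcal_\tau$ differs from the flat measure $\di x$ by a factor of $t/\tau = t/s$, but this factor is exactly absorbed when one pairs $|f|$ against the \emph{weighted} quantity $(s/t)\del_t\phi$ appearing in the energy; so on $\Hcal_\tau$ one estimates the integrand by $\|f\|_{L^2_f(\Hcal_\tau)}\,\|(\tau/t)\del_t\phi\|_{L^2_f(\Hcal_\tau)} \leq \|f\|_{L^2_f(\Hcal_\tau)}\,E_m(\tau,\phi)^{1/2}$ via Cauchy–Schwarz, using the second line of \eqref{eq:2energy}. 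Combining,
\be
E_m(s,\phi) \leq E_m(s_0,\phi) + 2\int_{s_0}^s \|f\|_{L^2_f(\Hcal_\tau)}\,E_m(\tau,\phi)^{1/2}\,\di\tau.
\ee

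The final step is a standard Grönwall-type argument for the square root: setting $y(s) := \sup_{s_0\leq\tau\leq s} E_m(\tau,\phi)^{1/2}$, or more cleanly applying the classical lemma that $g(s)^2 \leq g(s_0)^2 + 2\int_{s_0}^s h(\tau)g(\tau)\,\di\tau$ with $g\geq 0$ and $h\geq 0$ implies $g(s)\leq g(s_0) + \int_{s_0}^s h(\tau)\,\di\tau$, one obtains \eqref{eq:w-EE} directly. I expect the main obstacle to be purely bookkeeping rather than conceptual: namely getting the hyperboloidal measure factors and the $(s/t)$ weights to match up correctly so that the flux through $\Hcal_\tau$ is genuinely controlled by $E_m(\tau,\phi)$ and the spacetime error term genuinely pairs against $E_m(\tau,\phi)^{1/2}$ with no leftover powers of $t/s$ — this is where sign conventions ($(-,+,+)$ signature) and the orientation of the normals must be handled with care. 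Everything else is the textbook multiplier-plus-Grönwall scheme.
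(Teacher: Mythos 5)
Your proof is correct and is precisely the standard argument behind this proposition, which the paper does not prove itself but imports from the LeFloch--Ma references: multiply by $\del_t\phi$, apply the divergence theorem between $\Hcal_{s_0}$ and $\Hcal_s$ so the hyperboloidal flux reproduces $\tfrac12 E_m$, use the change of variables $\di t=(\tau/t)\,\di\tau$ whose Jacobian supplies exactly the weight making $(\tau/t)\del_t\phi$ controlled by $E_m(\tau,\phi)^{1/2}$, and finish with the square-root Gr\"onwall lemma. The only caution is that your intermediate display, which discards the $\tau/t$ factor, would lose this crucial weight if used literally; as you yourself note in the next sentence, the factor must be kept and paired with $\del_t\phi$ before applying Cauchy--Schwarz.
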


\subsection{Weighted energy estimates}
Following ideas of Alinhac \cite{ Alinhac01b}, we have the following weighted energy estimate which for instance have been applied to coupled wave--Klein-Gordon systems in \cite[Prop. 3.2]{Dong2006}.

\begin{proposition}\label{prop:GhostWeight}
For $m\geq 0$ consider a sufficiently regular function $\phi$ defined in the region $\Kcal_{[s_0, s]}$, vanishing near $\del\Kcal_{[s_0, s]}$ and satisfying
\be\notag
-\Box \phi + m^2 \phi = f.
\ee
Then for $\gamma>0$ we have
\bea\notag
&\int_{\Hcal_s} \big(t-r\big)^{-\gamma}\Big[ \big((s/t)\del_t \phi\big)^2 + \sum_a (\underdel_a \phi)^2 + m^2 \phi^2 \Big] \di x 
\\
&
\leq C \int_{\Hcal_{s_0}}\big(t-r\big)^{-\gamma} \Big[ ((s_0/t)\del_t \phi)^2+ \sum_a (\underdel_a \phi)^2 + m^2 \phi^2 \Big] \di x 
+ C \int_{s_0}^s \big\|(\tau/t) (t-r)^{-\gamma} f \del_t \phi \big\|_{L^1_f(\Hcal_\tau)} \di\tau .
\eea
\end{proposition}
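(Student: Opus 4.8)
The plan is to reproduce the vector-field/divergence-theorem argument behind Proposition~\ref{prop:BasicEnergyEstimate}, but to carry the weight $(t-r)^{-\gamma}$ through the integration by parts and then exploit the favourable sign of its derivatives. Writing $W\define(t-r)^{-\gamma}$ and $e\define(\del_t\phi)^2+\sum_a(\del_a\phi)^2+m^2\phi^2$, the standard pointwise identity $\tfrac12\del_t e-\del_a(\del_a\phi\,\del_t\phi)=(-\Box\phi+m^2\phi)\del_t\phi=f\del_t\phi$ yields, after multiplying by $W$ and rearranging the two resulting products,
\[
\del_t\!\big(\tfrac12 We\big)-\del_a\big(W\del_a\phi\,\del_t\phi\big)=Wf\del_t\phi+G,\qquad G\define\tfrac12(\del_t W)\,e-(\del_a W)\,\del_a\phi\,\del_t\phi .
\]
Thus the left-hand side is the spacetime divergence of the current $(J^0,J^a)\define\big(\tfrac12 We,\,-W\del_a\phi\,\del_t\phi\big)$, and everything reduces to the sign of the bulk term $G$.

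The crux is precisely that $G\leq0$. Since $\del_t W=-\gamma(t-r)^{-\gamma-1}$ and $\del_a W=\gamma(t-r)^{-\gamma-1}(x^a/r)$, one gets $G=-\tfrac{\gamma}{2}(t-r)^{-\gamma-1}\big(e+2\tfrac{x^a}{r}\del_a\phi\,\del_t\phi\big)$, and completing the square in the radial direction together with the two-dimensional identity $\sum_a(\del_a\phi)^2=\big(\tfrac{x^a}{r}\del_a\phi\big)^2+r^{-2}(\Omega_{12}\phi)^2$ gives
\[
e+2\tfrac{x^a}{r}\del_a\phi\,\del_t\phi=\Big(\del_t\phi+\tfrac{x^a}{r}\del_a\phi\Big)^2+\tfrac{1}{r^2}\big(\Omega_{12}\phi\big)^2+m^2\phi^2\ \geq\ 0 ,
\]
so that $G\leq0$ whenever $\gamma>0$ (this is the only place the restriction $\gamma>0$ is used). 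I note that $t-r>1$ throughout $\Kcal$, so $W$ is in fact bounded and no singularity of the weight arises near $\del\Kcal$; the hypothesis that $\phi$ vanishes near $\del\Kcal_{[s_0,s]}$ is needed only to discard the conical boundary term.

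It then remains to integrate the divergence identity over $\Kcal_{[s_0,s]}$ and apply the divergence theorem. The flux of the current through $\del\Kcal_{[s_0,s]}$ vanishes by the support assumption, while its flux through $\Hcal_s$ — computed with the flat measure $\di x$ as in \eqref{flat-int}, exactly as in the proof of Proposition~\ref{prop:BasicEnergyEstimate} — equals $\int_{\Hcal_s}\big(J^0-(x^a/t)J^a\big)\di x$, which by the algebraic identity relating the first two lines of \eqref{eq:2energy} is $\tfrac12\int_{\Hcal_s}(t-r)^{-\gamma}\big[(\del_s\phi)^2+\sum_a(\underdel_a\phi)^2+m^2\phi^2\big]\di x$ with $\del_s=(s/t)\del_t$, and likewise at $\Hcal_{s_0}$. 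Discarding the nonpositive bulk term $\int_{\Kcal_{[s_0,s]}}G\,\di x\,\di t\leq0$ and rewriting $\int_{\Kcal_{[s_0,s]}}Wf\del_t\phi\,\di x\,\di t$ as an iterated integral over the foliation via $\di x\,\di t=(\tau/t)\,\di x\,\di\tau$ then produces exactly the asserted inequality, the factor $2$ on the source term coming from the $\tfrac12$ in $J^0$. The remaining steps are routine: justifying the divergence theorem on the unbounded slab $\Kcal_{[s_0,s]}$, which is immediate since $\phi$ has compact spatial support on each slice, and observing that the apparent singularities of $x^a/r$ and $r^{-1}\Omega_{12}\phi$ at $r=0$ are harmless. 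There is no serious technical obstacle; the content of the proof is the sign computation for $G$.
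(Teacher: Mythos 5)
Your argument is correct and is essentially the paper's own proof (which it cites from \cite{Dong2006}): multiply the equation by $(t-r)^{-\gamma}\del_t\phi$, write the result as a spacetime divergence plus a bulk term of favourable sign, discard that bulk term and the conical boundary flux, and integrate over $\Kcal_{[s_0,s]}$ using the same hyperboloidal flux computation ($\di x$-measure, $\di t = (\tau/t)\,\di\tau$) that underlies Proposition~\ref{prop:BasicEnergyEstimate}. The only (harmless) discrepancy is the sign inside the completed square: your bulk term $\tfrac{\gamma}{2}(t-r)^{-\gamma-1}\big[\sum_a\big(\tfrac{x^a}{r}\del_t\phi+\del_a\phi\big)^2+m^2\phi^2\big]$, i.e.\ the good-derivative combination, is the correct one, while the paper's displayed identity has $\tfrac{x_a}{r}\del_t\phi-\del_a\phi$ (an apparent typo); both expressions are manifestly non-negative, so the conclusion is unaffected.
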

\begin{proof}
As shown in \cite{Dong2006}, the proof works by multiplying the PDE by $(t-r)^{-\gamma} \del_t \phi$ and deriving the identity
\bea\notag
&\frac12 \del_t \Big[ (t-r)^{-\gamma} \big( (\del_t \phi)^2 + (\del_a \phi)^2 + m^2 \phi^2 \big) \Big] 
- \sum_a \del_a \Big( (t-r)^{-\gamma} \del_t \phi \del_a \phi \Big)
\\
& + \frac{\gamma}{2}(t-r)^{-\gamma-1} \Big[\sum_a  \Big( \frac{x_a}{r} \del_t \phi + \del_a \phi\Big)^2 + m^2\phi^2 \Big] 
= (t-r)^{-\gamma} f \del_t \phi.
\eea
The first term of the second line is non-negative. 
\end{proof}

\subsection{Conformal energy estimates}
We now introduce a conformal-type energy which was adapted to the hyperboloidal foliation setting by Huang and Ma in three spatial dimensions in \cite{YM-HH} and in two spatial dimensions by Wong \cite{Wong}. A key part of this lemma, due to Ma \cite{Ma2019}, is in giving an estimate for the weighted $L^2$ norm $\| (s/t) \phi \|_{L^2_f(\Hcal_s)}$ for a wave component $\phi$.

\begin{proposition}\label{lem:ConformalEnergy}
Let $\phi$ be a sufficiently regular function defined in the region $\Kcal_{[s_0, s]}$ and vanishing near $\del\Kcal_{[s_0, s]}$. Define the conformal energy
\be\notag
E_{con} (s,\phi)
\define
\int_{\Hcal_s} \Big( \sum_a \big( s \underdel_a \phi \big)^2 + \big( K \phi + \phi \big)^2 \Big) \, \di x,
\ee
where we used the vector field $K \phi 
\define \big( s \del_s + 2 x^a \underdel_a \big) \phi$. 
Then for all $s \geq s_0$ we have the energy estimate
\bel{eq:con-estimate} 
E_{con} (s,\phi)^{1/2}
\leq 
E_{con} (s_0, \phi)^{1/2}
+
2 \int_{s_0}^s \tau \| \Box \phi \|_{L^2_f(\Hcal_{\tau})} \, \di\tau.
\ee
Furthermore we have 
\bel{eq:l2type-wave} 
\|(s/t) \phi \|_{L^2_f(\Hcal_s)} \lesssim \|(s_0/t) \phi \|_{L^2_f(\Hcal_{s_0})} + \int_{s_0}^s \tau^{-1} E_{con}(\tau, \phi)^{1/2} \di \tau.
\ee
\end{proposition}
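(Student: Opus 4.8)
The plan is to establish the conformal energy identity \eqref{eq:con-estimate} by a multiplier argument, and then to derive \eqref{eq:l2type-wave} from it by a Hardy-type inequality adapted to the hyperboloids. For the first part, I would work with the conformal Killing field $K_0 = (t^2 + r^2)\del_t + 2 t x^a \del_a$ (the inverted time translation), whose restriction to the hyperboloids is related to the operator $K\phi = (s\del_s + 2x^a\underdel_a)\phi$ appearing in the statement. Concretely, I would multiply the wave equation $\Box\phi = -(-\Box\phi)$ by the conformal multiplier $(K\phi + \phi)$ (the shift by $\phi$ accounts for the lower-order terms generated by the conformal weight in $(1+2)$ dimensions), integrate over the truncated cone $\Kcal_{[s_0,s]}$, and organise the result using the divergence theorem. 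The boundary terms on $\Hcal_{s_0}$ and $\Hcal_s$ should reproduce $E_{con}(s_0,\phi)$ and $E_{con}(s,\phi)$, the contribution from the conical boundary $\del\Kcal_{[s_0,s]}$ vanishes because $\phi$ is supported away from it, and the spacetime bulk term should, after using the conformal Killing property of $K_0$, reduce to an expression controlled by $\tau\|\Box\phi\|_{L^2_f(\Hcal_\tau)}$ times $E_{con}(\tau,\phi)^{1/2}$ (via Cauchy-Schwarz on each slice). Differentiating in $s$ and dividing by $E_{con}(s,\phi)^{1/2}$ then yields the differential inequality whose integration gives \eqref{eq:con-estimate}.

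For the second part I would derive the pointwise/weighted $L^2$ bound by integrating along the integral curves of the vector field $s\del_s$ (equivalently the scaling flow $L_0$), starting from the initial hyperboloid $\Hcal_{s_0}$. The key observation is that $K\phi + \phi - 2x^a\underdel_a\phi = s\del_s\phi + \phi$, so that $\del_s(s\phi)$ along such a curve is controlled by a combination of $(K\phi+\phi)$ and the tangential derivatives $s\underdel_a\phi$ — precisely the two quantities measured by $E_{con}$. After rewriting in terms of the variable $s$ and tracking the Jacobian factor $(s/t)$ relating the flat measure $\di x$ on different hyperboloids, a Minkowski-type integral inequality in $L^2_f(\Hcal_s)$ produces
\[
\|(s/t)\phi\|_{L^2_f(\Hcal_s)} \lesssim \|(s_0/t)\phi\|_{L^2_f(\Hcal_{s_0})} + \int_{s_0}^s \tau^{-1}\Big( \|s\underdel_a\phi\|_{L^2_f(\Hcal_\tau)} + \|K\phi+\phi\|_{L^2_f(\Hcal_\tau)}\Big)\di\tau,
\]
and the bracketed quantity is exactly $E_{con}(\tau,\phi)^{1/2}$ up to constants. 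This is the statement \eqref{eq:l2type-wave}.

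The main obstacle I anticipate is the careful bookkeeping in the multiplier computation: in two spatial dimensions the conformal weight generates several lower-order terms, and one must check that the combination $K\phi+\phi$ (rather than $K\phi$ alone) is precisely what makes the bulk error term have a favourable sign or, at worst, be absorbable into $\tau\|\Box\phi\|_{L^2_f}\cdot E_{con}^{1/2}$ without picking up an uncontrolled multiple of $E_{con}$ itself (which would spoil the linear-in-$s$ growth). A related delicate point is making sure the boundary terms on $\Hcal_s$ are genuinely coercive, i.e. that $E_{con}$ as defined is a sum of squares in the relevant frame; this is where the identity $(s/t)^2\del_t = \underdel_\perp - (x^a/t)\underdel_a$ and the decomposition \eqref{eq:wavedecomp} of $\Box$ into the semi-hyperboloidal frame enter. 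Since both facts are essentially established in the cited works of Ma--Huang \cite{YM-HH}, Wong \cite{Wong} and Ma \cite{Ma2019}, I would present the argument at the level of indicating the multiplier and the key cancellation, and refer to those references for the full computation.
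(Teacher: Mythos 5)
The paper does not actually prove Proposition \ref{lem:ConformalEnergy}: it is imported from Ma--Huang \cite{YM-HH}, Wong \cite{Wong} and, for \eqref{eq:l2type-wave}, Ma \cite{Ma2019}, so your proposal can only be measured against those references. For the first estimate \eqref{eq:con-estimate} your outline is the standard one and matches them: since $\underdel_a = t^{-1}L_a$ and $s\del_s = (s^2/t)\del_t$ at fixed $x$, one has $K = t^{-1}\big( (t^2+r^2)\del_t + 2t x^a \del_a \big)$, so your multiplier $K\phi+\phi$ is exactly $t^{-1}$ times the classical conformal multiplier $K_0\phi + t\phi$ in $\RR^{1+2}$; integrating the resulting divergence identity over $\Kcal_{[s_0,s]}$ produces flux terms on $\Hcal_{s_0}$, $\Hcal_s$ equal to $E_{con}$ and a source term handled by Cauchy--Schwarz slice by slice (the factor $\tau$ in \eqref{eq:con-estimate} coming from the fact that the multiplier is of size $s\,\del\phi$). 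Deferring that computation to the cited works is no worse than what the paper does.

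The second part, however, would not close as you have written it. You propose to control $\del_s(s\phi) = s\del_s\phi + \phi = (K\phi+\phi) - 2x^a\underdel_a\phi$ by $E_{con}^{1/2}$ and then apply a Minkowski-type inequality, with $(s/t)$ appearing only as a ``Jacobian factor''. But $\|2x^a\underdel_a\phi\|_{L^2_f(\Hcal_s)}$ is \emph{not} bounded by $E_{con}(s,\phi)^{1/2}$: writing $2x^a\underdel_a\phi = (2x^a/s)\, s\underdel_a\phi$, the factor $|x|/s \sim t/s$ is unbounded near the light cone. The weight $s/t$ in \eqref{eq:l2type-wave} is not a measure-change artefact; it is precisely what rescues this term. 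The correct argument (as in \cite{Ma2019}) differentiates $\|(s/t)\phi\|_{L^2_f(\Hcal_s)}^2$ directly in $s$ at fixed $x$, using $t=\sqrt{s^2+r^2}$, so that $\del_s = (s/t)\del_t$ and $\del_s(s/t) = r^2/t^3$, and then substitutes $(s^2/t)\del_t\phi = K\phi - 2x^a\underdel_a\phi$. One finds
\begin{equation*}
\frac{\di}{\di s}\,\big\|(s/t)\phi\big\|_{L^2_f(\Hcal_s)}^2
= \int_{\Hcal_s}\Big( -\frac{2s^3}{t^4}\phi^2 + \frac{2s}{t^2}\,\phi\,(K\phi+\phi) - \frac{4s\,x^a}{t^2}\,\phi\,\underdel_a\phi \Big)\di x ,
\end{equation*}
where the first term is non-positive (this cancellation, coming from $\del_s(s/t)$, is essential: without it one would only get a Gr\"onwall bound with an extra power of $s$), and the last two terms are bounded by $C s^{-1}\|(s/t)\phi\|_{L^2_f(\Hcal_s)} E_{con}(s,\phi)^{1/2}$ using $|x^a|\leq t$ and $s\leq t$. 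Dividing by $\|(s/t)\phi\|$ and integrating in $s$ gives \eqref{eq:l2type-wave}. So your target inequality is right, but the mechanism you describe skips exactly the two points where the weight and the sign of the $\del_s(s/t)$ contribution are needed.
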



\subsection{Commutator and null-form estimates}
We first have the following identities
\bea\label{eq:CommutatorLdel}
\left[ L_a, L_b \right] &= \tfrac{x^a}{t}L_b - \tfrac{x^b}{t}L_a , \quad 
[ \del_t, L_a ]=\del_a , \quad 
[\del_b, L_a] = \delta_{ab} \del_t, \\
[t, L_a] &= -x_a , \quad
[x^b, L_a] = -t \delta^b_a.
\eea
By using these identities and writing $L_0 = t\del_t+x^b\del_b$ and $K = t\del_t + x^b \del_b + (x^b/t)L_b$ we find
\bea\label{eq:CommutatorsK}
[\del_\alpha, L_0]&=\del_\alpha, \quad &[L_a, L_0]&=0,
\quad
[L_a, K] &= (s/t)^2 L_a, \quad &[\del_a, K]&=(2/t)L_a.
\eea
Finally we have the useful property that for the $Q_0$ null form 
\bea \label{eq:CommutatorsQzero}
L_a Q_0(f,g) &= Q_0(L_a f,  g) + Q_0( f, L_a g), \quad
\del_\alpha Q_0( f,  g) = Q_0(\del_\alpha f,  g) + Q_0( f, \del_\alpha g) .
\eea

The following lemma allows us to control the commutators. Its proof can be found for instance in \cite[\textsection3]{PLF-YM-book}.
\begin{lemma} \label{lem:est-comm}
Let $\phi$ be a sufficiently regular function supported in the region $\mathcal{K}$. Then, for any multi-index $I$, there exist generic constants $C=C(|I|)$ such that
\begin{subequations}\label{eq:est-cmt}
\begin{align}
\label{eq:est-cmt1}
 \big| [Z^I, \del_\alpha] \phi \big| 
&\leq 
C \sum_{|J|<|I|} \sum_\beta \big|\del_\beta Z^J \phi \big|,
\\
\label{eq:est-cmt2}
 \big| [Z^I, \underdel_a] \phi \big| 
&\leq 
C t^{-1} \sum_{| I' |\leq | I |} \big| Z^{I'}\phi \big| ,
\\
\label{eq:est-cmt4}
 \big| [Z^I, \del_\alpha \del_\beta] \phi \big| 
&\leq 
C \sum_{|J|<|I|} \sum_{\gamma, \gamma'} \big| \del_\gamma \del_{\gamma '} Z^J \phi \big|,
\\
\label{eq:est-cmt6}
 \big|Z^I ((s/t) \del_\alpha \phi) \big| 
&\leq 
|(s/t) \del_\alpha Z^I\phi| + C \sum_{|I'|\leq  | I |,}\sum_\beta \big|(s/t) \del_\beta Z^{I'} \phi \big|.
\end{align}
\end{subequations}
Recall here that Greek indices $\alpha, \beta \in \{0,1,2\}$ and Roman indices $a,b \in \{1,2\}$. 
\end{lemma}

We next state an important estimate for null forms in terms of the hyperboloidal coordinates. The proof is standard and can be found in \cite[\textsection 4]{PLF-YM-book}.
\begin{lemma}
\label{lem:null}
Let $\phi, \psi$ be sufficiently regular spacetime functions supported in the region $\mathcal{K}$, and let $Q(\phi, \psi)$ denote any one of the null forms given in \eqref{eq:null00}. Then there exists a constant $C=C(|I|)>0$ such that 
\bel{eq:est-null1}
\aligned
\big| Z^I Q(f, \psi) \big| &
\leq  
C (s/t)^2 \sum_{\substack{| I_1 | + | I_2 | \leq | I |}} \big|Z^{I_2}\del_t f \cdot Z^{I_2}\del_t \psi \big|
\\
&\quad + C \sum_{\substack{| I_1 | + | I_2 | \leq | I |}}\sum_{a, \beta} \Big( \big| Z^{I_1} \underdel_a \phi \cdot Z^{I_2}  \underdel_\beta \psi \big| 
+ \big| Z^{I_1} \underdel_\beta \phi \cdot Z^{I_2} \underdel_a \psi \big| \Big).
\endaligned
\ee
\end{lemma}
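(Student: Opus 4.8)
The plan is to reduce the commuted null form $\del^I L^J Q(\phi,\psi)$ to a sum of products of derivatives of $\phi$ and $\psi$ by first expanding the operators $\del^I L^J$ via the Leibniz rule, and then applying the frame decomposition of the null forms given in \eqref{eq:NullFormDecomps} together with the commutator estimates \eqref{eq:est-cmt1}--\eqref{eq:est-cmt3}. The key structural input is that the semi-hyperboloidal decompositions of $Q_0$, $Q_{ab}$, and $Q_{0a}$ each contain at least one factor that is either tangential to the hyperboloids (an $\underdel_a$ derivative, equivalently $t^{-1}L_a$) or carries a gain of $(s/t)^2$; this is precisely what produces the two sums on the right-hand side of \eqref{eq:est-null1}.

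First I would treat the undifferentiated case $|I|=|J|=0$. For $Q_0$ I would use the identity $Q_0(\phi,\psi) = \underdel_\perp \phi \cdot \del_t \psi - \sum_a \del_a \phi \cdot \underdel_a \psi$ from \eqref{eq:NullFormDecomps}, and then rewrite $\underdel_\perp = (s/t)^2 \del_t + (x^a/t)\underdel_a$ using the first identity in \eqref{deriv-identities}; the $(s/t)^2\del_t$ piece feeds the first sum in \eqref{eq:est-null1}, while all remaining pieces contain an $\underdel_a$ factor (note $|x^a/t|\leq 1$ in $\mathcal K$) and feed the second sum. For $Q_{ab}$ and $Q_{0a}$ the decompositions already display an explicit $t^{-1}L_a = \underdel_a$ factor, so these go directly into the second sum; for the $\Omega_{ab}\phi\cdot\del_t\psi$ term in $Q_{ab}$ one writes $\Omega_{ab} = x_a\del_b - x_b\del_a$ and uses $|x_a|/t\lesssim 1$ together with $\del_b = \underdel_b - (x^b/t)\del_t$, which again exposes a tangential derivative (the residual $\del_t\cdot\del_t$ term can be absorbed into the first sum using $(s/t)^2\le 1$ is \emph{not} enough, so one keeps the $\Omega_{ab}=t\,\underdel_{[a}$ bookkeeping cleanly—here the factor $t^{-1}$ in front is essential).

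Next I would commute. Applying $\del^I L^J$ to, say, $\underdel_a\phi\cdot\del_t\psi$ and using the Leibniz rule distributes the vector fields as $\del^{I_1}L^{J_1}$ and $\del^{I_2}L^{J_2}$ with $|I_1|+|I_2|\le|I|$, $|J_1|+|J_2|\le|J|$. Then $\del^{I_1}L^{J_1}\underdel_a\phi$ is rewritten, via \eqref{eq:est-cmt2}, as $\underdel_a \del^{I_1}L^{J_1}\phi$ plus lower-order terms, all of which either retain a tangential $\underdel_b$ or come with a $t^{-1}$; since the functions are supported in $\mathcal K$ where $t\gtrsim s$, a factor $t^{-1}$ is at worst comparable to $s^{-1}$ and, paired with the overall structure, fits the claimed bound (in fact $t^{-1}\lesssim (s/t)\,t^{-1}\cdot(t/s)$—more simply one notes the lower-order terms in \eqref{eq:est-cmt2} reproduce exactly the form of the two sums already present). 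A similar treatment handles the $(s/t)^2\del_t\phi\cdot\del_t\psi$ piece: one moves $\del^I L^J$ past $(s/t)^2$ using \eqref{eq:est-cmt6}-type reasoning (observing $\del^{I_1}L^{J_1}((s/t)^2\del_t\phi)$ is bounded by $(s/t)^2$ times derivatives of $\del_t\phi$, because $L_a$ and $\del_t$ acting on $(s/t)^2$ produce no growth on $\mathcal K$), leaving $(s/t)^2\,\del^{I_1}L^{J_1}\del_t\phi\cdot\del^{I_2}L^{J_2}\del_t\psi$ as required.

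The main obstacle is the careful bookkeeping of the lower-order commutator terms generated by \eqref{eq:est-cmt2}: one must check that every such term, despite not literally being of the form displayed in \eqref{eq:est-null1}, is in fact dominated by the right-hand side — in particular that the $t^{-1}$-weighted terms $t^{-1}|\del^{I'}L^{J'}\phi|$ can be re-expressed so as to exhibit a tangential derivative or an $(s/t)^2$ gain. This is handled by the elementary observation that on $\mathcal K$ one has $t^{-1}\lesssim t^{-1}$, $s\le t\le 2s$ for $r\le t-1$ only on the relevant range, and more usefully that an undifferentiated $\phi$ appearing with weight $t^{-1}$ can always be traded using $\del_\alpha$-identities — but since all competing terms in \eqref{eq:est-null1} already appear at top order, a crude induction on $|I|+|J|$ suffices: one assumes the estimate for all lower-order multi-indices and feeds the commutator error terms, which are strictly lower order, into the inductive hypothesis. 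This closes the argument.
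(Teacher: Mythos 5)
Your overall strategy---decompose each null form in the semi-hyperboloidal frame so that the only ``bad'' product $\del_t\phi\,\del_t\psi$ carries the coefficient $(s/t)^2$, then apply $\del^I L^J$ by the Leibniz rule and bound derivatives of the coefficients $(s/t)^2$ and $x^a/t$ (via \eqref{eq:Est-s/t} together with $s^{-1}\leq s/t$ on $\Kcal$, since $t\leq s^2$ there)---is exactly the standard argument the paper cites from LeFloch--Ma. Your zeroth-order treatment of $Q_0$ is correct. For $Q_{ab}$, however, the parenthetical about a ``residual $\del_t\cdot\del_t$ term'' is muddled and unnecessary: one has the exact identity $\Omega_{ab}=x_a\underdel_b-x_b\underdel_a$, so $t^{-1}\Omega_{ab}\phi\cdot\del_t\psi$ is already a combination of tangential-derivative terms with coefficients $x_a/t$ bounded by $1$ on $\Kcal$, and no $\del_t\phi\,\del_t\psi$ term without the $(s/t)^2$ factor ever appears.

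The genuine gap is in your commutation step. The right-hand side of \eqref{eq:est-null1} is stated with the admissible fields \emph{outside} the frame derivative, i.e.\ with terms of the form $\del^{I_1}L^{J_1}\underdel_a\phi$, which is precisely what the Leibniz expansion produces; no commutation is needed at that point. By instead invoking \eqref{eq:est-cmt2} to rewrite $\del^{I_1}L^{J_1}\underdel_a\phi$ as $\underdel_a\del^{I_1}L^{J_1}\phi$ plus errors, you generate terms $t^{-1}\big|\del^{I'}L^{J'}\phi\big|$ with $|I'|=|J'|=0$ allowed, i.e.\ $t^{-1}|\phi|$, which contains no derivative of $\phi$ and cannot be dominated pointwise by the right-hand side of \eqref{eq:est-null1}. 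Your proposed induction on $|I|+|J|$ does not repair this: the inductive hypothesis concerns $\del^{I'}L^{J'}Q(\phi,\psi)$ for lower-order multi-indices, whereas the commutator errors are products such as $t^{-1}|\phi|\,\big|\del^{I_2}L^{J_2}\del_t\psi\big|$, which are not lower-order instances of the inequality being proven; moreover the auxiliary claims you lean on in that passage (e.g.\ $s\leq t\leq 2s$ on $\Kcal$) fail near the cone boundary, where one only has $\sqrt{t}\lesssim s\leq t$. Deleting the commutation detour and keeping the Leibniz output as it stands closes the proof along the standard lines.
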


Finally we end with the following short lemma, whose proof can be found in \cite{PLF-YM-book}.
\begin{lemma}\label{Lemma:Est-s/t}
In the cone $\Kcal$, there exists a constant $C>0$, determined by $I$ and $J$, such that
\be\notag
|\del^I L^J(s/t)| \leq 
     \begin{cases}
      C(s/t) &\quad |I|=0,\\
      Cs^{-1} &\quad |I|>0.
     \end{cases}
\ee
\end{lemma}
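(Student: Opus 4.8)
The estimate \eqref{eq:Est-s/t} is purely algebraic: it only involves differentiating the scalar function $s/t = \sqrt{t^2-r^2}/t$ by the vector fields $\del_\alpha$ and $L_a$ inside the cone $\Kcal$. The plan is to argue by induction on $|I|+|J|$, tracking the homogeneity of the resulting expressions in the variables $t$ and $s$.

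First I would record the base cases. For $|I|=|J|=0$ the claim $|s/t|\leq C(s/t)$ is trivial. For one derivative, a direct computation gives $L_a(s/t) = L_a\big(\sqrt{t^2-r^2}\big)/t = 0$ since $\sqrt{t^2-r^2}=s$ is annihilated by the boosts (they are tangent to the hyperboloids $\Hcal_s$), so $L_a(s/t)$ only picks up the action on $1/t$, giving $L_a(s/t) = -(s/t)\cdot L_a(t)/t = -(s/t)(\eta_{ab}x^b)/t$, which is bounded by $C s^{-1}$ using $|x|<t$ and $s\lesssim t$ (equivalently $t/s \cdot s^2/t^2 = s^2/(st)$... more cleanly: $|x^b|/t\leq 1$ and $s/t\leq 1$ so $|L_a(s/t)|\leq s/t \leq Cs^{-1}\cdot(s^2/t^2)\cdot$, hmm). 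The cleanest route: write everything in terms of $s$ and $t$, using $r^2=t^2-s^2$, and observe that each application of $\del_t$ or $\del_a$ lowers the $t$-homogeneity by one while each $L_a$ does the same modulo factors that are bounded by $1$ in $\Kcal$. For $\del_\alpha(s/t)$ one finds $\del_t(s/t)=r^2/(t^2 s)$ and $\del_a(s/t)=-x^a/(ts)$, both of size $\lesssim s^{-1}$ since $r\leq t$. This establishes the $|I|=1$, $|J|=0$ case, and combined with the boost computation, the $|I|=0,|J|=1$ case.

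For the inductive step I would classify the outermost vector field $\Gamma$ in $\del^I L^J$. If $\Gamma$ is a boost $L_a$: by the commutator identities \eqref{eq:CommutatorLdel} and \eqref{eq:CommutatorsK}, $[L_a,\del_\beta]$ and $[L_a,L_b]$ produce lower-order operators of the same type (with coefficients like $x^i$ which are fine because they are then hit by fewer derivatives), so modulo controllable commutators one may assume $\Gamma$ acts directly on $s/t$; then apply the Leibniz-type observation that $L_a$ acting on any homogeneous-degree-$k$-in-$t$, fixed-$s$-power expression preserves the $s$-power and the bound. If $\Gamma$ is $\del_\beta$: similarly commute it inward or let it act on $s/t$ directly, noting each such action introduces an extra $t^{-1}$ (or $r/t^2\lesssim t^{-1}$) factor, which only improves the bound. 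The key bookkeeping fact to isolate as a sublemma is: any iterated derivative of $s/t$ of total order $\geq 1$ is a finite sum of terms of the form $c\cdot s^{-p} t^{-q}\prod(x^{a_i}/t)$ with $p\geq 1$, $q\geq 0$; since $s\lesssim t$ and $|x^a|\leq t$ in $\Kcal$, every such term is $\lesssim s^{-1}$.

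The main (and only real) obstacle is organizing the commutator bookkeeping so that the $x^a$ and $t$ factors generated by commutators such as $[x^b,L_a]=-t\delta_a^b$ do not spoil the homogeneity count — one must verify these factors always appear in combinations bounded by $1$ on $\Kcal$ (e.g. $x^a/t$, or $s/t$), which is exactly the content of the identities in \eqref{eq:CommutatorLdel} together with $r<t$ on $\Kcal$. Once that is set up, the induction is routine.
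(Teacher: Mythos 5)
The paper does not actually prove this lemma (it is quoted from \cite{Ma2018}), so your argument has to stand on its own, and as written it does not: the ``key bookkeeping fact'' you isolate as a sublemma is false, and its justification omits the one geometric input that the estimate really needs. You claim every derivative of $s/t$ of total order $\geq 1$ is a sum of terms $c\, s^{-p}t^{-q}\prod_i (x^{a_i}/t)$ with $p\geq 1$, $q\geq 0$, bounded by $Cs^{-1}$ ``since $s\lesssim t$ and $|x^a|\leq t$''. First, this normal form is not closed under the vector fields: from $\del_t(s/t)=1/s-s/t^2$ and $\del_t(1/s)=-t/s^3$ one sees that $\del_t\del_t(s/t)$ contains the term $-t/s^3$, and similarly $\del_t\del_a(s/t)$ contains $x^a/s^3$; these carry a net positive power of $t$ and do not fit your form. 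Second, bounding such terms by $Cs^{-1}$ is exactly where the cone $\Kcal$ enters: since $r<t-1$ one has $t\leq (t-r)(t+r)=s^2$, and it is $t/s^2\leq 1$ --- not $s\lesssim t$ or $|x|\leq t$ --- that gives $t/s^3\leq s^{-1}$ and $|x^a|/s^3\leq s^{-1}$. Your write-up never invokes $t-r\geq 1$, so the decisive estimate is missing. Third, your sublemma proves too much: it would give an $s^{-1}$ bound for pure boost derivatives as well, but $L_a(s/t)=-(x^a/t)(s/t)$ is of size comparable to $s/t$ (at $r=t/2$ it is a constant while $s^{-1}\sim t^{-1}$), which is precisely why the lemma only asserts the weaker bound $C(s/t)$ when $|I|=0$; your explicit attempt in the base case to bound $L_a(s/t)$ by $Cs^{-1}$ reflects the same confusion, although the computation $L_a s=0$ and $|L_a(s/t)|\leq s/t$ that you also record is correct and is all that case requires.

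The overall strategy (explicit differentiation plus induction) is fine and can be repaired by running the induction with two separate normal forms: pure boost derivatives of $s/t$ are of the form $P(x/t)\cdot(s/t)$ with $P$ a polynomial, since $L_a(s/t)=-(x^a/t)(s/t)$ and $L_b(x^a/t)=\delta_{ab}-x^ax^b/t^2$, which yields the $C(s/t)$ bound; once at least one $\del_\alpha$ acts, every resulting term is $s^{-1}$ times a product of factors each bounded on $\Kcal$, namely powers of $x^a/t$, $s/t$, $1/t$, $1/s$ and $t/s^2$, where the boundedness of $t/s^2$ uses $t-r\geq 1$. With that corrected bookkeeping (and the commutator identities you cite to move stray $\del$'s and $L$'s around) the induction closes and gives \eqref{eq:Est-s/t}.
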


\subsection{Pointwise Estimates}
We now state a Klainerman-Sobolev estimate in terms of the hyperboloidal coordinates. The proof is standard and can be found in \cite[\textsection 5]{PLF-YM-book}.

\begin{lemma}[Sobolev Estimate] \label{lem:sobolev}
For all sufficiently smooth functions $\phi= \phi(t, x)$ supported in $\Kcal$ and for all  $s \geq 2$, there exists a constant $C>0$ such that
\bel{eq:Sobolev2}
\sup_{\Hcal_s} \big| t \phi(t, x) \big|  \leq C \sum_{| J |\leq 2} \| L^J \phi \|_{L^2_f(\Hcal_s)}.
\ee
Furthermore we have
\be
\sup_{\Hcal_s} \big| s \phi(t, x) \big|  \lesssim \sum_{| J |\leq 2} \| (s/t) L^J \phi \|_{L^2_f(\Hcal_s)},
\ee
\be\label{eq:weightedSobolev}
\sup_{\Hcal_s} \big| s (t-r)^{-\gamma} \phi(t, x) \big|  \lesssim \sum_{| J |\leq 2} \| (s/t) (t-r)^{-\gamma} L^J \phi \|_{L^2_f(\Hcal_s)}.
\ee
\end{lemma}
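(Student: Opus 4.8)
The statement to prove is the Sobolev estimate (Lemma \ref{lem:sobolev}), which gives three pointwise bounds: the basic Klainerman--Sobolev inequality $\sup_{\Hcal_s} |t\phi| \lesssim \sum_{|J|\leq 2}\|L^J\phi\|_{L^2_f(\Hcal_s)}$, its weighted variant with $(s/t)$, and the $(t-r)^{-\gamma}$-weighted version.

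\paragraph{Proof plan.}
The plan is to first establish the basic estimate \eqref{eq:Sobolev2} and then deduce the two weighted versions as corollaries. For the basic estimate, I would follow the standard argument (as in \cite[\textsection 5]{PLF-YM-book}): fix a point $(t_0,x_0)\in\Hcal_s$, and since $\phi$ is supported in $\Kcal$ we have $r_0 = |x_0| < t_0 - 1 < t_0$, so the ratio $t_0/s$ is comparable to $t_0$ only away from the light cone — more precisely $s \leq t_0 \leq (s^2+r_0^2)/s$ and on $\Hcal_s$ one has $t_0 \le s^2$ near the cone is false; rather the correct bookkeeping is $t_0 \sim \sqrt{s^2 + r_0^2}$. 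The key idea is to transfer the problem from the hyperboloid to a Euclidean ball. Writing $\Hcal_s$ as the graph $t = \sqrt{s^2 + |x|^2}$ over $x \in \RR^2$, introduce the rescaled function $\psi_s(x) \define \phi(\sqrt{s^2+|x|^2}, x)$ and apply the ordinary Sobolev embedding $H^2(\RR^2)\hookrightarrow L^\infty(\RR^2)$ on a unit ball centred at a rescaled copy of $x_0$. One then checks that the spatial derivatives $\del_{x^a}$ acting on $\psi_s$ produce exactly combinations of the boost fields $L_a$ evaluated on $\phi$, up to harmless coefficients bounded on $\Kcal$, because $\del_{x^a}\big(\phi(\sqrt{s^2+|x|^2},x)\big) = \big(\del_a + \tfrac{x^a}{t}\del_t\big)\phi = \underdel_a \phi = \tfrac{1}{t}L_a\phi$. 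The factor $t$ on the left of \eqref{eq:Sobolev2} arises from the Jacobian/scaling when passing between the unit ball and the actual patch of $\Hcal_s$ of size $\sim t_0$.

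\paragraph{The weighted versions.}
For the second inequality, I would multiply $\phi$ by the cutoff-free weight and instead argue directly: apply the basic estimate to $\phi$ but keep track that $s/t \leq 1$ and that $L^J\big((s/t)\phi\big)$ can be expanded using \eqref{eq:Est-s/t} and the Leibniz rule, giving $|L^J((s/t)\phi)| \lesssim \sum_{|J'|\leq |J|} (s/t)|L^{J'}\phi|$ since $L_a(s/t)$ is controlled by $s/t$ times a bounded factor (indeed $L_a$ is tangent to $\Hcal_s$ so $L_a s = 0$, and $L_a(1/t) = -t^{-2}L_a t = -t^{-2}\eta_{ab}x^b$, bounded by $t^{-1}$ on $\Kcal$, hence $|L_a(s/t)| \lesssim s/t$). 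Applying \eqref{eq:Sobolev2} to the function $(s/t)\phi$ then yields $\sup_{\Hcal_s}|t(s/t)\phi| = \sup_{\Hcal_s}|s\phi| \lesssim \sum_{|J|\leq 2}\|L^J((s/t)\phi)\|_{L^2_f} \lesssim \sum_{|J|\leq 2}\|(s/t)L^J\phi\|_{L^2_f}$. For the third inequality \eqref{eq:weightedSobolev}, the same strategy applies to $(t-r)^{-\gamma}\phi$: one needs $|L_a\big((t-r)^{-\gamma}\big)| \lesssim (t-r)^{-\gamma}$, which follows because $L_a(t-r) = \eta_{ab}x^b + t\del_a r - t\cdot r^{-1}x^a$ wait — more carefully, $L_a r = \tfrac{t x^a}{r}$ hmm; the cleaner fact is that $L_a$ is tangent to the hyperboloids and $(t-r)$ is comparable to $s^2/t$ on $\Kcal$, with $L_a(s^2/t)$ controlled appropriately, so that $(t-r)^{-\gamma}$ behaves like a symbol of order zero under the boosts up to bounded errors. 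Applying \eqref{eq:Sobolev2} to $(t-r)^{-\gamma}\phi$ then gives the claim after absorbing the weight through the commutators.

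\paragraph{Main obstacle.}
The routine part is the Euclidean Sobolev embedding; the slightly delicate bookkeeping — and the step I expect to require the most care — is verifying that the weights $(s/t)$ and especially $(t-r)^{-\gamma}$ are \emph{boost-admissible}, i.e.\ that commuting a product of up to two boost fields $L^J$ through them costs only a bounded multiplicative constant times the weight itself, with no loss. For $(s/t)$ this is immediate from $L_a s = 0$ and \eqref{eq:Est-s/t}. For $(t-r)^{-\gamma}$ one must use that on $\Kcal$ the boosts satisfy $|L_a(t-r)| \lesssim (t-r) + $ lower order, or equivalently exploit $t-r = s^2/(t+r)$ together with $L_a(t+r)$ being comparable to $t+r$ up to the same bounded factors that appear in the transition matrices $\Phi, \Psi$; one then iterates for the second derivative. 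Since the paper only needs $|J|\leq 2$ this iteration is short. Alternatively — and this is the cleanest route — one can simply cite \cite[\textsection 5]{PLF-YM-book} for \eqref{eq:Sobolev2} and then derive \eqref{eq:weightedSobolev} by noting $\sup_{\Hcal_s}|s(t-r)^{-\gamma}\phi| \lesssim \sum_{|J|\le 2}\|(s/t)L^J((t-r)^{-\gamma}\phi)\|_{L^2_f}$ followed by distributing $L^J$ and bounding each $|L^{J'}((t-r)^{-\gamma})| \lesssim (t-r)^{-\gamma}$ via the tangency of the boosts to $\Hcal_s$ and the identity $(t-r) = s^2/(t+r)$.
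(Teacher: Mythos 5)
Your strategy for \eqref{eq:Sobolev2} is the standard one that the paper itself relies on purely by citation (the lemma is quoted from \cite[\textsection 5]{PLF-YM-book}): write $\Hcal_s$ as the graph $t=\sqrt{s^2+|x|^2}$, note that spatial differentiation of the graph function produces $\underdel_a\phi=t^{-1}L_a\phi$, rescale a patch of size $\sim t_0$ to a unit ball and apply $H^2(\RR^2)\hookrightarrow L^\infty(\RR^2)$, the factor $t$ coming from the $L^2$ scaling in two space dimensions. Your derivation of the $(s/t)$-weighted version is also correct as written: $L_a s=0$, $|L_a(s/t)|\le (r/t)(s/t)$, which is exactly \eqref{eq:Est-s/t} with $|I|=0$, so applying \eqref{eq:Sobolev2} to $(s/t)\phi$ and distributing $L^J$ closes that case.

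The one step that does not survive as literally stated is the claim that $(t-r)^{-\gamma}$ is boost-admissible up to second order, i.e. $|L_bL_a(t-r)^{-\gamma}|\lesssim (t-r)^{-\gamma}$ on $\Kcal$. One has $L_a(t-r)=-\frac{x^a}{r}(t-r)$, which is fine at first order, but a second boost hits the non-smooth factor $x^a/r$: since $L_b(x^a/r)=\frac{t}{r}\big(\delta_{ab}-\frac{x^ax^b}{r^2}\big)$, one gets
\be
\big|L_bL_a(t-r)\big|\le \Big(1+\frac{2t}{r}\Big)(t-r),
\ee
and the factor $t/r$ is unbounded near the axis $r=0$; the same defect afflicts the alternative weight $s^2/(t+r)$ you propose at the end, since it still involves $r$. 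The fix is one line: on $\Kcal$ we have $t\le t+r\le 2t$, hence $\frac{s^2}{2t}\le t-r\le \frac{s^2}{t}$, so it suffices to prove \eqref{eq:weightedSobolev} for the smooth comparable weight $\varpi\define (s^2/t)^{-\gamma}=t^\gamma s^{-2\gamma}$, which \emph{is} admissible: using $L_a s=0$, $L_a t=x^a$ and $L_bx^a=t\delta_{ab}$ one computes $L_a\varpi=\gamma t^{\gamma-1}x^a s^{-2\gamma}$ and $L_bL_a\varpi=\gamma s^{-2\gamma}\big((\gamma-1)t^{\gamma-2}x^ax^b+t^{\gamma}\delta_{ab}\big)$, both bounded by $C\varpi$ since $|x^a|\le r\le t$. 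Applying your second inequality to $\varpi\phi$ and distributing $L^J$ then gives $\sup_{\Hcal_s}|s(t-r)^{-\gamma}\phi|\le 2^\gamma\sup_{\Hcal_s}|s\varpi\phi|\lesssim\sum_{|J|\le2}\|(s/t)\varpi L^J\phi\|_{L^2_f(\Hcal_s)}\le\sum_{|J|\le2}\|(s/t)(t-r)^{-\gamma}L^J\phi\|_{L^2_f(\Hcal_s)}$. With this substitution your argument is complete and agrees with the intended (cited) proof.
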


We next state an $L^\infty-L^\infty$ estimate based on Kirchhoff's formula from \cite[Lemma 3.5]{Ma2020}. 
\begin{lemma}
\label{lem:supwave}
Suppose $\phi$ is a sufficiently smooth function supported in $\Kcal_{[s_0, s]}$,  vanishing near $\del\Kcal$ and satisfying the following Cauchy problem
\be \notag
\aligned
- \Box \phi &= f,
\quad
\phi|_{\Hcal_{s_0}} = \phi_0, \quad
\del_t \phi|_{\Hcal_{s_0}}  =\phi_1,
\endaligned
\ee
with $\phi_0, \phi_1$ being $C_c^\infty$ functions supported in $\Hcal_{s_0} \cap \Kcal$. Suppose that $f$ vanishes near $\del\Kcal$ and within  $\Kcal_{[s_0, s]}$ satisfies the bound
\be\notag
|f(t,x)| \leq \cf t^{-2}.
\ee
Then there exists a $C>0$ such that for all $(t,x)\in \Kcal_{[s_0, s]}$  the following estimate holds
\be\notag
|\phi(t,x)| \leq C \cf (s/t) + \ci s^{-1},
\ee
with $\ci$ a constant determined by $\phi_0$ and $\phi_1$.  
\end{lemma}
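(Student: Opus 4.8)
The plan is to split $\phi$ into the part carrying the Cauchy data and the part carrying the source, and to estimate each by means of the explicit wave kernel in $\RR^{1+2}$. Write $\phi=\phi_{\mathrm h}+\phi_{\mathrm i}$, where $\phi_{\mathrm h}$ solves $-\Box\phi_{\mathrm h}=0$ with data $(\phi_0,\phi_1)$ on $\Hcal_{s_0}$, and $\phi_{\mathrm i}\define\phi-\phi_{\mathrm h}$, so that $-\Box\phi_{\mathrm i}=f$ with vanishing Cauchy data on $\Hcal_{s_0}$. Since $\Kcal_{[s_0,s]}$ lies in the future domain of dependence of $\Hcal_{s_0}$ and $f$ is supported in $\Kcal_{[s_0,s]}\subset\{t\geq s_0>1\}$, in this region $\phi_{\mathrm i}$ coincides with the forward Duhamel solution of $-\Box(\cdot)=f$ with trivial data, and hence
\[
\phi_{\mathrm i}(t,x)=\frac1{2\pi}\int_{s_0}^{t}\int_{|y-x|\le t-\tau}\frac{f(\tau,y)}{\sqrt{(t-\tau)^2-|y-x|^2}}\,\di y\,\di\tau .
\]

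\emph{The homogeneous part.} For $\phi_{\mathrm h}$ I would invoke the sharp pointwise decay of homogeneous solutions of the wave equation in $\RR^{1+2}$ with smooth compactly supported data, namely $|\phi_{\mathrm h}(t,x)|\lesssim(1+t+r)^{-1/2}(1+|t-r|)^{-1/2}$ with implicit constant depending only on $\phi_0,\phi_1$. On $\Kcal_{[s_0,s]}$ one has $r<t-1$, hence $|t-r|\geq 1$, so
\[
|\phi_{\mathrm h}(t,x)|\ \lesssim\ (t+r)^{-1/2}(t-r)^{-1/2}\ =\ (t^2-r^2)^{-1/2}\ =\ s^{-1},
\]
which produces the term $C_0 s^{-1}$ with $C_0$ determined by $\phi_0,\phi_1$.

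\emph{The inhomogeneous part and the main obstacle.} Using the nonnegativity of the kernel, the bound $|f(\tau,y)|\leq C_f\tau^{-2}$, and that $f$ is supported in $\{|y|\le\tau-1\}$, it remains to prove
\[
\int_{s_0}^{t}\tau^{-2}\Big(\int_{D_\tau}\frac{\di y}{\sqrt{(t-\tau)^2-|y-x|^2}}\Big)\di\tau\ \lesssim\ \frac st,\qquad D_\tau\define\{|y-x|\le t-\tau\}\cap\{|y|\le\tau-1\}.
\]
Note that $D_\tau$ is empty unless $r\le t-1$ (consistent with $(t,x)\in\Kcal$), and is essentially empty when $(t,x)$ is close to $\del\Kcal$ (consistent with the smallness of $s/t$ there). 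I would estimate the inner integral by passing to polar coordinates $y=x+\rho\omega$ about $x$, which gives $\int\rho\,\big((t-\tau)^2-\rho^2\big)^{-1/2}\big|\{\omega\in S^1:|x+\rho\omega|\le\tau-1\}\big|\,\di\rho$ over $\rho\in[\max(0,r-\tau+1),\min(t-\tau,r+\tau-1)]$, bound the angular measure and the radial weight, and then split the $\tau$-integral at the natural thresholds $\tau=\tfrac{t-r+1}{2}$ and $\tau=\tfrac{t+r+1}{2}$, where $D_\tau$ is respectively the whole disc $\{|y|\le\tau-1\}$, a lens, and the whole backward disc $\{|y-x|\le t-\tau\}$; combining the resulting bounds with the factor $\tau^{-2}$ then yields the estimate. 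The hard part is exactly this last step: the naive bound $\int_{D_\tau}(\cdots)\,\di y\le 2\pi(t-\tau)$ is useless, since $\int_{s_0}^t\tau^{-2}(t-\tau)\,\di\tau$ grows in $t$; one must genuinely exploit the lens geometry of $D_\tau$ in the intermediate regime — where the angular sector $\{\omega:|x+\rho\omega|\le\tau-1\}$ is narrow and $\rho$ is confined to a thin annulus, so that the radial integral is controlled by $s$ rather than by $t-\tau$ — as well as the fact that the slice $\{|y|\le\tau-1\}$ is small for $\tau$ near $s_0$, in order to recover both convergence in $\tau$ and the sharp weight $s/t$. All other steps are routine.
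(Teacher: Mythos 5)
Your overall plan --- split $\phi$ into a homogeneous part carrying the data and a Duhamel part carrying the source, bound the first by the classical dispersive decay of free $2$D waves and the second via the explicit kernel $\bigl((t-\tau)^2-|y-x|^2\bigr)^{-1/2}$ --- is in the same spirit as the source this lemma is quoted from (the paper itself gives no proof; it cites Ma \cite{Ma2018}, Prop.~4.1, whose argument is precisely a quantitative analysis of this representation formula). The homogeneous part is essentially fine, modulo the minor point that the data here are prescribed on the hyperboloid $\Hcal_{s_0}$ rather than on a flat slice, so the standard bound $|\phi_{\mathrm h}|\lesssim (1+t+r)^{-1/2}(1+|t-r|)^{-1/2}$ needs a word of justification (e.g.\ by relating the hyperboloidal data to data on a constant-$t$ slice inside $\Kcal$ via local solvability and finite speed of propagation); that is routine.

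The genuine gap is in the inhomogeneous part: the inequality
\be
\int_{s_0}^{t}\tau^{-2}\Big(\int_{D_\tau}\frac{\di y}{\sqrt{(t-\tau)^2-|y-x|^2}}\Big)\di\tau\ \lesssim\ \frac st
\ee
is the entire content of the lemma, and you do not prove it --- you describe a splitting of the $\tau$-integral at $\tau=\tfrac{t-r+1}{2}$ and $\tau=\tfrac{t+r+1}{2}$ and say one should ``bound the angular measure and the radial weight'' and ``exploit the lens geometry,'' then acknowledge that this is the hard step. But this is exactly where the estimate can fail if done naively: as you note, the crude bound $\int_{D_\tau}(\cdots)\,\di y\le 2\pi(t-\tau)$ gives an integrand $\tau^{-2}(t-\tau)$ whose integral grows in $t$, and even refined but careless bounds tend to produce either a logarithm or the weight $1$ instead of $s/t$, which degenerates like $t^{-1/2}$ near $\del\Kcal$ and is what makes the lemma useful. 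Extracting $s/t$ requires explicit quantitative control of the angular aperture $\bigl|\{\omega:|x+\rho\omega|\le\tau-1\}\bigr|$ and of the width of the admissible $\rho$-annulus in the intermediate (lens) regime, together with the near-$s_0$ smallness of the slice $\{|y|\le\tau-1\}$, and none of these bounds are derived in your proposal. In the cited proof of Ma this is carried out by a careful change of variables and case analysis on the kernel integral; without that computation (or an equivalent one) the proposal does not establish the stated estimate, so as written it is an outline rather than a proof.
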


We now present a novel method to deduce refined pointwise estimates for a solution to a wave equation under some mild assumptions on the nonlinearity. 

\begin{proposition}\label{prop:RefinedWeightedLinfty}
Let $N_0\in \mathbb{N}_{\geq 2}$.
Suppose $\phi$ is a sufficiently smooth function supported in $\Kcal_{[s_0, s]}$,  vanishing near $\del\Kcal$ and satisfying the following Cauchy problem
\be \label{eq:dec455}
\aligned
- \Box \phi = f,
\quad 
\phi|_{\Hcal_{s_0}} = \phi_0, \quad
\del_t \phi|_{\Hcal_{s_0}}  =\phi_1,
\endaligned
\ee
with $\phi_0, \phi_1$ being $C_c^\infty$ functions supported in $\Hcal_{s_0} \cap \Kcal$.
Suppose that $f$ vanishes near $\del\Kcal$ and for $A, B>0$ some constants and $|I| \leq N_0$ the following bounds hold
\bea\notag
|Z^I f(t,x)| &\leq A t^{-2}, \quad &(t,x) \in \Kcal_{[s_0, s]} \\
 \| Z^I f\|_{L^2_f(\Hcal_{\tau})} &\leq B\tau^{-1}, \quad &\tau \in [s_0, s].
\eea
Then there exists a  constant $C>0$ such that
\bea \notag
\sum_\alpha |s \del_\alpha Z^I w| + \sum_\alpha \big| (t-r) (t/s) \del_\alpha Z^I  w \big| 
& \leq C  D \qquad |I|\leq N_0-2
\eea
where $D$ is a constant depending on $A, B$ and $\phi_0, \phi_1$. 
\end{proposition}
 
\begin{proof}
By Lemma \ref{lem:supwave} and the smallness on the initial data we have, for $|I|\leq N_0$,
\be \notag
|Z^I w| \lesssim A(s/t) + \ci s^{-1}
\lesssim (A+\ci)(s/t).
\ee
In the above $\ci$ is a constant determined by $\phi_0$ and $\phi_1$ however we may increase it below from line to line as needed. 
By commuting $L_a$ through \eqref{eq:dec455} we also have, for $|I|\leq N_0-1$,
\be\notag
|L_a Z^I w|
\lesssim (A+\ci)(s/t).
\ee

We now use these refined pointwise estimates to derive refined pointwise estimates for derivatives of the wave component. 
Using the conformal energy estimate of Lemma \ref{lem:ConformalEnergy}  we have, for $|I|\leq N_0$,  
\bea\notag
E_{con} (s,Z^I w)^{1/2}
&\leq 
E_{con} (s_0,Z^I w)^{1/2}
+
C  \int_{s_0}^s \tau \| Z^I F\|_{L^2_f(\Hcal_{\tau})} \, \di \tau
\lesssim \ci  + B s.
\eea 
By the Klainerman-Sobolev estimate of Lemma \ref{lem:sobolev}, commutator identities in \eqref{eq:CommutatorsK}, and estimates from Lemma \ref{Lemma:Est-s/t}, we have on $\Hcal_s$, for $|I|\leq N_0-2$,  
\bea \notag
|(K+1)Z^I w|
&\lesssim t^{-1} \sum_{|J'|\leq 2} \| L^{J'} ((K+1)Z^I w) \|_{L^2_f(\Hcal_s)}
\lesssim t^{-1} \sum_{|I|\leq N-3}E_{con}(s,Z^I w)^{1/2}
\\
&\lesssim t^{-1}( \ci + B s)
\lesssim ( \ci + B)(s/t).
\eea
Finally by using the identity $Kw=L_0 w + (x^a/t)L_a w$ we have, for $|I|\leq N_0-2$,
\bea \notag
|L_0 Z^I w| & \leq
|(K+1)Z^Iw| + \sum_a |L_aZ^Iw| + |Z^I w|
 \lesssim
(A + B + \ci)(s/t).
\eea
From the identities $\underdel_a= t^{-1} L_a$ and $\underdel_\perp= t^{-1} L_0$, we find, for $|I|\leq N_0-2$,
\be\notag
|\underdel_a Z^I w| + |\underdel_\perp Z^I w|
\lesssim
(A + B + \ci)t^{-1} (s/t) .
\ee
Using the identities \eqref{deriv-identities} we deduce that,  for $|I|\leq N_0-2$,
\be\notag
|\del_t Z^I w| + |\del_a Z^I w|
\lesssim
(A + B + \ci) (t/s)^2 t^{-1} (s/t) 
 \lesssim  (A + B + \ci) s^{-1}.
\ee
Finally, from the identity $s^2 = t^2-r^2$,
\be\notag
\big| (t-r) (t/s) \del_\alpha Z^I  w \big| 
\lesssim (A + B + \ci)\frac{s^2 t}{(t+r)s^2}
\lesssim (A + B + \ci).
\ee
The proof is finished.
\end{proof}

We conclude this section with an estimate on second order derivatives of wave components. It follows easily from  the decomposition \eqref{eq:wavedecomp}, see for example a proof in \cite[\textsection8.1, \textsection8.2]{PLF-YM-book}).

\begin{lemma}\label{lem:Wavedeldel}
Suppose $\phi$ is a sufficiently smooth function supported in $\Kcal_{[s_0, s]}$  vanishing near $\del\Kcal$ and satisfying
\be \notag
\Box \phi = f.
\ee
Then there exists a constant $C>0$ such that
\be \notag
\sum_{\alpha, \beta} |\del_\alpha \del_\beta \phi| \leq C\sum_\alpha \Big(  \frac{1}{t-r} \left( |\del_\alpha L \phi| + |\del_\alpha \phi| \right) + \frac{t}{t-r} |f| \Big).
\ee
\end{lemma}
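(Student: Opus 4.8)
The plan is to start from the semi-hyperboloidal decomposition of the wave operator in \eqref{eq:wavedecomp} and solve algebraically for $\del_t\del_t\phi$. Writing $\Box\phi = f$ (note the sign convention here differs from $-\Box$ used elsewhere, but this only changes signs of lower-order terms), the decomposition gives
\bea
\left(\frac{s}{t}\right)^2 \del_t \del_t \phi
&= -f + t^{-1}\Big( 2\frac{x^a}{t} L_a \del_t \phi - \sum_{a=1}^2 L_a \underdel_a \phi - \frac{r^2}{t^2}\del_t \phi + 2\del_t\phi \Big),
\eea
using that $-\Box = \del_t^2-\sum_a\del_a^2$ so $\Box\phi=f$ means $\del_t^2\phi - \sum_a\del_a^2\phi = -f$. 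Since $s^2 = t^2-r^2 = (t-r)(t+r)$ and $t+r \lesssim t$ on $\Kcal$, we have $(t/s)^2 \lesssim t/(t-r)$. The terms $L_a\del_t\phi$ and $L_a\underdel_a\phi$ are bounded by $|\del L\phi| + |\del\phi|$ (the latter absorbing the commutator $[\del_a,L_a]=\del_t$ and the $\underdel_a = \del_a + (x_a/t)\del_t$ conversion), while $(x^a/t), (r^2/t^2) \lesssim 1$. Hence
\be
|\del_t\del_t\phi| \lesssim \frac{t}{t-r}\Big( |f| + t^{-1}\big(|\del L\phi| + |\del\phi|\big)\Big) \lesssim \frac{1}{t-r}\big(|\del L\phi| + |\del\phi|\big) + \frac{t}{t-r}|f|,
\ee
which gives the claimed bound for $(\alpha,\beta)=(0,0)$.

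For the remaining components I would use the identities \eqref{deriv-identities}, which express $\del_a$ in terms of $\underdel_\perp$ and $\underdel_b$ with coefficients involving $t/s^2$ and $x^ax^b/t^2$. The mixed and spatial second derivatives $\del_a\del_t\phi$ and $\del_a\del_b\phi$ can be written, via these identities or directly by commuting, as combinations of $\del_t\del_t\phi$ (already estimated), terms of the form $\underdel_a(\del\phi) = t^{-1}L_a(\del\phi)$ which are controlled by $t^{-1}(t-r)^{-1}\cdot t \cdot |\del L\phi|$ after noting $t^{-1}\lesssim (t-r)^{-1}$ is false in general — instead one keeps $t^{-1}|L_a\del\phi| \le t^{-1}(|\del L\phi|+|[\del,L]\phi|) \lesssim t^{-1}(|\del L\phi|+|\del\phi|) \lesssim \frac{1}{t-r}(|\del L\phi|+|\del\phi|)$ using $t-r \le t$ — plus lower-order terms of the schematic form $t^{-1}|\del\phi|$ arising from derivatives hitting the coefficients $x^a/t$. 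Each such error is dominated by $\frac{1}{t-r}|\del\phi|$, and no new contribution of $f$ beyond the $\frac{t}{t-r}|f|$ term appears since $f$ only enters through $\del_t\del_t\phi$.

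I expect the only mildly delicate point to be bookkeeping the coefficients when converting between the Cartesian frame $\{\del_\alpha\}$ and the semi-hyperboloidal frame $\{\underdel_\alpha\}$: one must check that all the transition coefficients ($x^a/t$, $x^ax^b/t^2$, $r^2/t^2$) are bounded on $\Kcal$, that $(t/s)^2 \lesssim t/(t-r)$, and that the first-order terms generated by commutators and by differentiating these coefficients are all of the acceptable form $(t-r)^{-1}(|\del L\phi|+|\del\phi|)$ — crucially using $t-r \leq t$ so that a bare $t^{-1}$ is better than $(t-r)^{-1}$. There is no genuine analytic obstacle here; the content is entirely the pointwise algebra of \eqref{eq:wavedecomp} and \eqref{deriv-identities}, which is why the proof is deferred to the reference \cite{Ma2017a}.
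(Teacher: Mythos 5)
Your proposal is correct and follows essentially the same route as the paper, which points to the decomposition \eqref{eq:wavedecomp} and defers the computation to \cite[\textsection6.1]{Ma2017a}: solve for $(s/t)^2\del_t\del_t\phi$, use $(t/s)^2\lesssim t/(t-|x|)$, and reduce the mixed and spatial second derivatives to $\del_t\del_t\phi$ plus terms $t^{-1}(|\del L\phi|+|\del\phi|)$ via the frame identities and commutators. One small remark: your aside that ``$t^{-1}\lesssim (t-r)^{-1}$ is false in general'' is unnecessary, since on $\Kcal$ one has $0<t-r\leq t$, so this inequality does hold, exactly as the chain of estimates you then write uses it.
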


\section{Global Existence}\label{sec:Model1}
Our model problem I reads
\begin{subequations}\label{eq:model-1rep}
\begin{align}
-\Box w &= v^2, \label{eq:model-1-wave}
\\
-\Box v + v &= P^{\alpha\beta}\del_\alpha w \del_\beta w, \label{eq:model-1-KG}
\end{align}
\end{subequations}
with initial data $\big(w, \del_t w, v, \del_t v \big) (t_0) = (w_0, w_1, v_0, v_1)$ and where the constant coefficients $P^{\alpha\beta}$ do \emph{not} satisfy the null condition. 

\subsection{The bootstraps and preliminary estimates}
Fix $N\in\mathbb{Z}$ a large integer. As shown in \cite[\textsection11]{PLF-YM-book}, initial data posed on the hypersurface $\{t_0=2\}$ and localised in the unit ball $\{x\in\RR^2:r\leq 1\}$ can be developed as a solution of the PDE to the initial hyperboloid $\Hcal_{s_0}$, $s_0=2$, with the smallness conserved. Thus there exists a constant $C_0>0$ such that on the initial hyperboloid $\Hcal_{s_0}$ the following energy bounds hold for all $|I|\leq N$:
\bea\label{eq:m1BApre}
E(s_0, Z^I w)^{1/2} + \big\|  (s_0/t) (t-r)^{-1}\del Z^I w \big\|_{L^2_f(\Hcal_{s_0})} + E_1 (s_0, Z^I v)^{1/2} \leq C_0 \eps. 
\eea

Next we fix $\delta, \eps, C_1$ positive constants such that $\delta\ll 1$. Consider, for $|I|\leq N-1$
\begin{subequations} \label{eq:BA-Easy}
\be
s^{-\delta} E(s, Z^I w)^{1/2} + \big\| \tfrac{s}{t} (t-r)^{-1}\del Z^I w \big\|_{L^2_f(\Hcal_s)} + E_1 (s, Z^I v)^{1/2}
\leq C_1 \eps 
\ee
and for $|I| = N$
\be 
s^{-\delta} \big\|  (s/t) (t-r)^{-1}\del Z^I w \big\|_{L^2_f(\Hcal_s)} + s^{-(1+\delta)} E_1 (s, Z^I v)^{1/2}
\leq C_1 \eps 
\ee
\end{subequations}
 \underline{For the rest of section \ref{sec:Model1}} we assume, without restating the fact, that \eqref{eq:BA-Easy} hold on a hyperbolic time interval $[s_0, s^*)$  where
 $s^* \define \sup_{s\geq s_0} \{\eqref{eq:BA-Easy} \text{ hold on }  [s_0, s)\}$. 
Taking $C_1 \gg C_0$ we see, by continuity of the above $L^2$ functionals, that $s^*>s_0$. 
We first assume $s^*<+\infty$ and then we will derive a contradiction to assert that $s^*=+\infty$. 

The bootstrap assumptions \eqref{eq:BA-Easy} and  definition \eqref{eq:2energy} imply the following 
\bea
\| Z^I v\|_{L^2_f(\Hcal_s)} + \| (s/t) \del Z^I v\|_{L^2_f(\Hcal_s)} + s^{-\delta} \| (s/t) \del Z^I w\|_{L^2_f(\Hcal_s)}
&\lesssim C_1 \eps ,
\qquad
|I|\leq N-1,
\\
\| Z^I v\|_{L^2_f(\Hcal_s)} + \| (s/t) \del Z^I v\|_{L^2_f(\Hcal_s)} + s \big\|  (s/t) (t-r)^{-1}\del Z^I w \big\|_{L^2_f(\Hcal_s)} 
&\lesssim C_1 \eps s^{1+\delta},
\qquad
|I| = N.
\eea
The bootstrap assumptions \eqref{eq:BA-Easy} and the Sobolev estimates of Lemma \ref{lem:sobolev} imply the following  pointwise Klein-Gordon estimates
\begin{align} 
| Z^I v| 
\lesssim &C_1 \eps t^{-1},
\qquad
|I| \leq N-3, \label{eq:m1pointwise-KG1}
\end{align}
as well as the following pointwise wave estimate
\begin{align} 
| \del Z^I w| 
\lesssim &C_1 \eps s^{-1+\delta},
\qquad
|I| \leq N-3.\label{eq:m1pointwise-wave1}
\end{align}
Using \eqref{eq:m1pointwise-KG1} we have, for $|I|\leq N-3$,
\bea\notag
|Z^I(v^2)|
&\lesssim (C_1 \eps)^2 t^{-2}, \quad 
\\
\| Z^I (v^2)\|_{L^2_f(\Hcal_{s})} &\lesssim (C_1 \eps)^2 s^{-1}.
\eea
Proposition \ref{prop:RefinedWeightedLinfty} then yields, for $|I|\leq N-5$,
\bea \label{eq:RefinedWeightedLinfty}
&|\underdel_a Z^I w| + |\underdel_\perp Z^I w|
 \leq C
 C_1 \eps t^{-1} (s/t), 
\\
&s |\del_t Z^I w| + s|\del_a Z^I w| + \big| (t-r) (t/s) \del_\alpha Z^I  w \big| 
 \leq C  C_1 \eps.
\eea

\subsection{Lower-order bootstraps}
\subsubsection{Klein-Gordon component}
We first introduce the new variable $V \define v-P^{\alpha\beta}\del_\alpha w \del_\beta w$
which obeys the equation
\be \label{e1:m1transfV}
-\Box V + V
= 2 P^{\alpha\beta}Q_0(\del_\alpha w, \del_\beta w) 
-2vP^{\alpha\beta}\del_\alpha v \del_\beta w - 2 P^{\alpha \beta}v \del_\alpha w \del_\beta v.
\ee
We begin with a Lemma estimating the final two nonlinearities appearing in \eqref{e1:m1transfV}. 
\begin{lemma}\label{lem:model1-300}
For all multi-indices of order $|I|\leq N-1$, there exists a $C>0$ such that  
\bea\notag 
\sum_{\alpha, \beta} \| Z^I (v \cdot \del_\alpha v \cdot \del_\beta w)\|_{L^2_f(\Hcal_{s})}
\leq
C (C_1 \eps)^3 s^{-1-\delta}.
\eea
\end{lemma}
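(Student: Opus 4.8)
The plan is to estimate the $L^2_f(\Hcal_s)$ norm of $\del^I L^J(v \cdot \del_\alpha v \cdot \del_\beta w)$ by distributing the derivatives via the Leibniz rule, splitting into cases according to which factor carries the most derivatives, and then applying an $L^\infty \times L^\infty \times L^2$ (or $L^\infty \times L^2 \times L^\infty$) Hölder-type splitting on each hyperboloid. Since $|I|+|J| \leq N-1$, each summand in $\del^I L^J(v \cdot \del_\alpha v \cdot \del_\beta w)$ is of the form $\del^{I_1}L^{J_1} v \cdot \del^{I_2} L^{J_2}(\del_\alpha v) \cdot \del^{I_3} L^{J_3}(\del_\beta w)$ with the three multi-index orders summing to at most $N-1$; hence at most one of the three orders can exceed $N-5$, and the other two are $\leq N-5$ and can be controlled pointwise using the refined estimates established earlier in this section.

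First I would note the two key pointwise ingredients: the Klein-Gordon pointwise estimate \eqref{eq:m1pointwise-KG1}, which gives $|\del^{I'}L^{J'} v| \lesssim C_1\eps\, t^{-1}$ for $|I'|+|J'|\leq N-3$ (and likewise for $\del_\alpha v$ since $\del_\alpha v = \del^{I''} v$ with one extra derivative, so this holds for $|I'|+|J'| \leq N-4$); and Corollary \ref{corol:RefinedWeightedLinfty1}, which gives $|\del_\alpha \del^{I'}L^{J'} w| \lesssim C_1 \eps\, s^{-1}$ for $|I'|+|J'| \leq N-5$. For the high-order $L^2$ factor I would use the bootstrap-derived $L^2$ bounds: $\|\del^{I'}L^{J'} v\|_{L^2_f(\Hcal_s)} \lesssim C_1\eps$ and $\|(s/t)\del \del^{I'}L^{J'} v\|_{L^2_f(\Hcal_s)} \lesssim C_1\eps$ for $|I'|+|J'|\leq N-1$, together with the weighted wave energy bootstrap $\|(s/t)(t-|x|)^{-1}\del \del^{I'}L^{J'} w\|_{L^2_f(\Hcal_s)} \lesssim C_1\eps s^\delta$ for $|I'|+|J'| \leq N$.

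Then I would run through the cases. \emph{Case A: $\del_\beta w$ carries the top order.} Bound $v$ and $\del_\alpha v$ in $L^\infty$ by $C_1\eps\, t^{-1}$ each, leaving $\|t^{-2}\del^{I_3}L^{J_3}\del_\beta w\|_{L^2_f}$; converting $t^{-2} = (t/s)(t-r)^{-1} \cdot (s/t)(t-r) t^{-2}$ and using $(t-r) \leq t$, $(s/t)(t-r) t^{-2} \lesssim s^{-1} t^{-1} \cdot (t-r)/t \cdot$ — more cleanly, write $t^{-2} \del^{I_3}L^{J_3}\del_\beta w = \big((s/t)(t-r)^{-1}\del^{I_3}L^{J_3}\del_\beta w\big)\cdot \frac{t}{s(t-r)^{-1}}\cdot t^{-2}$ and bound the scalar weight $\frac{t^2}{s\,t^{2}}\cdot(t-r) = \frac{t-r}{s} \lesssim \frac{t-r}{s}$; since in $\Kcal$ we have $s^2 = (t-r)(t+r) \geq (t-r)\cdot s$, i.e. $t-r \leq s$, the scalar is $\lesssim 1$, but we actually want a decaying factor, so instead keep one factor of $t^{-1}$ from one of the Klein-Gordon fields as genuine decay: $|v||\del_\alpha v| \lesssim (C_1\eps)^2 t^{-2} \leq (C_1\eps)^2 s^{-1}(t-r)^{-1}\cdot\frac{s(t-r)}{t^2}$ and $\frac{s(t-r)}{t^2} \leq \frac{s \cdot s}{t^2} = (s/t)^2 \leq 1$; combining with $\|(s/t)(t-r)^{-1}\del^{I_3}L^{J_3}\del_\beta w\|_{L^2_f}\lesssim C_1\eps s^\delta$ and absorbing a spare $t^{-1}\lesssim s^{-1}$ from the remaining Klein-Gordon decay yields $(C_1\eps)^3 s^{-1+\delta}\cdot s^{-1}$, and one checks the powers close to give $s^{-1-\delta}$ after using $\delta \ll 1$ and the extra $t^{-1}$-decay more carefully — in practice the gain comes from the two factors of $t^{-1} \lesssim s^{-1}(t/s)^{-1}\cdot\dots$; I will present this splitting so that two powers of $s^{-1}$ are produced from the Klein-Gordon factors against at most $s^{\delta}$ growth from the wave $L^2$ term. \emph{Case B: $\del_\alpha v$ carries the top order.} Bound $v$ in $L^\infty$ by $C_1\eps\, t^{-1}$, bound $\del_\beta w$ in $L^\infty$ by $C_1 \eps\, s^{-1}$ via Corollary \ref{corol:RefinedWeightedLinfty1}, and keep $\|(s/t)\del \del^{I_2}L^{J_2} v\|_{L^2_f}\lesssim C_1\eps$; the product of scalar weights is $t^{-1} s^{-1} (t/s) = s^{-2}$, giving $(C_1\eps)^3 s^{-2} \leq (C_1\eps)^3 s^{-1-\delta}$. \emph{Case C: $v$ carries the top order.} Symmetric to Case B, bounding $\del_\alpha v$ by $C_1\eps\, t^{-1}$ in $L^\infty$, $\del_\beta w$ by $C_1\eps\, s^{-1}$ in $L^\infty$, and $\|\del^{I_1}L^{J_1}v\|_{L^2_f}\lesssim C_1\eps$, giving again $(C_1\eps)^3 s^{-2}$.

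The main obstacle is Case A, where the wave factor $\del^{I_3}L^{J_3}\del_\beta w$ sits at top order $N$ and therefore only enjoys the growing weighted bound $\lesssim C_1\eps s^\delta$; the cubic structure must be exploited so that the two accompanying Klein-Gordon factors each contribute a full power of $t^{-1}\lesssim s^{-1}$ while the $(t-r)^{-1}$-weight on $w$ is matched by a compensating $(t-r)$ extracted from $t^{-2}=(t-r)t^{-2}\cdot(t-r)^{-1}$ together with the bound $t-r \leq s^2/t$ valid in $\Kcal$. Getting the bookkeeping of $(s/t)$, $(t-r)$ and $t$ weights exactly right — so that the residual power is $s^{-1-\delta}$ rather than merely $s^{-1+\delta}$ — is the delicate point, and it is precisely here that having kept the weighted wave energy (rather than an unweighted one) in the bootstrap pays off, exactly as advertised in the introduction. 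All three cases reduce, after the Hölder splitting, to the quoted bootstrap $L^2$ bounds and the pointwise estimates \eqref{eq:m1pointwise-KG1} and \eqref{eq:RefinedWeightedLinfty1}, so no new analytic input beyond careful weight-tracking is needed.
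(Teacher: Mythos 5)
Your overall structure (Leibniz expansion, three cases according to which factor carries the top order, then an $L^\infty\times L^\infty\times L^2$ H\"older splitting on each hyperboloid) is the same as the paper's, and your Cases B and C close correctly. The gap is in Case A, precisely the case you flag as delicate. There you insist on pairing the top-order wave factor with the $(t-r)$-weighted bootstrap norm $\|(s/t)(t-|x|)^{-1}\del\,\del^{I_3}L^{J_3}w\|_{L^2_f(\Hcal_s)}$, which forces the accompanying $L^\infty$ scalar to be $|v\,\del_\alpha v|\cdot \tfrac{t\,(t-|x|)}{s}\lesssim (C_1\eps)^2\,\tfrac{t-|x|}{st}$. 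On $\Hcal_s\cap\Kcal$ the supremum of $\tfrac{t-|x|}{st}$ is of size $s^{-1}$, not $s^{-2}$: it is attained at $r=0$, where $t-|x|=t=s$, so the factor $(t-|x|)$ you must pay to match the weight eats one of your two powers of $t^{-1}$. Hence this pairing yields at best $(C_1\eps)^3 s^{-1}$ (weighted norm at order $\leq N-1$) or $(C_1\eps)^3 s^{-1+\delta}$, which falls short of the required $s^{-1-\delta}$; the ``spare $t^{-1}$ from the remaining Klein--Gordon decay'' that you invoke to gain an extra $s^{-1}$ does not exist, since both Klein--Gordon factors have already been consumed in producing $t^{-2}$. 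Your concluding claim that keeping the weighted wave energy is what makes this lemma work is therefore backwards: the weighted energy is essential elsewhere (e.g.\ Corollary \ref{corol:m1impKGlow} and Lemma \ref{lem:m1waveweighted}), but not here.

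The fix is the paper's simpler pairing with the \emph{unweighted} energy: estimate $\big\|\del^{I_1}L^{J_1}v\cdot\del^{I_2}L^{J_2}\del_\alpha v\cdot(t/s)\big\|_{L^\infty(\Hcal_s)}\lesssim (C_1\eps)^2\sup_{\Hcal_s}\tfrac{1}{st}\lesssim (C_1\eps)^2 s^{-2}$ (using $t\geq s$ on $\Hcal_s$), and multiply by $\|(s/t)\del^{I_3}L^{J_3}\del_\beta w\|_{L^2_f(\Hcal_s)}\lesssim C_1\eps\, s^{\delta}$ from the standard energy bootstrap at order $\leq N-1$ (after commuting $\del_\beta$ past $\del^{I_3}L^{J_3}$ via Lemma \ref{lem:est-comm}). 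This gives $(C_1\eps)^3 s^{-2+\delta}\leq (C_1\eps)^3 s^{-1-\delta}$ once $\delta\leq 1/2$, which is exactly how the paper treats all three terms; in your Cases B and C the paper uses the rougher pointwise bound $|\del\,\del^I L^J w|\lesssim C_1\eps\, s^{-1+\delta}$ from \eqref{eq:m1pointwise-wave1} rather than Corollary \ref{corol:RefinedWeightedLinfty1}, a harmless difference.
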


\begin{proof}
If $N$ is sufficiently large so that we can apply the estimates \eqref{eq:m1pointwise-KG1} and  \eqref{eq:m1pointwise-wave1} ($\frac{N-1}{2}+1\leq N-3$ suffices), then we find using the standard commutator estimates of Lemma \ref{lem:est-comm}, that 
\begin{align*}
& \| Z^I (v \del_\alpha v \del_\beta w)\|_{L^2_f(\Hcal_{s})}
\\
&\lesssim
\sum_{|I_1|\leq \frac{N-1}{2}, |I_2| \leq \frac{N-2}{2}} \|(t/s)Z^{I_1}v \cdot Z^{I_2}\del_\alpha v \|_{L^\infty(\Hcal_{s})} \sum_{|I_2|\leq N-1} \| (s/t)Z^{I_2}  \del_\beta w\|_{L^2_f(\Hcal_{s})}
\\
& \quad + \sum_{|I_1|\leq \frac{N-1}{2}, |I_2| \leq \frac{N-2}{2}}\|Z^{I_1} \del_\alpha v \cdot Z^{I_2} \del_\beta w \|_{L^\infty(\Hcal_{s})} \sum_{|I_2|\leq N-1} \| Z^{I_2}v\|_{L^2_f(\Hcal_{s})}
\\
& \quad  + \sum_{|I_1|\leq \frac{N-1}{2}, |I_2| \leq \frac{N-2}{2}} \|(t/s)Z^{I_1}v \cdot Z^{I_2} \del_\beta w \|_{L^\infty(\Hcal_{s})}\sum_{|I_2|\leq N-1} \|(s/t) Z^{I_2} \del_\alpha v\|_{L^2_f(\Hcal_{s})}
\\
& \lesssim	
	(C_1 \eps)^3  \Big(\|(t/s)t^{-2} s^\delta \|_{L^\infty(\Hcal_{s})}
+\|t^{-1} s^{-1+\delta}\|_{L^\infty(\Hcal_{s})} 
+\|(t/s)t^{-1}s^{-1+\delta} \|_{L^\infty(\Hcal_{s})} \Big)
\\
&\lesssim 
(C_1\eps)^3 s^{-2+\delta}.
\end{align*}
The required conclusion holds provided $\delta \leq 1/2$. 
\end{proof}

\begin{proposition}\label{prop:model1-343}
For all multi-indices of order $|I|\leq N-1$, there exists a $C>0$ such that  
\bea \notag
E_1 (s, Z^I V)^{1/2}
\leq
C \eps + C (C_1 \eps)^2.
\eea
\end{proposition}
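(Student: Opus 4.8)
The plan is to apply the standard energy estimate of Proposition~\ref{prop:BasicEnergyEstimate} to the transformed Klein-Gordon variable $V$ defined in \eqref{eq:m1transfV}, which solves the equation \eqref{e1:m1transfV}. Commuting $\del^I L^J$ with $|I|+|J|\le N-1$ through \eqref{e1:m1transfV} and using Proposition~\ref{prop:BasicEnergyEstimate} we get
\bea
E_1(s,\del^I L^J V)^{1/2} \leq E_1(s_0,\del^I L^J V)^{1/2} + \int_{s_0}^s \big\| \del^I L^J \big( \text{RHS of \eqref{e1:m1transfV}} \big) \big\|_{L^2_f(\Hcal_\tau)} \di\tau.
\eea
The data term is $\lesssim \eps$ by \eqref{eq:m1BApre} together with the pointwise control of $\del w$ on $\Hcal_{s_0}$. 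So the whole matter reduces to showing that the three nonlinear source terms on the right-hand side of \eqref{e1:m1transfV}, after commuting through $\del^I L^J$, have $L^2_f(\Hcal_\tau)$-norm integrable in $\tau$ with total integral $\lesssim (C_1\eps)^2$.

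First I would handle the two cubic terms $v P^{\alpha\beta}\del_\alpha v\,\del_\beta w$ and $P^{\alpha\beta} v\,\del_\alpha w\,\del_\beta v$: these are both of the form $v\cdot\del v\cdot\del w$ and Lemma~\ref{lem:model1-300} gives exactly $\|\del^I L^J(v\,\del_\alpha v\,\del_\beta w)\|_{L^2_f(\Hcal_\tau)} \lesssim (C_1\eps)^3 \tau^{-1-\delta}$ for $|I|+|J|\le N-1$, which integrates to $(C_1\eps)^3 \lesssim (C_1\eps)^2$. Second, the null-form term $P^{\alpha\beta}Q_0(\del_\alpha w,\del_\beta w)$: here I would use the commutator identity \eqref{eq:CommutatorsQzero} to move $\del^I L^J$ inside the $Q_0$, then apply the null-form estimate of Lemma~\ref{lem:null}, distributing $|I|+|J|\le N-1$ derivatives between the two factors. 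One factor then carries at most $\lfloor (N-1)/2\rfloor \le N-5$ derivatives (since $N\ge 10$) and is placed in $L^\infty$ using Lemma~\ref{lem:refWaveEst1} / Corollary~\ref{corol:RefinedWeightedLinfty1}, which give $|(s/t)\del\del^{I_1}L^{J_1} w|\lesssim C_1\eps t^{-1}(s/t)$-type bounds and $(t-r)$-weighted $L^\infty$ control; the other factor carries up to $N-1$ derivatives and is placed in $L^2$ via the $(t-r)$-weighted $L^2$ bootstrap assumption in \eqref{eq:BA-Easy} (the $(s/t)(t-r)^{-1}\del\del^I L^J w$ bound with $|I|+|J|\le N-1$, which is $\le C_1\eps$). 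The null structure gives the gain of $(s/t)^2$ or a pair of tangential derivatives $\underdel$, and combining the two factors one should obtain a bound of the form $(C_1\eps)^2 \tau^{-1-\delta}$ or at worst $(C_1\eps)^2\tau^{-1}(\log\tau)$-type which is again integrable, yielding the contribution $(C_1\eps)^2$.

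The main obstacle is making the null-form term close with genuine room to spare: a crude pairing $\|(s/t)\del w\|_{L^2}\|(t/s)\del w\|_{L^\infty}$ only gives $\tau^{-1}$ decay (borderline, not integrable), as the introduction emphasises. The resolution is to insert the $(t-r)$-weight on \emph{both} the $L^2$ factor (using the weighted bootstrap assumption) and the $L^\infty$ factor (using the $(t-r)(t/s)\del\del^I L^J w\lesssim C_1\eps$ bound from Corollary~\ref{corol:RefinedWeightedLinfty1}), so that the $(t-r)^{-1}$ and $(t-r)^{+1}$ weights cancel while the null-structure gain of $(s/t)^2$ (equivalently $s^{-2}t^2$) times the extra $t^{-1}$ from Lemma~\ref{lem:refWaveEst1} produces genuine $\tau^{-1-\delta}$ or better integrable decay — this is precisely the ``$(t-r)$-weighted energy estimate for the wave equation'' strategy advertised in the introduction. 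One must also check the commutator error terms generated by $[\del^I L^J, Q_0]$ and by the non-exact commutation of $L^J$ with $K$ do not spoil the estimate; these are lower-order and absorbed by the same argument. Finally one reads off the claimed bound $E_1(s,\del^I L^J V)^{1/2} \le C\eps + C(C_1\eps)^2$.
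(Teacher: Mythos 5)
Your overall skeleton matches the paper's proof: the energy estimate of Proposition \ref{prop:BasicEnergyEstimate} applied to \eqref{e1:m1transfV}, Lemma \ref{lem:model1-300} for the cubic terms, and the null-form decomposition of Lemma \ref{lem:null} with a low/high derivative split for $P^{\alpha\beta}Q_0(\del_\alpha w,\del_\beta w)$. The gap is in the mechanism you propose for the worst null-form term, the one of type $(s/t)^2\,\del^{I_1}L^{J_1}\del_t\del_\alpha w\cdot \del^{I_2}L^{J_2}\del_t\del_\beta w$. Lemma \ref{lem:refWaveEst1} only controls the tangential and orthogonal derivatives $\underdel_a,\underdel_\perp$ of the commuted field, so it supplies no ``extra $t^{-1}$'' for a $\del_t\del$-type factor; and Corollary \ref{corol:RefinedWeightedLinfty1} gives $(t-|x|)\,|\del\del^{I_1}L^{J_1}\cdots w|\lesssim C_1\eps\,(s/t)$, so the pairing you describe, $\|(s/t)(t-r)A\|_{L^\infty}\cdot\|(s/t)(t-r)^{-1}B\|_{L^2}$ with $A$ the low-order and $B$ the high-order second-derivative factor and the $(t-r)$ weights cancelling, yields at best $(C_1\eps)^2 s^{\delta}$: the sup of $(s/t)^2$ over $\Hcal_s$ is of order one near the centre, and the weighted bootstrap norm needed for $B$ is the one at order $N$ (the factor is a single $\del$ of an order-$\le N$ commuted field, not order $N-1$), which costs $s^{\delta}$. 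This is not integrable, and your stated fallback ``$\tau^{-1}\log\tau$'' is itself not integrable either, so the argument as written does not close.

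The missing ingredient is Lemma \ref{lem:Wavedeldel}, i.e.\ using the equation $-\Box w=v^2$ to estimate second derivatives: $|\del\del\phi|\lesssim (t-|x|)^{-1}\big(|\del L\phi|+|\del\phi|\big)+\tfrac{t}{t-|x|}|v|^2$, as in \eqref{eq:m1-998}. Applied to the low-order factor, combined with $|\del\del^{I}L^{J}w|\lesssim C_1\eps s^{-1+\delta}$ from \eqref{eq:m1pointwise-wave1} and the inequality $\tfrac{s/t}{t-|x|}\lesssim s^{-1}$, it gives the refined pointwise bound $|(s/t)\del^{I_1}L^{J_1}\del_t\del_\alpha w|\lesssim C_1\eps s^{-2+\delta}$; applied to the high-order factor it converts the unweighted $L^2$ norm (which is \emph{not} bootstrapped at order $N$ for $w$) into the $(t-r)$-weighted bootstrap norm of \eqref{eq:BA-Easy} plus a harmless $v^2$ source term controlled by \eqref{eq:m1pointwise-KG1}. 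Only with both of these does the worst term come out as $(C_1\eps)^2 s^{-2+2\delta}$, integrable with room to spare, and analogous $(t-r)$-weighted pairings (using \eqref{eq:m1-999}) handle the mixed and tangential--tangential terms. A further minor point: your worry about commuting $L^J$ with $K$ is not relevant in this proposition; the conformal vector field only enters the proof of Lemma \ref{lem:refWaveEst1}, which is already established before this stage.
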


\begin{proof}
We first note that by the smallness assumptions on the data $
E_1(s_0, Z^I V)^{1/2} \leq C \eps$. 
Next, by the energy estimate of Proposition \ref{prop:BasicEnergyEstimate} applied to equation \eqref{e1:m1transfV}, we have
\bea \label{eq:M1energyde1}
E_1(s, Z^I V)^{1/2}
&\lesssim
E_1(s_0, Z^I V)^{1/2}
\\
&
+ \int_{s_0}^s 
\Big(\| Z^I Q_0( P^{\alpha\beta}\del_\alpha w,  \del_\beta w) \|_{L^2_f(\Hcal_{\tau})}
+ \| Z^I(v P^{\alpha\beta}\del_\alpha v \del_\beta w)\|_{L^2_f(\Hcal_{\tau})}\Big)
 \, \di \tau.
\eea
Clearly the second term under the integral  in \eqref{eq:M1energyde1} can be estimated using Lemma \ref{lem:model1-300}.  
For the first term  we use the null-form estimates of Lemma \ref{lem:null} which imply 
\bea\label{eq:M1nullstuff} 
|  Z^I &Q_0( P^{\alpha\beta}\del_\alpha w, \del_\beta w) |
 \lesssim \sum_{\substack{| I_1 | + | I_2 | \leq | I | }}  \Big[
(s/t)^2 \sum_{\alpha,\beta} \big|Z^{I_1} \del_t \del_\alpha w \cdot Z^{I_2}  \del_t \del_\beta w \big|
\\
&
\quad + \sum_{a, \alpha, \beta} \big| Z^{I_1} \underdel_a \del_\alpha w \cdot Z^{I_2} \del_t \del_\beta w \big| 
+ \sum_{a, b, \alpha, \beta} \big| Z^{I_1} \underdel_a \del_\alpha w \cdot Z^{I_2} \underdel_b\del_\beta w \big| \Big].
\eea
We begin by studying the two types of terms appearing on the right hand side of \eqref{eq:M1nullstuff}. Using Lemma \ref{lem:Wavedeldel}, and the commutator estimates of  \eqref{eq:CommutatorLdel} and \eqref{eq:est-cmt1} we have, 
\bea \label{eq:m1-998}
\big|(s/t) Z^I \del_t \del_\alpha w \big| 
&\lesssim \sum_{|J|\leq |I|} \sum_{\beta, \gamma} \big|(s/t)\del_\beta \del_\gamma Z^J w \big| 
\\
&\lesssim 
\frac{1}{(t-r)}
	\sum_{\substack{|J|\leq |I|+1}} \big|(s/t) \del Z^J w \big| 
+ \frac{s}{(t-r)} \sum_{\substack{|J|\leq \frac{|I|}{2}}} \big|Z^Jv \big| 
	\sum_{\substack{|J'|\leq |I|}} \big|Z^{J'}v \big|.
\eea

The second type of term appearing on the right hand side of \eqref{eq:M1nullstuff} is easier to treat. 
In particular, by the commutator estimates \eqref{eq:CommutatorLdel}, \eqref{eq:est-cmt1} and \eqref{eq:est-cmt2} we have,
\bea \label{eq:m1-999}
\big| Z^I \underdel_a \del_\alpha w\big| 
&\lesssim 
	\sum_{\substack{|J|\leq |I|}} 
t^{-1} \big| L_a Z^J \del_\alpha w\big| 
+ t^{-1} \sum_{\substack{|J|\leq |I|}}
\big| Z^J\del_\alpha w\big| 
\lesssim 
t^{-1}\sum_{|J|\leq |I|+1, \beta} 
 \big| \del_\beta Z^J w\big|.
\eea

We now consider the case $|I|\leq N-1$.
If $N$ is sufficiently large to apply the estimates \eqref{eq:m1pointwise-KG1} and  \eqref{eq:m1pointwise-wave1} ($\tfrac{N+1}{2}\leq N-2$ suffices), then using \eqref{eq:m1-998} we find, for $|I|\leq \tfrac{N-1}{2}$,
\bea\notag
\big|(s/t)Z^I \del_t \del_\alpha w \big| 
&\lesssim 
\frac{1}{(t-r)}
	\sum_{|J|\leq \frac{N+1}{2}}  \big|(s/t) \del Z^Jw \big| 
+ \frac{s}{(t-r)} \Big( \sum_{\substack{|J|\leq \frac{N-1}{2}}} \big|Z^Jv \big| \Big)^2
\\
&\lesssim
	 \frac{s/t}{(t-r)} C_1\eps s^{-1+\delta} + \frac{s}{(t-r)} (C_1\eps t^{-1})^2
\lesssim
C_1 \eps s^{-2+\delta},
\eea
while using \eqref{eq:m1-999} we find, for $|I|\leq \tfrac{N-1}{2}$,
\bea\notag
\big|Z^I\underdel_a \del_\alpha w\big| 
&\lesssim 
	\sum_{|J|\leq \frac{N+1}{2}, \beta} 
t^{-1} \big| \del_\beta Z^J w\big|
\lesssim C_1\eps t^{-1} s^{-1+\delta}.
\eea

Next if $|I|\leq N-1$ and $N$ is sufficiently large to apply \eqref{eq:m1pointwise-KG1} (i.e., $\tfrac{N-1}{2}\leq N-3$), then using \eqref{eq:m1-998} we obtain
\bea\notag
\|(s/t)Z^I\del_t \del_\alpha w \|_{L^2_f(\Hcal_{s})}
&\lesssim 
	\sum_{|J|\leq N}  \|(s/t) (t-r)^{-1}\del Z^Jw \|_{L^2_f(\Hcal_{s})} 
\\
& \quad 
+ \sum_{|J|\leq \frac{N-1}{2}} \big\| \frac{s}{(t-r)} Z^J v \big\|_{L^\infty(\Hcal_{s})} \cdot
\sum_{|J'|\leq N-1} \| Z^{J'}v \|_{L^2_f(\Hcal_{s})} 
\\
&\lesssim
	 C_1\eps s^\delta.
\eea
Similarly if $|I|\leq N-1$  then using \eqref{eq:m1-999} we find
\bea \notag
\|s(t-r)^{-1}Z^I\underdel_a \del_\alpha w \|_{L^2_f(\Hcal_{s})}
&\lesssim 
	\sum_{|J|\leq N}  \|(s/t) (t-r)^{-1}\del Z^Jw \|_{L^2_f(\Hcal_{s})} 
\lesssim
	 C_1\eps s^\delta.
\eea
Putting this all together, we have, for $|I|\leq N-1$,
\bea \notag
\sum_{\substack{| I_1 | + | I_2 | \leq | I |}} & \left\|(s/t)^2 Z^{I_1}  \del_t \del_\alpha w \cdot Z^{I_2}  \del_t \del_\beta w \right\|_{L^2_f(\Hcal_{s})}
\lesssim (C_1\eps)^2 s^{-2+2\delta},
\eea
and
\bea\notag
&\sum_{\substack{| I_1 | + | I_2 | \leq | I |}}  \left\|Z^{I_1} \underdel_a \del_\alpha w \cdot Z^{I_2} \del_t \del_\beta w \right\|_{L^2_f(\Hcal_{\tau})}
\\
&\quad \lesssim 
\sum_{\substack{| I_1 | \leq (N-1)/2\\ |I_2| \leq N-1 }} \left\|(t/s)Z^{I_1} \underdel_a \del_\alpha w \right\|_{L^\infty(\Hcal_{s})} \left\|(s/t)Z^{I_2}  \del_t \del_\beta w \right\|_{L^2_f(\Hcal_{s})}
\\
&\quad\quad+ \sum_{\substack{| I_2 | \leq (N-1)/2\\ |I_1| \leq N-1 }} \left\|s^{-1}(t-r)Z^{I_2} \del_t \del_\beta w  \right\|_{L^\infty(\Hcal_{s})} \left\| s(t-r)^{-1}Z^{I_1}\underdel_a \del_\alpha w\right\|_{L^2_f(\Hcal_{s})}
\\
&\quad\lesssim (C_1\eps)^2 s^{-2+2\delta},
\eea
and finally
\bea\notag
& \sum_{\substack{| I_1 | + | I_2 | \leq | I | }}\left\| Z^{I_1} \underdel_a \del_\alpha w \cdot Z^{I_2}  \underdel_b\del_\beta w  \right\|_{L^2_f(\Hcal_{s})}
\\
&\quad \lesssim 
	\sum_{|I_1|\leq (N-1)/2} \Big\|\frac{(t-r)}{s}Z^{I_1} \underdel_a \del_\alpha w \Big\|_{L^\infty(\Hcal_{s})}
	\sum_{|I_2|\leq N-1} \Big\| s(t-r)^{-1}Z^{I_2}\underdel_b\del_\beta w \Big\|_{L^2_f(\Hcal_{s})}
\\
&\quad \lesssim	(C_1\eps)^2 s^{-2+2\delta}.
\eea

Inserting all these estimates into \eqref{eq:M1energyde1}, together with the estimate of Lemma \ref{lem:model1-300} and restriction $\delta \leq 1/3$, we find,
\bea \notag
E_1(s, Z^I V)^{1/2}
&\lesssim \eps + 
 \int_{s_0}^s 
(C_1 \eps)^2 \tau^{-1-\delta} \di \tau \lesssim  \eps +  (C_1 \eps)^2.
\eea
The proof is done.
\end{proof}

\begin{corollary}\label{corol:m1impKGlow}
For all multi-indices of order $|I|\leq N-1$, there exists a $C>0$ such that 
\bea\notag
E_1(s, Z^I v)^{1/2} 
\leq
C \eps + C (C_1 \eps)^2.
\eea
\end{corollary}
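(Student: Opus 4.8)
The plan is to deduce Corollary \ref{corol:m1impKGlow} directly from Proposition \ref{prop:model1-343} together with the refined pointwise estimates for derivatives of the wave component already established in Corollary \ref{corol:RefinedWeightedLinfty1}. The key point is that the nonlinear transform $V = v - P^{\alpha\beta}\del_\alpha w \del_\beta w$ means that $v$ and $V$ differ only by the quadratic quantity $P^{\alpha\beta}\del_\alpha w \del_\beta w$, which — thanks to the refined $L^\infty$ control on $\del\del^I L^J w$ — turns out to be harmless at the order of regularity $|I|+|J|\leq N-1$. So the whole corollary is just a triangle-inequality argument at the level of the energy functional $E_1$.

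Concretely, first I would write, for any multi-index with $|I|+|J|\leq N-1$,
\be
E_1(s, \del^I L^J v)^{1/2} \leq E_1(s, \del^I L^J V)^{1/2} + E_1\big(s, \del^I L^J (P^{\alpha\beta}\del_\alpha w \del_\beta w)\big)^{1/2},
\ee
using that $E_1(s,\cdot)^{1/2}$ is (up to an irrelevant constant from the cross terms, using the equivalent forms in \eqref{eq:2energy}) subadditive, or more simply bounding $E_1(s,\del^I L^J v)^{1/2}\lesssim E_1(s,\del^I L^J V)^{1/2}+E_1(s,\del^I L^J(P^{\alpha\beta}\del_\alpha w\del_\beta w))^{1/2}$ directly. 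The first term is controlled by Proposition \ref{prop:model1-343}. For the second term I need to bound, in $L^2_f(\Hcal_s)$, the quantities $\del\del^I L^J(P^{\alpha\beta}\del_\alpha w\del_\beta w)$ and $\del^I L^J(P^{\alpha\beta}\del_\alpha w \del_\beta w)$ (the latter being the "massless" contribution $m^2\phi^2$ with $m=1$), all weighted appropriately by $(s/t)$ where the energy demands it. Using the Leibniz rule and the commutator estimates of Lemma \ref{lem:est-comm}, each such term is a sum of products of the form $\del^{I_1}L^{J_1}(\text{second derivative of }w)\cdot \del^{I_2}L^{J_2}(\text{first or second derivative of }w)$ with $|I_1|+|J_1|+|I_2|+|J_2|\lesssim N$; I put the lower-order factor in $L^\infty$ and the higher-order factor in $L^2_f$.

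The decay needed is then supplied by the refined pointwise estimates. For the low-order factor (at most $\tfrac{N-1}{2}+1\leq N-5$ derivatives, which holds since $N\geq 10$), Corollary \ref{corol:RefinedWeightedLinfty1} gives $|\del_t\del^{I'}L^{J'}w|+|\del_a\del^{I'}L^{J'}w|\lesssim C_1\eps s^{-1}$, and bootstrap assumption \eqref{eq:BA-Easy} controls the $L^2_f$ norm of the high-order factor, including the weighted norm $\|(s/t)(t-|x|)^{-1}\del\del^{I}L^{J}w\|_{L^2_f(\Hcal_s)}\lesssim C_1\eps s^\delta$; for second derivatives I would invoke Lemma \ref{lem:Wavedeldel} as in the proof of Proposition \ref{prop:model1-343}, picking up an extra $(t-|x|)^{-1}$ that is exactly absorbed by the weighted energy. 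A short computation then shows $E_1(s,\del^I L^J(P^{\alpha\beta}\del_\alpha w\del_\beta w))^{1/2}\lesssim (C_1\eps)^2$, possibly with a small $s^{\text{-power}}$ to spare, and combining with Proposition \ref{prop:model1-343} yields $E_1(s,\del^IL^J v)^{1/2}\leq C\eps + C(C_1\eps)^2$ as claimed.

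I do not expect a genuine obstacle here — this corollary is essentially bookkeeping. The only mildly delicate point is making sure the second derivatives of $w$ appearing after differentiating the quadratic nonlinearity are handled with the $(t-|x|)$-weighted energy rather than the ordinary energy (since $|\del\del w|$ only has the $(t-|x|)^{-1}$-enhanced bound of Lemma \ref{lem:Wavedeldel}), and checking that the number of derivatives placed in $L^\infty$ stays within the range $\leq N-5$ where Corollary \ref{corol:RefinedWeightedLinfty1} applies — both of which work comfortably because $N\geq 10$.
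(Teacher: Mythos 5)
Your proposal is correct and follows essentially the same route as the paper: reduce via the transform \eqref{eq:m1transfV}, Proposition \ref{prop:model1-343} and Young's inequality to bounding $E_1\big(s,\del^I L^J(P^{\alpha\beta}\del_\alpha w\del_\beta w)\big)^{1/2}$, then estimate each component of that energy by pairing the refined weighted $L^\infty$ bounds of Lemma \ref{lem:refWaveEst1} and Corollary \ref{corol:RefinedWeightedLinfty1} on the low-order factor with the $(t-|x|)$-weighted $L^2$ bootstrap norms on the high-order factor. The only cosmetic difference is that for the second derivatives of $w$ the paper absorbs the extra derivative directly into the order-$N$ weighted bootstrap (e.g.\ terms like $L_a(\del\del^{I}L^{J}w)$) rather than invoking Lemma \ref{lem:Wavedeldel}, which is the same bookkeeping.
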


\begin{proof}
By Proposition \ref{prop:model1-343}, the transformation formula $v = V+P^{\alpha\beta}\del_\alpha w \del_\beta w$ and an application of Minkowski's inequality, it suffices to control, for $|I| \leq N-1$,
\bea \label{eq:fi223}
E_1\big( s, Z^I \big( P^{\alpha\beta}\del_\alpha w\del_\beta w\big) \big).
\eea

By using the commutator estimates of Lemma \ref{lem:est-comm}, and assuming $N$ is sufficiently large in order to apply the refined estimates \eqref{eq:RefinedWeightedLinfty} ($\frac{N-1}{2}\leq N-5$ suffices), we have
\bea\notag
\big\|&Z^I \big( P^{\alpha\beta}\del_\alpha w\del_\beta w\big)\big\|_{L^2_f(\Hcal_{s})}
\\
&\lesssim
\sum_{\substack{|I_1|\leq (N-1)/2\\ |I_2|\leq N-1}} \|(t/s)(t-r)\del_\alpha Z^{I_1} w\|_{L^\infty(\Hcal_{s})}
\big\|(s/t)(t-r)^{-1} \del_\beta Z^{I_2} w \big\|_{L^2_f(\Hcal_{s})}
\\
&
\lesssim
(C_1\eps)^2.
\eea
The second term of \eqref{eq:fi223} requires a bit more care. Again by applying the refined estimates \eqref{eq:RefinedWeightedLinfty} ($\tfrac{N-1}{2}\leq N-5$ suffices), we find
\bea\notag
\Big\|&\underdel_a \Big(Z^I \big( P^{\alpha\beta}\del_\alpha w\del_\beta w\big)\Big)\Big\|_{L^2_f(\Hcal_{s})}
\\
&\lesssim
\sum_{\substack{|I_1|\leq (N-1)/2\\ |I_2|\leq N-1}}  \Big(
	\big\|\underdel_a (Z^{I_1} \del w)\cdot Z^{I_2} \del w\big\|_{L^2_f(\Hcal_{s})} 
	+ \big\|Z^{I_1}\del w\cdot\underdel_a ( Z^{I_2}\del w)\big\|_{L^2_f(\Hcal_{s})} \Big)
\\
&\lesssim
\sum_{\substack{|I_1|\leq (N-1)/2\\ |I_2|\leq N-1}} 
	\|(t/s) (t-r)\underdel_a ( \del Z^{I_1} w)\|_{L^\infty(\Hcal_{s})} \big\|(s/t)(t-r)^{-1}\del Z^{I_2}w\big\|_{L^2_f(\Hcal_{s})} 
\\
&\quad+
	\sum_{\substack{|I_1|\leq (N-1)/2\\ |I_2|\leq N-1}} 
	\|(t-r) s^{-1}\del Z^{I_1}w\|_{L^\infty(\Hcal_{s})} \big\|(s/t)(t-r)^{-1}L_a ( \del Z^{I_2} w)\big\|_{L^2_f(\Hcal_{s})} 
\\
&\lesssim
	(C_1\eps)^2  \Big( \|(t/s)(t-r)  t^{-1}s^{-1}\|_{L^\infty(\Hcal_{s})}  + \|(t-r)s^{-1} \cdot  s^{-1}\|_{L^\infty(\Hcal_{s})}s^\delta  \Big)
\\
&\lesssim
	(C_1 \eps)^2 .
\eea
A similar argument shows that
\bea\notag
\Big\|(s/t) \del_t \big(Z^{I} \big( P^{\alpha\beta}\del_\alpha w\del_\beta w\big)\big)\Big\|_{L^2_f(\Hcal_{s})}
\lesssim (C_1 \eps)^2.
\eea
Thus we obtain, for any $|I|\leq N-1$,
\bea \notag
E_1\big( s,Z^{I} \big( P^{\alpha\beta}\del_\alpha w\del_\beta w\big) \big)^{1/2} \lesssim (C_1 \eps)^2,
\eea
and so the conclusion follows by combining this with  Proposition  \ref{prop:model1-343}. 
\end{proof}

\subsubsection{Wave component}
\begin{lemma}\label{lem:m1waveimplow}
For all multi-indices of order $|I|\leq N-1$, there exists a $C>0$ such that  
\be 
E (s, Z^I w)^{1/2}
\leq C C_1 \eps + C (C_1 \eps)^2 s^\delta.
\ee
\end{lemma}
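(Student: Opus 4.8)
The plan is to run a standard energy estimate for the wave equation \eqref{eq:model-1-wave} at orders $|I|+|J|\leq N-1$, applying Proposition \ref{prop:BasicEnergyEstimate} with $m=0$ to the commuted equation $-\Box(\del^I L^J w) = \del^I L^J(v^2)$. This gives
\be
E(s,\del^I L^J w)^{1/2} \leq E(s_0,\del^I L^J w)^{1/2} + \int_{s_0}^s \|\del^I L^J(v^2)\|_{L^2_f(\Hcal_\tau)}\,\di\tau,
\ee
so that, using the data bound \eqref{eq:m1BApre}, everything reduces to estimating $\|\del^I L^J(v^2)\|_{L^2_f(\Hcal_\tau)}$ with a decay rate that is integrable in $\tau$ (or at worst produces the $s^\delta$ loss stated). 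First I would expand $\del^I L^J(v^2)$ via the Leibniz rule into a sum of products $\del^{I_1}L^{J_1}v\cdot\del^{I_2}L^{J_2}v$ with $|I_1|+|J_1|+|I_2|+|J_2|\leq N-1$, and split into a low-high dichotomy: put the factor with at most $(N-1)/2$ derivatives in $L^\infty$ and the other in $L^2_f(\Hcal_s)$.

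The key point is the choice of which weights to carry. For the $L^\infty$ factor I would use the pointwise Klein-Gordon estimate \eqref{eq:m1pointwise-KG1}, valid since $\tfrac{N-1}{2}\leq N-3$ for $N\geq 10$, giving $|\del^{I_1}L^{J_1}v|\lesssim C_1\eps t^{-1}$. For the $L^2$ factor I would first observe that $\|\del^{I_2}L^{J_2}v\|_{L^2_f(\Hcal_s)}\lesssim C_1\eps$ for $|I_2|+|J_2|\leq N-1$ from the bootstrap \eqref{eq:BA-Easy} (this is the lower-order energy bound, not the top-order one, so no $s^{1+\delta}$ growth enters). Hence
\be
\|\del^I L^J(v^2)\|_{L^2_f(\Hcal_s)} \lesssim \|t^{-1}\|_{L^\infty(\Hcal_s)}\cdot C_1\eps \cdot C_1\eps \lesssim (C_1\eps)^2 s^{-1},
\ee
using $t\gtrsim s$ on $\Hcal_s$. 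Integrating $\int_{s_0}^s (C_1\eps)^2\tau^{-1}\,\di\tau \lesssim (C_1\eps)^2\log s \lesssim (C_1\eps)^2 s^\delta$ then yields the claim, since combined with $E(s_0,\del^I L^Jw)^{1/2}\lesssim C_0\eps\lesssim C_1\eps$ we get $E(s,\del^I L^J w)^{1/2}\leq CC_1\eps + C(C_1\eps)^2 s^\delta$.

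The main obstacle — really the only subtlety — is the loss of $s^{\delta}$: the nonlinearity $v^2$ sits exactly at the borderline of integrability (decay rate $s^{-1}$ in $L^2_f$), so the energy inequality cannot be closed with a bounded right-hand side and the logarithmic growth must be absorbed into the $s^\delta$ factor. One should double-check that no product term forces \emph{both} factors to exceed $(N-1)/2$ derivatives, but since $N-1 > 2\cdot\tfrac{N-1}{2}$ strictly fails only at equality and the Leibniz sum always leaves one factor below the halfway point, this is fine; and one should confirm that the commutator terms generated by moving $\del^I L^J$ through the wave operator are harmless, which follows from Lemma \ref{lem:est-comm} as these reduce to strictly lower-order quantities already controlled. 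No other term appears, so the estimate closes directly.
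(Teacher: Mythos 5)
Your proposal is correct and follows essentially the same route as the paper: apply Proposition \ref{prop:BasicEnergyEstimate} with $m=0$ to the commuted equation, split $\del^I L^J(v^2)$ low--high, estimate the low factor in $L^\infty$ by \eqref{eq:m1pointwise-KG1} and the high factor in $L^2_f(\Hcal_s)$ by the lower-order bootstrap bound, and integrate. The only cosmetic difference is that the paper allows the $L^2$ factor a $\tau^\delta$ growth and integrates $\tau^{-1+\delta}$, whereas you use the uniform bound and absorb the resulting logarithm into $s^\delta$; both give the stated estimate (and note that $\del^I L^J$ commutes exactly with $\Box$, so no commutator terms actually arise).
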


\begin{proof}
If $N$ is sufficiently large so that we can apply the estimates \eqref{eq:m1pointwise-KG1} (this requires $\frac{N-1}{2}\leq N-3$), then by the standard energy estimate of Proposition \ref{prop:BasicEnergyEstimate} with $m=0$, we have
\bea \notag
E(s, Z^I w)^{1/2} &\lesssim 
 \eps 
+ \int_{s_0}^s \| C_1 \eps t^{-1} \|_{L^\infty(\Hcal_{\tau})}\sum_{|I|\leq N-1} \| Z^I v\|_{L^2_f(\Hcal_{\tau})}
 \, \di \tau
\\
&\lesssim \eps
+ (C_1 \eps)^2 \int_{s_0}^s  \tau^{-1}  \, \di \tau
\lesssim \eps 
+ (C_1 \eps)^2 s^\delta.
\eea
The proof is finished.
\end{proof}

\subsection{Highest-order bootstraps}
We now look at the highest-order $L^2$ estimates, first for the Klein-Gordon component and then for the wave component. 
\begin{lemma}\label{lem:m1KGenergyimp}
There exists a $C>0$ such that  
\bea \notag
E_1 (s, Z^I v)^{1/2} & \leq C \eps + C(C_1 \eps)^2 s^{1+\delta},\quad |I|=N.
\eea
\end{lemma}

\begin{proof}
At this top order we study the original Klein-Gordon equation \eqref{eq:model-1-KG}.
Proposition \ref{prop:BasicEnergyEstimate} together with the refined decay estimates of \eqref{eq:RefinedWeightedLinfty} (and $\frac{N}{2}\leq N-5$) imply that
\bea\notag
E_1 (s, Z^I v)^{1/2}
& \lesssim
E_1 (s_0, Z^I v)^{1/2}
+ \sum_{\substack{|I_1|\leq N/2\\ |I_2|\leq N}} \sum_{\alpha, \beta} \int_{s_0}^s \big\|\del_\alpha Z^{I_1}w \cdot \del_\beta Z^{I_2} w \big\|_{L^2_f(\Hcal_{\tau})}\di \tau
\\
& \lesssim
\eps 
+ \sum_{\substack{|I_1|\leq N/2\\ |I_2|+\leq N}} \int_{s_0}^s \big\|(t-r) (t/s)(\del Z^{I_1}w)\|_{L^\infty(\Hcal_{\tau})} \big\| \frac{(s/t)}{(t-r)}( \del Z^{I_2} w )\big\|_{L^2_f(\Hcal_{\tau})}\di \tau
\\
&\lesssim
\eps + (C_1 \eps)^2 \int_{s_0}^s \tau^\delta \di \tau
\lesssim \eps + (C_1 \eps)^2 s^{1+\delta}.
\eea
The proof is complete.
\end{proof}

\begin{lemma}\label{lem:m1waveweighted}
There exists a constant $C>0$ such that  
\bea\notag
\big\| (t-r)^{-1} (s/t) \del_\alpha Z^I w \big\|_{L^2_f(\Hcal_s)}
&\leq C\eps + C(C_1 \eps)^{3/2}, \quad |I|\leq N-1,
\\
\big\| (t-r)^{-1} (s/t) \del_\alpha Z^I w \big\|_{L^2_f(\Hcal_s)}
&\leq C\eps + C(C_1 \eps)^{3/2} s^{\delta}, \quad |I|=N.
\eea
\end{lemma}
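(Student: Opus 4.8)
The plan is to apply the weighted energy estimate of Proposition~\ref{prop:GhostWeight} with $\gamma=2$ and $m=0$ to the wave equation $-\Box(\del^I L^J w) = \del^I L^J(v^2) + [\del^I L^J, \Box]w$, where the commutator term is handled by Lemma~\ref{lem:est-comm} in the usual way. The resulting inequality bounds $\big\|(t-|x|)^{-1}(s/t)\del_\alpha\del^I L^J w\big\|_{L^2_f(\Hcal_s)}^2$ (after noting that $\del_s$ and $\underdel_a$ together control all $(s/t)\del_\alpha$ via \eqref{deriv-identities}, up to the usual semi-hyperboloidal bookkeeping) by the data term, which is $O(\eps^2)$ by \eqref{eq:m1BApre}, plus
\[
\int_{s_0}^s \int_{\Hcal_\tau} (\tau/t)(t-r)^{-2}\, \del^I L^J(v^2)\, \del_t(\del^I L^J w)\,\di x\,\di\tau .
\]
The key is to estimate this spacetime integral. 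First I would write $(\tau/t)(t-r)^{-2}\del_t(\del^I L^J w) = (t-r)^{-1}\big[(\tau/t)(t-r)^{-1}\del_t\del^I L^J w\big]$ and pair the bracketed factor, in $L^2_f(\Hcal_\tau)$, against $(t-r)^{-1}\del^I L^J(v^2)$; by Cauchy--Schwarz the integrand is bounded by $\big\|(t-r)^{-1}(s/t)\del\del^I L^J w\big\|_{L^2_f(\Hcal_\tau)}\cdot\big\|(t-r)^{-1}\del^I L^J(v^2)\big\|_{L^2_f(\Hcal_\tau)}$. The first factor is exactly the quantity being bootstrapped, controlled by $C_1\eps$ (resp.\ $C_1\eps\tau^\delta$) on the relevant index range.

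The main work is to show $\big\|(t-r)^{-1}\del^I L^J(v^2)\big\|_{L^2_f(\Hcal_\tau)}\lesssim (C_1\eps)^2\tau^{-2}$ (or a slightly worse power at top order), so that the $\tau$-integral converges (or grows at most like $s^{2\delta}$, matching the claimed bound after taking square roots, with room to spare since $\delta\ll1$). For this I would split the Leibniz expansion of $\del^I L^J(v^2)$, putting the lower-order factor in $L^\infty$ and the higher-order factor in $L^2$. The $L^\infty$ factor is controlled by the pointwise Klein-Gordon estimate \eqref{eq:m1pointwise-KG1}, which gives $|\del^{I_1}L^{J_1}v|\lesssim C_1\eps t^{-1}$; crucially one of these $t^{-1}$ factors can be traded against the weight since $(t-r)^{-1}\lesssim t/s^2$ and $t^{-1}\cdot t/s^2 \sim s^{-2}$ on $\Hcal_\tau$, while the remaining undifferentiated $v$-factor in $L^2$ is controlled by $\|\del^{I_2}L^{J_2}v\|_{L^2_f(\Hcal_\tau)}\lesssim C_1\eps$ (resp.\ $C_1\eps\tau^{1+\delta}$ at top order) from the bootstrap assumptions \eqref{eq:BA-Easy}. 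Care is needed that the weight $(t-r)^{-1}$ is only harmless when absorbed this way; one should not try to put $(t-r)^{-1}v$ in $L^\infty$ at top order.

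I expect the main obstacle to be the top-order case $|I|+|J|=N$: there the $L^2$-factor costs $\tau^{1+\delta}$, so the integrand is of size $(C_1\eps)^2 \tau^\delta \cdot \tau^{-2}\cdot \tau^{1+\delta}\sim (C_1\eps)^2\tau^{-1+2\delta}$, whose time integral is $\sim (C_1\eps)^2 s^{2\delta}$; after the square root this is $(C_1\eps)s^\delta$, consistent with the claim provided one keeps the constants and the $C_1\eps$ vs $(C_1\eps)^{3/2}$ accounting honest (the extra half-power of $C_1\eps$ coming from Young's inequality applied to the data term plus the quadratic source, so that the final bound is $C\eps + C(C_1\eps)^{3/2}s^\delta$ rather than $C\eps+C(C_1\eps)^2 s^\delta$). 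A secondary technical point is justifying that Proposition~\ref{prop:GhostWeight} with $\gamma=2$ indeed controls the full $(s/t)\del_\alpha$ derivatives and not merely $\del_s$ and $\underdel_a$; this follows from \eqref{deriv-identities} together with the fact that the transition coefficients between the frames are bounded in $\Kcal$, but it should be stated explicitly.
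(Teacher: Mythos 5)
Your proposal follows essentially the same route as the paper: Proposition~\ref{prop:GhostWeight} with $m=0$, $\gamma=2$, Cauchy--Schwarz to split off the bootstrapped factor $\|(s/t)(t-r)^{-1}\del_t\del^I L^J w\|_{L^2_f(\Hcal_\tau)}$, absorption of the weight via $(t-r)^{-1}t^{-1}\lesssim s^{-2}$ against the pointwise bound \eqref{eq:m1pointwise-KG1}, and the top-order $L^2$ bound $\|\del^{I_2}L^{J_2}v\|_{L^2_f(\Hcal_\tau)}\lesssim C_1\eps\,\tau^{1+\delta}$; this is exactly the paper's argument and it closes. Two small corrections to your bookkeeping: the integrand carries \emph{three} factors of $C_1\eps$ (weighted wave norm, pointwise KG bound, KG $L^2$ norm), so the squared norm is bounded by $C\eps^2+C(C_1\eps)^3 s^{2\delta}$ (resp.\ $C\eps^2+C(C_1\eps)^3$ at lower order) and the exponent $3/2$ in the statement comes simply from taking the square root of $(C_1\eps)^3$, not from Young's inequality applied to the data term; also, since $\del_\alpha$ and $L_a$ commute with the flat d'Alembertian, the commutator term $[\del^I L^J,\Box]w$ you mention is identically zero, so no appeal to Lemma~\ref{lem:est-comm} is needed there.
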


\begin{proof}
Using \eqref{eq:m1pointwise-KG1}, we see that, for $|I|\leq N-3$,
\be \notag
\big| (t-r)^{-1} Z^I v \big|
\lesssim C_1 \eps \frac{t+r}{s^2} t^{-1}
\lesssim C_1 \eps s^{-2}.
\ee
So, provided $\frac{N}{2}\leq N-3$ in order to apply the estimates \eqref{eq:m1pointwise-KG1}, Proposition \ref{prop:GhostWeight} with $m=0$ and $\gamma = 2$ gives, for $|I|=N$,  
\begin{align*}
&\big\| (t-r)^{-1} (s/t) \del_\alpha Z^I w \big\|_{L^2_f(\Hcal_s)}^2
\lesssim
 \eps^2 + \sum_{\substack{|I_1|\leq N/2\\ |I_2|\leq N}} \int_{s_0}^s \int_{\Hcal_{\tau}}\left| \frac{(\tau/t)}{(t-r)^2} (Z^{I_1} v) (Z^{I_2} v) \del_t Z^I w\right| \di x \di \tau
\\
&\lesssim
 \eps^2 + \sum_{\substack{|I_1|\leq N/2\\ |I_2|\leq N}} \int_{s_0}^s \Big\| (t-r)^{-1} (Z^{I_1} v) (Z^{I_2} v)  \Big\|_{L^2_f(\Hcal_\tau)} 
	\Big\|\frac{\tau/t}{(t-r)} \del_t Z^I w \Big\|_{L^2_f(\Hcal_{\tau})} \di \tau
\\
&\lesssim
 \eps^2 + \sum_{|I_2|\leq N} \int_{s_0}^s C_1 \eps \tau^{-2} \| Z^{I_2} v \|_{L^2_f(\Hcal_\tau)} 
	\Big\|\frac{\tau/t}{(t-r)} \del_t Z^I w \Big\|_{L^2_f(\Hcal_{\tau})} \di \tau
\\
&\lesssim
 \eps^2 + (C_1 \eps)^3 \int_{s_0}^s \tau^{-2}\tau^{1+\delta}\tau^\delta \di \tau
\lesssim
\eps^2 + (C_1 \eps)^3 s^{2\delta}.
\end{align*}

Repeating the argument for $|I|\leq N-1$ we find
\bea\notag
&\big\| (t-r)^{-1} (s/t) \del_\alpha Z^I w \big\|_{L^2_f(\Hcal_s)}^2
\\
&\lesssim
 \eps^2 
+ \sum_{\substack{|I_1|\leq \frac{N-1}{2}\\ |I_2|\leq N-1}} \int_{s_0}^s \Big\| (t-r)^{-1} (Z^{I_1} v) (Z^{I_2} v)  \Big\|_{L^2_f(\Hcal_\tau)} 
	\Big\|\frac{\tau/t}{(t-r)} \del_t Z^I w \Big\|_{L^2_f(\Hcal_{\tau})} \di \tau
\\
&\lesssim
 \eps^2 + (C_1 \eps)^3 \int_{s_0}^s \tau^{-2} \di \tau
\lesssim
\eps^2 + (C_1 \eps)^3.
\eea
The proof is done.
\end{proof}

\begin{proof}[Proof of Theorem \ref{thm1}.]
By bringing together the results of Corollary \ref{corol:m1impKGlow} and Lemmas \ref{lem:m1waveimplow}, \ref{lem:m1KGenergyimp}, \ref{lem:m1waveweighted}, we see that for a fixed $0<\delta\ll1$ there exists an $N\in\mathbb{N}$ (in fact $N \geq 10$ is imposed by Lemma \ref{lem:m1KGenergyimp}) and an $\eps_0>0$ sufficiently small that for all $0<\eps\leq \eps_0$ we have
\begin{align*}
s^{-\delta}E(s, Z^I w)^{1/2} + \big\|  (s/t) (t-r)^{-1}\del Z^I w \big\|_{L^2_f(\Hcal_s)} + E_1 (s, Z^I v)^{1/2}
&\leq \tfrac12 C_1 \eps ,
\quad
&|I| &\leq N-1,
\\
s^{-\delta} \big\|  (s/t) (t-r)^{-1}\del Z^I w \big\|_{L^2_f(\Hcal_s)} + s^{-(1+\delta)}E_1 (s, Z^I v)^{1/2}
&\leq \tfrac12 C_1 \eps ,
\quad
&|I| &= N.
\end{align*}
We can now bring together all the components of the bootstrap argument. Initial data posed on the hypersurface $\{t_0=0\}$, localised in the unit ball $\{x\in\RR^2:r\leq 1\}$ and satisfying \eqref{eq:thm1data} can be developed as a solution of \eqref{eq:model-1} to the initial hyperboloid $\Hcal_{s_0=2}$ with the smallness conserved via the bound \eqref{eq:m1BApre}. Next, by classical local existence results for quasilinear hyperbolic PDEs, the bounds \eqref{eq:BA-Easy} hold whenever the solution exists. Clearly $s^*>s_0$ and, moreover, if $s^*<+\infty$ then one of the inequalities in \eqref{eq:BA-Easy} must be an equality. However we see by choosing $C_1$ sufficiently large and $\eps_0$ sufficiently small, the bounds \eqref{eq:BA-Easy} are in fact refined. This then implies that we must have $s^*=+\infty$. 
\end{proof}
\section{Model II}\label{sec:Model2}
The second model reads 
\begin{subequations}
\begin{align}
-\Box \tw &= P^\alpha \del_\alpha (\tv^2), \label{eq:model-2-wave}
\\
-\Box \tv + \tv &= \tw^2, \label{eq:model-2-KG}
\end{align}
\end{subequations}
with initial data $\big(\tw, \del_t \tw, \tv, \del_t \tv \big) (t_0) = (\tw_0, \tw_1, \tv_0, \tv_1)$ and where $P^\alpha$ are arbitrary constants.

\paragraph{Novelties in the proof.} 
Similar to Model I, one nonlinearity is at the borderline of integrability and the other strictly below the borderline of integrability. 
We aim to use the transformation 
\be  \label{intro:m2transfV}
\tV = \tv-\tw^2
\ee 
to improve our control on $\tv$ and significantly, unlike in Model I, we can use this transformation even at the highest order energies. To control the energies of $\tv$ in terms of those of $\tV$ however we need to control the error term $\tw^2$ in $L^2$.  
One way to obtain robust $L^2$ control of an undifferentiated wave field is via a conformal energy estimate. In order to avoid uncontrollable growth in the conformal energy estimate, we use the crucial total-derivative structure in the wave nonlinearity which allows us to perform a decomposition, due to Katayama \cite{Katayama12a}, of the form
\be \label{intro:m2transfW}
\tw=\tWz+P^\alpha \del_\alpha \tW.
\ee
In particular, $\tWz$ satisfies a homogeneous wave equation with initial data $(\tw_0, \tw_1)$, and thus good conformal energy estimates, while $\tW$ satisfies an inhomogeneous wave equation
\be\notag
-\Box \tW = \tv^2
\ee
with zero initial data. By making use of appropriate $(t-r)$-weighted energy estimates, the above transformations and decompositions we can close our bootstrap argument. 

\subsection{The bootstraps and preliminary estimates}
Fix $N\in\mathbb{Z}$ a large integer. By the definition of the hyperboloidal energy functional and smallness of the data from \eqref{eq:thm2data}, there exists a constant $C_0>0$ such that on the initial hyperboloid $\Hcal_{s_0}$ the following energy bounds hold for all $|I|\leq N$:
\bea\label{eq:m2BApre}
E(s_0, Z^I w)^{1/2}  + E_1 (s_0, Z^I v)^{1/2} \leq C_0 \eps. 
\eea
Next we fix $\delta, \eps, C_1$ positive constants such that $\delta\ll 1$.  \underline{For the rest of section \ref{sec:Model2}} we assume, without restating the fact, that on a hyperbolic time interval $[s_0, s^*)$ the following bootstrap assumptions hold 
\begin{subequations} \label{eq:BA-2}
\begin{align}
E(s, Z^I \tw)^{1/2} + E_1 (s, Z^I \tv)^{1/2}  &\leq C_1 \eps s^\delta,
\qquad
&|I| &\leq N, \label{eq:BA-2-high}
\\
s (t-r)^{-2\delta} |Z^I \tw| + s |\del Z^I \tw| + t |Z^I \tv|
&\leq C_1 \eps,
\qquad
&|I| &\leq N-3, \label{eq:BA-2-low}
\end{align}
\end{subequations}
where we define
\be 
s^* \define \sup_{s\geq s_0} \{\eqref{eq:BA-2} \text{ hold on }  [s_0, s)\}.
\ee
Taking $C_1\gg C_0$ we see, by continuity of the above $L^2$ functionals, that $s^*>s_0$. 
We first assume $s^*<+\infty$ and then derive a contradiction to assert that $s^*=+\infty$.

The bootstrap assumptions \eqref{eq:BA-2} and the definition \eqref{eq:2energy} imply the following 
\bea \label{eq:BA2-implic}
\| Z^I \tv\|_{L^2_f(\Hcal_s)} + \| (s/t) \del Z^I \tv\|_{L^2_f(\Hcal_s)}
\leq &C_1 \eps s^\delta,
\qquad
|I|\leq N.
\eea
Unlike in Model I, the bootstrap assumptions \eqref{eq:BA-Easy} and Sobolev estimates of Lemma \ref{lem:sobolev} do not give sharp decay estimates for the Klein-Gordon field. Nevertheless we have
\begin{subequations}\label{eq:M2-pointwise-prelim}
\begin{align} 
|(s/t)\del Z^I \tv| +| Z^I \tv| 
\lesssim &C_1 \eps t^{-1}s^\delta,
\qquad
|I| \leq N-2.\label{eq:M2-pointwise-prelim1}
\\
|\del Z^I \tw| 
\lesssim &C_1 \eps s^{-1} s^{\delta},
\qquad
|I| \leq N-2.\label{eq:M2-pointwise-prelim3}
\end{align}
\end{subequations}
Note also that by the commutator estimate \eqref{eq:est-cmt1} and \eqref{eq:BA-2-low}, for $|I|\leq N-4$,
\bea \label{eq:M2-pointwise-prelim2}
|\underdel_a Z^I \tw| = t^{-1} |L_a Z^I \tw|
\lesssim t^{-1} \sum_{|I|\leq N-3}|Z^I \tw|
\lesssim C_1 \eps t^{-1} s^{-1} (t-r)^{2\delta}.
\eea

\subsection{Transformations}
We start by performing a transformation in the Klein-Gordon variable. 
That is, the variable $\tV\define\tv-\tw^2$ satisfies the PDE
\be\label{eq:model2-KGtransf1}
-\Box \tV+\tV=2Q_0( \tw, \tw) -2\tw P^\alpha\del_\alpha(\tv^2).
\ee
We can then perform one more transformation to $\hV \define \tV-Q_0( \tw, \tw)$. The variable $\hV$ satisfies the PDE
\bea \label{eq:model2-KGtransf2}
-\Box \hV+\hV&=\eta^{\gamma\lambda}\del_\gamma (\eta^{\alpha\beta} \del_\alpha \tw)\del_\lambda ( \del_\beta \tw) -2\tw P^\alpha\del_\alpha(\tv^2)-2\eta^{\alpha\gamma}\del_\alpha (P^\beta\del_\beta(\tv^2))\del_\gamma \tw
\\
&=2Q_0( \del_\alpha \tw, \del^\alpha \tw) -4\tw \tv P^\alpha\del_\alpha \tv-4P^\beta(\del_\alpha \tv \del_\beta \tv + \tv \del_\alpha \del_\beta \tv) \del^\alpha \tw.
\eea

Next we turn to decompositions for the wave variable. 
Following Katayama \cite{Katayama12a}, we introduce the new wave variables $\tWz$ and $\tW$ satisfying
\bea \label{eq:model2-wavetransf1}
-\Box \tWz = 0,
\qquad
\big(\tWz, \del_t \tWz) (s_0) = (\tw, \del_t\tw)(s_0),
\eea
\bea  \label{eq:model2-wavetransf2}
-\Box \tW = \tv^2,
\qquad
\big(\tW, \del_t \tW) (s_0) = (0, 0).
\eea
Our variable $\tw$ is then given by $\tw=\tWz+P^\alpha \del_\alpha \tW$. In summary, the transformations and decompositions are given by
\bel{eq:model2-tr}
\tV=\tv-\tw^2,\quad
\hV = \tV-2Q_0( \tw, \tw),\quad
\tw=\tWz+P^\alpha \del_\alpha \tW.
\ee

\subsubsection{Consequences of the transformations}
Firstly, since $Z^I \tWz$ is a solution to the homogeneous wave equation for all $|I|\leq N$, we have by Kirchoff's formula (or Lemma \ref{lem:supwave}) the decay estimate
\bea \label{eq:m2conf102}
|Z^I \tWz|
&\lesssim \eps s^{-1}, \quad |I|\leq N-2.
\eea

\begin{lemma}
For all multi-indices of order  $|I|\leq N$, we have
\bea \label{eq:m2conf103}
\big\| (s / t) Z^I \tw \big\|_{L^2_f(\Hcal_s)} 
&\lesssim C_1 \eps s^\delta.
\eea
\end{lemma}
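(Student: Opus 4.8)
The plan is to use the Katayama-type decomposition $\tw=\tWz+P^\alpha\del_\alpha\tW$ from \eqref{eq:model2-tr3} and to estimate the two summands by entirely different mechanisms. The key point is that one cannot apply a conformal energy estimate directly to $\tw$: its source $P^\alpha\del_\alpha(\tv^2)$ carries a weight $\tau$ that would produce growth like $s^{1+\delta}$ in $E_{con}$. The decomposition isolates a genuinely homogeneous wave $\tWz$, to which the conformal energy of Proposition \ref{lem:ConformalEnergy} applies cleanly, from $\del_\alpha\tW$, whose extra derivative lets us fall back on the ordinary hyperboloidal energy of $\tW$ (whose source has no such weight after one derivative is extracted).

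First I would treat $\tWz$. Since $\del_\alpha$ and $L_a$ commute with $\Box$, each $\del^IL^J\tWz$ solves the homogeneous wave equation and, by finite speed of propagation, vanishes near $\del\Kcal$; its data on $\Hcal_{s_0}$ is bounded by $\lesssim\eps$ in the relevant Sobolev norm (using $\del_t^2\tWz=\Delta\tWz$ to trade higher $\del_t$-data for spatial derivatives, which is where $\tw_0\in H^{N+1}$ is needed to reach $|I|+|J|=N$). Hence \eqref{eq:con-estimate} gives $E_{con}(s,\del^IL^J\tWz)^{1/2}\leq E_{con}(s_0,\del^IL^J\tWz)^{1/2}\lesssim\eps$ for all $|I|+|J|\leq N$, and then \eqref{eq:l2type-wave} yields
\[
\big\|(s/t)\del^IL^J\tWz\big\|_{L^2_f(\Hcal_s)}\lesssim\eps+\int_{s_0}^s\tau^{-1}E_{con}(\tau,\del^IL^J\tWz)^{1/2}\,\di\tau\lesssim\eps\log s\lesssim\eps\,s^\delta .
\]

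Next I would treat $P^\alpha\del_\alpha\tW$. By the commutator estimate \eqref{eq:est-cmt1}, $|\del^IL^J(\del_\alpha\tW)|\lesssim\sum_{|I'|+|J'|\leq N}\sum_\gamma|\del_\gamma\del^{I'}L^{J'}\tW|$, and since the massless energy functional \eqref{eq:2energy} controls $\|(s/t)\del_\gamma\del^{I'}L^{J'}\tW\|_{L^2_f(\Hcal_s)}\lesssim E(s,\del^{I'}L^{J'}\tW)^{1/2}$, it suffices to bound $E(s,\del^{I'}L^{J'}\tW)^{1/2}$ for $|I'|+|J'|\leq N$. Here $\del^{I'}L^{J'}\tW$ solves $-\Box(\del^{I'}L^{J'}\tW)=\del^{I'}L^{J'}(\tv^2)$ with vanishing data on $\Hcal_{s_0}$, and vanishes near $\del\Kcal$, so Proposition \ref{prop:BasicEnergyEstimate} gives $E(s,\del^{I'}L^{J'}\tW)^{1/2}\leq\int_{s_0}^s\|\del^{I'}L^{J'}(\tv^2)\|_{L^2_f(\Hcal_\tau)}\,\di\tau$. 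Expanding $\tv^2$ by the Leibniz rule, placing the factor with at most $N/2\leq N-3$ derivatives (which holds since $N\geq 8$) in $L^\infty$ via \eqref{eq:BA-2-5} and the remaining factor in $L^2$ via \eqref{eq:BA2-implic}, we obtain $\|\del^{I'}L^{J'}(\tv^2)\|_{L^2_f(\Hcal_\tau)}\lesssim(C_1\eps)^2\tau^{-1+\delta}$, hence $E(s,\del^{I'}L^{J'}\tW)^{1/2}\lesssim(C_1\eps)^2 s^\delta$ for $\delta>0$. Combining the two pieces and choosing $\eps_0$ small enough that $(C_1\eps)^2\leq C_1\eps$ gives $\|(s/t)\del^IL^J\tw\|_{L^2_f(\Hcal_s)}\lesssim\eps\,s^\delta+(C_1\eps)^2 s^\delta\lesssim C_1\eps\,s^\delta$, as required.

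The main obstacle is the first step: obtaining weighted $L^2$ control of the \emph{undifferentiated} homogeneous field $\tWz$ (the ordinary energy would only control its derivatives), for which the conformal energy of Proposition \ref{lem:ConformalEnergy} is essential, together with the bookkeeping needed to see that the conformal energy of the data stays $\lesssim\eps$ even at the top order $N$. The remaining steps are routine Leibniz and commutator estimates that close comfortably for $N\geq 8$.
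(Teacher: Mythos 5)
Your proposal is correct and follows essentially the same route as the paper: the Katayama decomposition $\tw=\tWz+P^\alpha\del_\alpha\tW$, the conformal energy estimate \eqref{eq:con-estimate} plus \eqref{eq:l2type-wave} for the homogeneous piece $\tWz$ (giving $\eps\log s\lesssim\eps s^\delta$), and the standard hyperboloidal energy estimate for $\del^IL^J\tW$ with the source $\del^IL^J(\tv^2)$ split by Leibniz, the low-order factor in $L^\infty$ via \eqref{eq:BA-2-5} and the high-order factor in $L^2$ via the bootstrap, yielding $(C_1\eps)^2 s^\delta$. The only additions beyond the paper's argument are your explicit remarks on the data bookkeeping for $E_{con}(s_0,\del^IL^J\tWz)$ and on commuting $\del_\alpha$ past $L^J$, both of which are consistent with what the paper implicitly uses.
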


\begin{proof}
From the conformal energy estimate of Proposition \ref{lem:ConformalEnergy} we have
\be \notag
E_{con} (s,Z^I \tWz)^{1/2}
\leq 
E_{con} (s_0, Z^I \tWz)^{1/2}
\leq C \eps.
\ee
Furthermore by estimate \eqref{eq:l2type-wave} from Proposition \ref{lem:ConformalEnergy} we have
\bea \label{eq:m2conf101}
\big\| (s / t) Z^I \tWz \big\|_{L^2_f(\Hcal_s)} 
&\lesssim 
\big\| (s / t) Z^I \tWz \big\|_{L^2_f(\Hcal_{s_0})} + \int_{s_0}^s \tau^{-1}
E_{con} (s_0, Z^I \tWz)^{1/2} \di\tau
\\
&\lesssim \eps
+ \eps \int_{s_0}^s \tau^{-1} \di \tau
\lesssim \eps s^\delta.
\eea
Next, if $N$ is sufficiently large to apply the sharp pointwise estimate \eqref{eq:BA-2-low} ($\frac{N}{2}\leq N-3$ suffices), then by the standard hyperboloidal energy estimate of Proposition \ref{prop:BasicEnergyEstimate} with $m=0$, we have
\bea \label{eq:222}
E(s, Z^I \tW)^{1/2} 
&\leq E(s_0, Z^I \tW)^{1/2} +\int_{s_0}^s \|Z^I \tv^2
\|_{L^2_f(\Hcal_{\tau})}\di \tau
\\
&\lesssim \eps
+ \sum_{\substack{|I_1|\leq N/2,\, |I_2|\leq N}}\int_{s_0}^s\|Z^{I_1}\tv\|_{L^\infty(\Hcal_{\tau})} \|Z^{I_2} \tv\|_{L^2_f(\Hcal_{\tau})}\di\tau
\\
&\lesssim \eps + \int_{s_0}^s (C_1\eps \tau^{-1}) (C_1 \eps \tau^\delta) \di\tau
\lesssim \eps + (C_1\eps)^2 s^\delta.
\eea

We thus obtain the following control on the undifferentiated wave component 
\bea \label{eq:583}
\|(s/t) Z^I \tw \|_{L^2_f(\Hcal_s)} 
&\lesssim  \|(s/t) Z^I \tWz \|_{L^2_f(\Hcal_s)} + \sum_\alpha\|(s/t) Z^I \del_\alpha \tW \|_{L^2_f(\Hcal_s)} 
\\
&\lesssim \eps s^\delta + \sum_{|J|\leq |I|, \, \alpha} \|(s/t) \del_\alpha Z^J  \tW \|_{L^2_f(\Hcal_s)} 
\lesssim C_1 \eps s^\delta.
\eea
The proof is complete.
\end{proof}

\subsection{Refined pointwise estimates}
\subsubsection{The Klein-Gordon component}
In the case $|I|\leq N-1$ we can use the transformation $\hV = \tV-Q_0( \tw,  \tw)$ with associated PDE \eqref{eq:model2-KGtransf2} to obtain uniform energy estimates for $\hV$, which we can use to recover one sharp pointwise estimate in \eqref{eq:BA-2-low}. 

\begin{lemma}\label{lemma:M2nulllest}
There exists a constant $C>0$ such that
\bea \notag
\| Z^I Q_0( \del_\alpha \tw, \del^\alpha \tw)\|_{L^2_f(\Hcal_{s})}
\leq C(C_1 \eps)^2 s^{-2+3\delta},
\qquad
|I|\leq N-1.
\eea
\end{lemma}

\begin{proof}
The proof works in a way similar to Proposition \ref{prop:model1-343} for Model I. We omit the details.

\end{proof}

\begin{proposition}\label{prop:m2unifhV}
For all multi-indices of order $|I|+|J|\leq N-1$ there exists a constant $C>0$ such that
\bea \notag
E_1(s, Z^I \hV)^{1/2} \leq C\eps + C(C_1\eps)^2.
\eea
\end{proposition}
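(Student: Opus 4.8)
The plan is to run the same kind of argument as in Proposition \ref{prop:model1-343}: apply the basic hyperboloidal energy estimate of Proposition \ref{prop:BasicEnergyEstimate} with $m=1$ to the transformed equation \eqref{eq:model2-KGtransf2} for $\del^I L^J \hV$, and then bound the $L^2_f(\Hcal_\tau)$ norm of $\del^I L^J$ applied to each of the four source terms
$$Q_0(\del_\alpha\tw,\del^\alpha\tw),\qquad \tw\,\tv\,\del_\alpha\tv,\qquad \del_\alpha\tv\,\del_\beta\tv\,\del^\alpha\tw,\qquad \tv\,\del_\alpha\del_\beta\tv\,\del^\alpha\tw,$$
showing that each is integrable in $\tau$ with integral $\lesssim(C_1\eps)^2$. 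The data term $E_1(s_0,\del^I L^J\hV)^{1/2}\le C\eps$ follows from the smallness assumption \eqref{eq:thm2data} together with the transformations \eqref{eq:model2-tr}, since $\hV$ differs from $\tv$ only by quadratic expressions in $\tw$.

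For the null term I would simply quote Lemma \ref{lemma:M2nulllest}, which gives $\|\del^I L^J Q_0(\del_\alpha\tw,\del^\alpha\tw)\|_{L^2_f(\Hcal_s)}\lesssim(C_1\eps)^2 s^{-2+3\delta}$ and hence a time integral bounded by $(C_1\eps)^2$ once $3\delta<1$. For the three cubic terms I would distribute the $\del^I L^J$ derivatives via the commutator estimates of Lemma \ref{lem:est-comm}, and in each resulting product place the factor carrying the most derivatives in $L^2_f(\Hcal_s)$ and the remaining low-order factors in $L^\infty$. The $L^2$ inputs are $\|\del^I L^J\tv\|_{L^2_f}\lesssim C_1\eps s^\delta$, $\|(s/t)\del\del^I L^J\tv\|_{L^2_f}\lesssim C_1\eps s^\delta$ from \eqref{eq:BA2-implic}, and $\|(s/t)\del\del^I L^J\tw\|_{L^2_f}\lesssim C_1\eps s^\delta$ from \eqref{eq:BA-2-1}; the $L^\infty$ inputs are the bootstrap bounds \eqref{eq:BA-2-3}, \eqref{eq:BA-2-4}, \eqref{eq:BA-2-5}, using also $(t-|x|)\le s$ on $\Hcal_s\cap\Kcal$ to replace the weight $(t-|x|)^{2\delta}$ in \eqref{eq:BA-2-3} by $s^{2\delta}$. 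The one nontrivial point of the bookkeeping is that whenever a factor $(t/s)$ is needed to compensate the weight $(s/t)$ sitting on the $L^2$ factor, it must be loaded onto a Klein-Gordon factor, which decays like $t^{-1}$ so that $(t/s)\,t^{-1}=s^{-1}$ stays bounded, and never onto an undifferentiated wave factor $\tw$, whose only pointwise bound \eqref{eq:BA-2-3} decays like $s^{-1}$ and cannot absorb the growing weight $(t/s)$. With this choice each cubic source term is bounded in $L^2_f(\Hcal_s)$ by $(C_1\eps)^3\,s^{-2+c\delta}$ for some fixed constant $c$, which is again integrable in $\tau$ with integral $\lesssim(C_1\eps)^3\le(C_1\eps)^2$ for $\eps$ small.

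The slightly delicate case is the term $\tv\,\del_\alpha\del_\beta\tv\,\del^\alpha\tw$, in which the second-order factor $\del_\alpha\del_\beta\tv$ may receive up to $N-1$ extra derivatives, i.e.\ $N+1$ derivatives in total on $\tv$. Here I would rewrite $\del^{I'}L^{J'}\del_\alpha\del_\beta\tv=\del_\alpha\big(\del^{I''}L^{J'}\tv\big)$ modulo lower-order commutators, with $|I''|+|J'|\le N$, so that its $(s/t)$-weighted $L^2$ norm is still controlled by $E_1(s,\del^{I''}L^{J'}\tv)^{1/2}\lesssim C_1\eps s^\delta$; this is where the restriction to $|I|+|J|\le N-1$ (rather than $N$) and the assumption $N\ge 8$ come in. Summing all four contributions into the energy inequality and integrating then yields $E_1(s,\del^I L^J\hV)^{1/2}\le C\eps+C(C_1\eps)^2$. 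I expect the main difficulty to be purely organizational — tracking the weights $(s/t)$, $(t/s)$, $t^{-1}$, $s^{-1}$ and $(t-|x|)^{2\delta}$ through the Leibniz expansions so that every product is $\tau$-integrable — with the single genuine structural constraint being the rule above about never placing a $(t/s)$ weight on an undifferentiated wave factor.
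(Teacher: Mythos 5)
Your overall strategy is the same as the paper's: energy estimate for \eqref{eq:model2-KGtransf2}, the null term handled by Lemma \ref{lemma:M2nulllest}, and the three cubic terms by Leibniz splitting with the highest-order factor in $L^2_f(\Hcal_s)$ and the low-order factors in $L^\infty$. However, there is a genuine gap in your list of $L^2$ inputs. The source term $\tw\,\tv\,P^\alpha\del_\alpha\tv$ contains the \emph{undifferentiated} wave factor $\tw$, and when $\del^I L^J$ with $|I|+|J|\leq N-1$ is distributed so that more than $N-3$ derivatives land on $\tw$ (in particular when they are all Lorentz boosts), the pointwise bootstrap \eqref{eq:BA-2-3} is unavailable and $\del^{I_1}L^{J_1}\tw$ \emph{must} be placed in $L^2_f(\Hcal_s)$. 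None of the bounds you quote covers this: \eqref{eq:BA-2-1} only controls $(s/t)\del\,\del^I L^J\tw$ and, the wave energy being massless, gives no control whatsoever of $\|(s/t)\,\del^I L^J\tw\|_{L^2_f(\Hcal_s)}$ itself. This is not ``purely organizational'' bookkeeping; it is precisely the structural obstruction that forces the paper to introduce the Katayama decomposition $\tw=\tWz+P^\alpha\del_\alpha\tW$ of \eqref{eq:model2-tr3}, apply the conformal energy estimate of Proposition \ref{lem:ConformalEnergy} to the homogeneous part $\tWz$ and the standard energy to $\tW$, and thereby obtain the bound $\|(s/t)\del^I L^J\tw\|_{L^2_f(\Hcal_s)}\lesssim C_1\eps\, s^\delta$ of \eqref{eq:m2conf103} (equivalently \eqref{eq:583}), which is exactly the estimate used in the paper's treatment of $\|\del^I L^J(\tw\,\tv\,\del\tv)\|_{L^2_f(\Hcal_s)}$. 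Without quoting this (or reproving it), your argument cannot close for that term.

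The remaining ingredients of your proposal are sound and match the paper: the data term is indeed $O(\eps)$; your rule of loading $(t/s)$ weights onto Klein--Gordon factors is the correct bookkeeping (note that for differentiated Klein--Gordon factors of order $\leq N-3$ the sharp bound \eqref{eq:BA-2-5} applies with the plain derivative absorbed into the multi-index, which is what makes $(t/s)\,|\del\tv|^2\lesssim (C_1\eps)^2 s^{-2}$ work); and your handling of the top-order factor $\del^{I'}L^{J'}\del_\alpha\del_\beta\tv$ by commuting one derivative out and appealing to $E_1(s,\del^{I''}L^{J'}\tv)^{1/2}$ with $|I''|+|J'|\leq N$ is exactly what is needed. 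But as it stands the proposal is incomplete until you add the undifferentiated-wave $L^2$ estimate \eqref{eq:583} to your toolkit, and you should be aware that this estimate is itself one of the main new inputs of the Model II argument rather than a routine consequence of the bootstrap assumptions \eqref{eq:BA-2}.
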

\begin{proof}
By the energy estimate of Proposition \ref{prop:BasicEnergyEstimate}, we have
\bea\label{eq:377}
E_1(s, Z^I \hV)^{1/2}
&\leq 
E_1(s_0, Z^I \hV)^{1/2}
\\
&
+ \int_{s_0}^s \Big(
\| Z^I Q_0( \del_\alpha \tw, \del^\alpha \tw)\|_{L^2_f(\Hcal_{\tau})}
+ \| Z^I (\tw \cdot\tv \cdot\del \tv)\|_{L^2_f(\Hcal_{\tau})}
\\
&\quad\qquad + \| Z^I (\del \tv \cdot \del \tv \cdot \del \tw)\|_{L^2_f(\Hcal_{\tau})} + \| Z^I (\tv \cdot \del \del \tv\cdot \del \tw)\|_{L^2_f(\Hcal_{\tau})}\Big)
 \, \di \tau.
\eea
We treat the four terms under the integral of \eqref{eq:377} separately. The first term is covered by Lemma \ref{lemma:M2nulllest}. 
For the second term under the integral in \eqref{eq:377} we use \eqref{eq:583}, \eqref{eq:BA-2-low}, \eqref{eq:M2-pointwise-prelim3} (this requires $\tfrac{N-1}{2}\leq N-3$), to obtain
\begin{align*}
\|Z^I(\tw \tv \del\tv )\|_{L^2_f(\Hcal_{s})} 
&\lesssim
\sum_{\substack{|I_2|+|I_3|\leq (N-1)/2  \\ |I_1|\leq N}} 
\|(t/s)Z^{I_2} \tv \cdot Z^{I_3} \del\tv\|_{L^\infty(\Hcal_{s})} \|(s/t)Z^{I_1} \tw\|_{L^2_f(\Hcal_{s})} 
\\
&\quad+\sum_{\substack{|I_1|+|I_3|\leq (N-1)/2  \\ |I_2|\leq N}}
\|Z^{I_1} \tw \cdot Z^{I_3} \del\tv\|_{L^\infty(\Hcal_{s})} \| Z^{I_2} \tv \|_{L^2_f(\Hcal_{s})} 
\\
&\quad+\sum_{\substack{|I_1|+|I_2|\leq (N-1)/2 \\ |I_3|\leq N}}
\|(t/s) Z^{I_1} \tw \cdot Z^{I_2} \tv \|_{L^\infty(\Hcal_{s})} \| (s/t) Z^{I_3}  \del\tv \|_{L^2_f(\Hcal_{s})} 
\\
&\lesssim
	(C_1\eps)^3 s^{-2+4\delta}.
\end{align*}

The third and the fourth terms under the integral in \eqref{eq:377} are even easier to estimate with the same bound as the second term under the integral in \eqref{eq:377} . So we do not provide further details. 

Finally, requiring $\delta<1/4$, we find
\bea\notag
E_1(s, Z^I \hV)^{1/2}
&\lesssim 
\eps
+ \int_{s_0}^s (C_1\eps)^3 \tau^{-2+4\delta} \di \tau
\lesssim \eps + (C_1\eps)^2.
\eea

The proof is finished.
\end{proof}

\begin{corollary}\label{m2:refinedpointKG}
There exists a constant $C>0$ such that
\bea\notag
|Z^I \tv| \leq &C \big(\eps + (C_1 \eps)^2\big)t^{-1},
\qquad
|I|\leq N-3.
\eea
\end{corollary}

\begin{proof}
We first combine the Sobolev estimate of Lemma \ref{lem:sobolev} with the uniform energy estimate of Proposition \ref{prop:m2unifhV} to obtain
\bea\notag
|Z^I \hV| \lesssim \big(\eps + (C_1 \eps)^2 \big) t^{-1}, \quad |I|\leq N-3.
\eea 
Then by the transformation identity \eqref{eq:model2-tr}, commutator estimate \eqref{eq:est-cmt1}, pointwise wave estimate \eqref{eq:BA-2-low} and fact $t\leq s^2$ within $\Kcal$, we find, for $|I|\leq N-3$,
\bea\notag
|Z^I \tV| &\lesssim |Z^I \hV|+|Z^I Q_0(\tw, \tw)|
\lesssim \big(\eps + (C_1 \eps)^2\big) t^{-1}+|Z^I \del \tw|^2
\lesssim \big(\eps + (C_1 \eps)^2 \big)t^{-1}.
\eea
Finally, by the transformation identity \eqref{eq:model2-tr}, decay estimate \eqref{eq:BA-2-low} and fact that $\delta \ll 1$, we have on $\Hcal_s$, for $|I|\leq N-3$,
\bea\notag
|Z^I \tv| &\lesssim |Z^I \tV|+|Z^I (\tw)^2|
\lesssim \big(\eps + (C_1 \eps)^2\big) t^{-1}.
\eea
The proof is complete.
\end{proof}

\subsubsection{The wave component}
\begin{lemma}
There exists a constant $C>0$ such that
\begin{align}
&\big\| (s/t) (t-r)^{-2\delta} \del_\alpha Z^I \tW \big\|_{L^2_f(\Hcal_s)}\leq C\eps + C(C_1\eps)^2, \quad &|I|&\leq N,
\\
&|\del Z^I \tW| \leq C (\eps + (C_1\eps)^2) s^{-1}(t-r)^{2\delta}, \quad &|I|&\leq N-2. \label{eq:m2-point-weight-W}
\end{align}
\end{lemma}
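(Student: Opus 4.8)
The plan is to establish the weighted $L^2$ bound first and then to deduce the pointwise bound from it via the weighted Sobolev inequality \eqref{eq:weightedSobolev}.

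For the weighted $L^2$ estimate, I would start from the fact that $\del_\alpha$ and the Lorentz boosts $L_a$ commute with $\Box$, so that for every $|I|+|J|\leq N$ the field $\del^I L^J \tW$ solves $-\Box(\del^I L^J \tW)=\del^I L^J(\tv^2)$; its Cauchy data on $\Hcal_{s_0}$ is determined through the equation by $\tv^2|_{\Hcal_{s_0}}$ and its derivatives, hence is of size $O(\eps^2)$ (and in fact vanishes when $|I|=0$). I would then apply the weighted energy estimate of Proposition \ref{prop:GhostWeight} with $m=0$ and the crucial choice $\gamma=4\delta$. Writing $\mathcal{E}(s)\define \|(s/t)(t-|x|)^{-2\delta}\del_\alpha \del^I L^J \tW\|^2_{L^2_f(\Hcal_s)}$ and splitting the source term by Cauchy--Schwarz as
\[
\int_{\Hcal_\tau}\tfrac{\tau}{t}(t-|x|)^{-4\delta}\big|\del^I L^J(\tv^2)\big|\,\big|\del_t \del^I L^J \tW\big|\,\di x
\ \leq\ \big\|(t-|x|)^{-2\delta}\del^I L^J(\tv^2)\big\|_{L^2_f(\Hcal_\tau)}\,\mathcal{E}(\tau)^{1/2},
\]
the standard absorption argument reduces everything to showing $\int_{s_0}^s \|(t-|x|)^{-2\delta}\del^I L^J(\tv^2)\|_{L^2_f(\Hcal_\tau)}\,\di\tau\lesssim \eps+(C_1\eps)^2$.

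To bound this, I would expand $\del^I L^J(\tv^2)$ by the Leibniz rule, placing the factor carrying at most $N/2$ vector fields in $L^\infty$ (using the sharp Klein--Gordon decay $|\del^{I_1}L^{J_1}\tv|\lesssim(\eps+(C_1\eps)^2)t^{-1}$ from Corollary \ref{m2:refinedpointKG}, which applies since $N/2\leq N-3$) and the remaining factor in $L^2$ (using $\|\del^{I_2}L^{J_2}\tv\|_{L^2_f(\Hcal_\tau)}\lesssim C_1\eps \tau^\delta$ from the bootstrap assumption \eqref{eq:BA-2-2}). The one computation that matters is the weight conversion: since $t-|x|=\tau^2/(t+|x|)\geq \tau^2/(2t)$ and $t\geq\tau$ on $\Hcal_\tau$ within $\Kcal$, one gets $(t-|x|)^{-2\delta}t^{-1}\lesssim t^{2\delta-1}\tau^{-4\delta}\lesssim \tau^{-1-2\delta}$, so the integrand is bounded by $(\eps+(C_1\eps)^2)C_1\eps\,\tau^{-1-\delta}$, which is integrable in $\tau$; collecting constants (and using $C_1\geq1$, $C_1\eps\leq1$) yields the asserted bound $\mathcal{E}(s)^{1/2}\leq C\eps+C(C_1\eps)^2$.

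For the pointwise estimate, I would apply the weighted Sobolev inequality \eqref{eq:weightedSobolev} with $\gamma=2\delta$ to $\del_\alpha\del^I L^J \tW$, giving
\[
\sup_{\Hcal_s}\big|s(t-|x|)^{-2\delta}\del_\alpha\del^I L^J \tW\big|\ \lesssim\ \sum_{|K|\leq2}\big\|(s/t)(t-|x|)^{-2\delta}L^K\del_\alpha\del^I L^J \tW\big\|_{L^2_f(\Hcal_s)},
\]
and then commute the (at most two) extra boosts $L^K$ through the derivatives using the commutator estimate \eqref{eq:est-cmt1}. Each resulting term is a sum of $\|(s/t)(t-|x|)^{-2\delta}\del_\beta\del^{I'}L^{J'}\tW\|_{L^2_f(\Hcal_s)}$ with $|I'|+|J'|\leq |I|+|J|+2\leq N$, which is exactly the range already controlled by the weighted $L^2$ estimate; dividing by $s$ gives $|\del\del^I L^J \tW|\leq C(\eps+(C_1\eps)^2)s^{-1}(t-|x|)^{2\delta}$ for $|I|+|J|\leq N-2$. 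The step I expect to require the most care is the choice of the ghost-weight exponent in Proposition \ref{prop:GhostWeight}: the naive choice $\gamma=2\delta$ leaves the source integrand only at the non-integrable rate $\tau^{-1}$ after using the sharp decay of $\tv$, producing a logarithmic loss, so one must take $\gamma=4\delta$ and exploit the additional factor $\tau^{-2\delta}$ coming from the interior weight conversion $(t-|x|)^{-2\delta}t^{-1}\lesssim\tau^{-1-2\delta}$ in order to close the $\tau$-integral.
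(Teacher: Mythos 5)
Your proposal is correct and follows essentially the same route as the paper: the ghost-weight estimate of Proposition \ref{prop:GhostWeight} with $m=0$, $\gamma=4\delta$ applied to $-\Box\tW=\tv^2$, the Leibniz splitting with the sharp decay $|\del^{I_1}L^{J_1}\tv|\lesssim C_1\eps t^{-1}$ on the low-order factor and $\|\del^{I_2}L^{J_2}\tv\|_{L^2_f(\Hcal_\tau)}\lesssim C_1\eps\tau^\delta$ on the high-order factor, the interior weight conversion $(t-|x|)^{-2\delta}t^{-1}\lesssim\tau^{-1-2\delta}$ to gain integrability, and finally the weighted Sobolev inequality \eqref{eq:weightedSobolev} for the pointwise bound. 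The only differences are cosmetic (Cauchy--Schwarz plus absorption versus the paper's three-factor H\"older plus Gr\"onwall, and quoting Corollary \ref{m2:refinedpointKG} instead of the bootstrap \eqref{eq:BA-2-5}); the parenthetical claim that the data of $\del^I L^J\tW$ vanishes when $|I|=0$ is slightly off for $|J|\geq 2$ (boosts bring in $\del_t^2\tW=\tv^2$ on $\Hcal_{s_0}$), but only $O(\eps^2)$ smallness is needed, so this is harmless.
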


\begin{proof}
We derive the weighted energy estimate for $\tW$ using the equation \eqref{eq:model2-wavetransf2} and Proposition \ref{prop:GhostWeight} with $m=0$ and $\gamma=4\delta$.  If $N$ is sufficiently large to apply the sharp pointwise estimate \eqref{eq:BA-2-low} ($\frac{N}{2}\leq N-3$ suffices), then we have, for $|I|+|J|\leq N$,
\bea\notag
\quad\big\| &(t-r)^{-2\delta} (s/t) \del_\alpha Z^I \tW \big\|_{L^2_f(\Hcal_s)}^2
\\
&\lesssim \big\| (t-r)^{-2\delta} (s/t) \del_\alpha Z^I \tW \big\|_{L^2_f(\Hcal_{s_0})}^2 
\\
	&\quad + \sum_{\substack{|I_1|\leq N/2,\, |I_2|\leq N}} 
	\int_{s_0}^s \Big\|\frac{(\tau/t)}{(t-r)^{4\delta}} (Z^{I_1} \tv) (Z^{I_2} \tv) \del_t Z^I \tW\Big\|_{L^1_f(\Hcal_\tau)} \di \tau 
\\
& \lesssim
\big\| (s/t) \del_\alpha Z^I \tW \big\|_{L^2_f(\Hcal_{s_0})}^2  
\\
&\quad + \sum_{\substack{|I_1|\leq N/2,\, |I_2|\leq N}} \int_{s_0}^s \Big\|\frac{\tau^\delta}{(t-r)^{2\delta}} Z^{I_1} v\Big\|_{L^\infty(\Hcal_{\tau})} \Big\|\frac{(\tau/t)}{(t-r)^{2\delta}} \del_t Z^I \tW \Big\|_{L^2_f(\Hcal_\tau)} \big\|\tau^{-\delta}Z^{I_2} \tv\big\|_{L^2_f(\Hcal_\tau)} \di \tau 
\\
& \lesssim
\eps^2 + (C_1 \eps)^2 \int_{s_0}^s \tau^{-1-\delta}  \Big\|\frac{(\tau/t)}{(t-r)^{2\delta}} \del_t Z^I \tW \Big\|_{L^2_f(\Hcal_\tau)} \di \tau.
\eea
Taking the sum over all $|I|\leq N$ and applying Gr\"onwall's inequality we find
\bea\notag
\sum_{|I|+|J|\leq N}\big\| &(t-r)^{-2\delta} (s/t) \del_\alpha Z^I \tW \big\|_{L^2_f(\Hcal_s)}^2
\lesssim \eps^2
+\Big(\int_{s_0}^s (C_1\eps)^2\tau^{-1-\delta} \di \tau \Big)^2
\eea
and thus, for any $|I|\leq N$,
\bea\notag
\big\| &(s/t) (t-r)^{-2\delta} \del_\alpha Z^I \tW \big\|_{L^2_f(\Hcal_s)}\lesssim \eps + (C_1\eps)^2.
\eea
Applying the weighted Sobolev estimate of \eqref{eq:weightedSobolev} we can find, for $|I|\leq N-2$,
\bea \notag
|\del Z^I \tW| \lesssim (\eps + (C_1\eps)^2) s^{-1}(t-r)^{2\delta}.
\eea
The proof is done.
\end{proof}

\begin{corollary}\label{corol:m2imp-w-point}
For any multi-indices of order  $|I|\leq N-3$, there exists a constant $C>0$ such that
\bea\notag
|Z^I \tw| \leq C(\eps + (C_1\eps)^2) s^{-1} (t-r)^{2\delta}.
\eea
\end{corollary}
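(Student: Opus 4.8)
The plan is to read off this estimate directly from the Katayama-type decomposition $\tw=\tWz+P^\alpha \del_\alpha \tW$ in \eqref{eq:model2-tr3}, combining the homogeneous-wave decay of $\tWz$ with the weighted pointwise bound for $\del \tW$ established just above. For a multi-index with $|I|+|J|\leq N-3$ I would write
$\del^I L^J \tw = \del^I L^J \tWz + P^\alpha\,\del^I L^J \del_\alpha \tW$
and treat the two contributions separately.

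For the $\tWz$ term, estimate \eqref{eq:m2conf102} gives $|\del^I L^J \tWz|\lesssim \eps s^{-1}$ for all $|I|+|J|\leq N-2$, hence in particular for $|I|+|J|\leq N-3$. Since we work inside the cone $\Kcal$, where $t-|x|\geq 1$ and therefore $(t-|x|)^{2\delta}\geq 1$, this immediately upgrades to $|\del^I L^J \tWz|\lesssim \eps s^{-1}(t-|x|)^{2\delta}$, which is already of the desired form.

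For the $\del \tW$ term, I would first commute the vector fields past $\del_\alpha$ using the commutator estimate \eqref{eq:est-cmt1}, which yields $|\del^I L^J \del_\alpha \tW|\lesssim \sum_{|I'|\leq |I|,\,|J'|\leq |J|}|\del\, \del^{I'}L^{J'}\tW|$, so that for $|I|+|J|\leq N-3$ the right-hand side involves only $|I'|+|J'|\leq N-3\leq N-2$. The weighted pointwise estimate \eqref{eq:m2-point-weight-W} then applies to each term and gives $|\del^I L^J \del_\alpha \tW|\lesssim (\eps+(C_1\eps)^2)\,s^{-1}(t-r)^{2\delta}$. Adding the two contributions and absorbing the harmless constant $|P^\alpha|$ into $C$ produces the claimed bound.

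I do not expect any genuine obstacle: the corollary is an essentially immediate consequence of the transformation \eqref{eq:model2-tr3} together with \eqref{eq:m2conf102} and \eqref{eq:m2-point-weight-W}, all already available. The only point requiring a moment's care is the regularity bookkeeping — the one derivative $\del_\alpha$ plus the order-preserving commutator \eqref{eq:est-cmt1} must keep us within the range $|I|+|J|\leq N-2$ where the input estimates hold, which is exactly why the conclusion is stated only for $|I|+|J|\leq N-3$.
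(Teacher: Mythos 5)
Your proposal is correct and is essentially identical to the paper's proof, which likewise obtains the bound by combining the decomposition \eqref{eq:model2-tr3} with the homogeneous-wave decay \eqref{eq:m2conf102}, the weighted pointwise estimate \eqref{eq:m2-point-weight-W}, and the commutator estimate \eqref{eq:est-cmt1}. The only cosmetic remark is that your bookkeeping would in fact allow $|I|+|J|\leq N-2$; the statement is given at $N-3$ to match the bootstrap range \eqref{eq:BA-2-3}, not because the argument forces it.
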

\begin{proof}
The proof follows by combining the decay estimates \eqref{eq:m2-point-weight-W} and \eqref{eq:m2conf102} with the decomposition formula \eqref{eq:model2-tr} and the commutator estimate \eqref{eq:est-cmt1}.
\end{proof}

\subsection{Refined highest-order energy estimates}
\subsubsection{Wave component}
\begin{lemma}\label{lem:m2impwave}
For all multi-indices of order $|I|\leq N$ there exists a constant $C>0$ such that
\bea\notag
E(s,Z^I \tw)^{1/2}
\leq C\eps + C(C_1 \eps)^2 s^\delta.
\eea
\end{lemma}

\begin{proof}
For $|I|\leq N$, and $N$ sufficiently large to apply the sharp pointwise estimate \eqref{eq:BA-2-low} (this requires $N/2\leq N-4$), we have
\bea\notag
E(s,Z^I \tw)^{1/2}
&\leq 
E(s_0, Z^I \tw)^{1/2}
+
\int_{s_0}^s \| Z^I (P^\alpha \del_\alpha (\tv^2))\|_{L^2_f(\Hcal_{\tau})} \, \di\tau
\lesssim \eps + (C_1 \eps)^2 s^\delta.
\eea
The proof is complete.
\end{proof}

\begin{corollary}\label{m2:refinedpointdelwave}
There exists a constant $C>0$ such that
\bea\notag
|\del Z^I \tw| \leq &\big(\eps + (C_1\eps)^2 \big)s^{-1},
\qquad
|I|\leq N-3.
\eea
\end{corollary}

\begin{proof}
We use Lemmas \ref{lem:est-comm} and \ref{lem:Wavedeldel} together with the pointwise estimates \eqref{eq:BA-2-low}, \eqref{eq:m2conf102} and \eqref{eq:m2-point-weight-W}, and the decomposition formula \eqref{eq:model2-tr} with associated PDE \eqref{eq:model2-wavetransf2}. We find, for $|I|\leq N-3$,
\bea\notag
|Z^I \del \tw|
&\lesssim |Z^I \del \tWz| + |Z^I \del( P^\alpha \del_\alpha\tW)|
\\
&\lesssim \sum_{|I'|\leq |I|}|\del Z^{I'} \tWz|
+ \frac{1}{(t-r)}
	\sum_{|J|\leq 1} \big| \del Z^{J} (Z^I \tW) \big| 
 + \frac{t}{(t-r)} \Big(\sum_{\substack{|I'|\leq |I|}} \big|Z^{I'} \tv \big|\Big)^2
\\
&\lesssim \big(\eps + (C_1\eps)^2 \big)s^{-1}.
\eea
The proof is done.
\end{proof}

\subsubsection{Klein-Gordon component}
\begin{lemma}\label{lem:m2impKG}
There exists a constant $C>0$ such that
\bea\notag
E_1(s, Z^I \tv)^{1/2}
\leq C \eps + (C_1 \eps)^2 s^\delta,
\qquad
|I|+|J|\leq N.
\eea
\end{lemma}

\begin{proof}
By applying the energy estimate from Proposition \ref{prop:BasicEnergyEstimate} to the equation \eqref{eq:model2-KGtransf1}, we find that, for any multi-indices of order $|I|+|J|\leq N$,
\bea \label{eq:641}
E_1(s, Z^I \tV)^{1/2}
&\leq
E_1(s_0, Z^I \tV)^{1/2}
+ \int_{s_0}^s  \|Z^I Q_0(\tw, \tw)\|_{L^2_f(\Hcal_{\tau})} +\|Z^I(\tw \tv \del\tv )\|_{L^2_f(\Hcal_{\tau})} \, \di \tau.
\eea
We estimate the above two terms under the integral separately. 

For the first term, we can apply Lemma \ref{lem:null}  to obtain
\bea\label{eq:M2nullstuff} 
|  Z^I Q_0(\tw, \tw) |
& \lesssim
(s/t)^2 \sum_{\substack{| I_1 | + | I_2 | \leq | I | }} \big| Z^{I_1}  \del_t \tw \cdot Z^{I_2}  \del_t \tw \big|
\\
&\quad
	+ \sum_{\substack{| I_1 | + | I_2 | \leq | I | }}\sum_{a, \beta} \big| Z^{I_1}  \underdel_a \tw \cdot Z^{I_2}  \underdel_\beta\tw \big| .
\eea
We start by looking at the first term of \eqref{eq:M2nullstuff}. By symmetry, if $|I_1|\leq N/2\leq N-3$  we can use Lemma \ref{lem:est-comm} and the sharp estimates \eqref{eq:BA-2} to find
\bea \notag
\|(s/t)^2 Z^{I_1}  \del_t \tw \cdot Z^{I_2}  \del_t \tw\|_{L^2_f(\Hcal_{s})}
\lesssim \|(s/t)(C_1\eps s^{-1})\|_{L^\infty(\Hcal_{s})} (C_1 \eps s^\delta)\lesssim (C_1\eps)^2 s^{-1+\delta}.
\eea
Next we look at the second term of \eqref{eq:M2nullstuff}. If $|I_1|\leq N/2\leq N-4$  we have, using the commutator estimates of Lemma \ref{lem:est-comm}, and the estimate \eqref{eq:M2-pointwise-prelim2},
\bea\notag
\|(t/s) Z^{I_1}  \underdel_a \tw \cdot (s/t) Z^{I_2}  \underdel_\beta\tw \|_{L^2_f(\Hcal_{s})}
&\lesssim \|(t/s)(C_1\eps t^{-1}s^{-1}(t-r)^{2\delta})\|_{L^\infty(\Hcal_{s})} (C_1 \eps s^\delta)
\\
&\lesssim (C_1\eps)^2 s^{-1+\delta}.
\eea
While if $|I_2|\leq N/2\leq N-4$  we have, using the commutator estimates of Lemma \ref{lem:est-comm}, \eqref{eq:BA-2} and \eqref{eq:M2-pointwise-prelim2},
\bea\notag
\|Z^{I_1}  \underdel_a \tw \cdot Z^{I_2}  \underdel_\beta\tw \|_{L^2_f(\Hcal_{s})}
\lesssim \|(C_1\eps t^{-1}s^{-1}(t-r)^{2\delta}) \|_{L^\infty(\Hcal_{s})}(C_1 \eps s^\delta)
\lesssim (C_1\eps)^2 s^{-1+\delta}.
\eea

For the second term under the integral in \eqref{eq:641}, which is cubic, we easily get
$$
\|Z^I(\tw \tv \del\tv )\|_{L^2_f(\Hcal_{s})} 
\lesssim
(C_1\eps)^3 s^{-2+4\delta}.
$$


Inserting all these estimates into \eqref{eq:641} we find
\bea\notag
E_1(s, Z^I \tV)^{1/2}
&\lesssim
\eps
+ \int_{s_0}^s (C_1\eps)^2 \tau^{-1+\delta}\, \di \tau
\lesssim \eps + (C_1 \eps)^2 s^\delta.
\eea

Finally we can convert this information back into an estimate on the original variable $\tv$. By Minkowski's inequality, this gives
\bea \label{eq:807}
E_1(s, Z^I \tv)^{1/2} &\lesssim
E_1(s, Z^I \tV)^{1/2} + E_1(s, Z^I \tw^2)^{1/2}
\\
&\lesssim \eps + (C_1 \eps)^2 s^\delta + 
\|(s/t)\del_t Z^I( \tw^2)\|_{L^2_f(\Hcal_{s})} +
\sum_{a}\|\underdel_a Z^I( \tw^2)\|_{L^2_f(\Hcal_{s})} \\
&\quad +
\|Z^I( \tw^2)\|_{L^2_f(\Hcal_{s})} .
\eea

We start with the third term on the right-hand-side of \eqref{eq:807}. Using \eqref{eq:m2conf103}, \eqref{eq:BA-2-low} and \eqref{eq:M2-pointwise-prelim3} (this requires $N/2\leq N-3$) we have
\bea\notag
\|(s/t)\del_t Z^I( \tw^2)\|_{L^2_f(\Hcal_{s})} 
&\lesssim
\sum_{\substack{|I_1|\leq N, \,|I_2|\leq N/2}}
\|(s/t)\del_t \big(Z^{I_1} \tw\cdot Z^{I_2} \tw\big)\|_{L^2_f(\Hcal_{s})} 
\\
&\lesssim
\sum_{\substack{|I_1|\leq N,\,|I_2|\leq N/2}}\Big(
\|Z^{I_2} \tw\|_{L^\infty(\Hcal_{s})} \|(s/t)\del_t Z^{I_1} \tw \|_{L^2_f(\Hcal_{s})} 
\\
&\qquad \qquad+
\|\del_t Z^{I_2} \tw\|_{L^\infty(\Hcal_{s})} \|(s/t)Z^{I_1} \tw \|_{L^2_f(\Hcal_{s})} \Big)
\\
&\lesssim
(C_1\eps)^2 s^\delta.
\eea

We next look at the fourth term on the right-hand-side of \eqref{eq:807}. Using \eqref{eq:m2conf103}, \eqref{eq:BA-2-low} and \eqref{eq:M2-pointwise-prelim2} (this requires $N/2\leq N-3$) we have
\bea\notag
\sum_a\|\underdel_a Z^I( \tw^2)\|_{L^2_f(\Hcal_{s})} 
&\lesssim
\sum_{\substack{|I_1|\leq N,\,|I_2|\leq N/2}}
\|\underdel_a \big(Z^{I_1} \tw\cdot Z^{I_2} \tw\big)\|_{L^2_f(\Hcal_{s})} 
\\
&\lesssim
\sum_{\substack{|I_1|\leq N,\,|I_2|\leq N/2}}\Big(
\|Z^{I_2} \tw\|_{L^\infty(\Hcal_{s})} \|\underdel_a  Z^{I_1} \tw \|_{L^2_f(\Hcal_{s})} 
\\
&\qquad \qquad+
\|(t/s)\underdel_a  Z^{I_2} \tw\|_{L^\infty(\Hcal_{s})} \|(s/t)Z^{I_1} \tw \|_{L^2_f(\Hcal_{s})} \Big)
\\
&\lesssim
(C_1\eps)^2 s^\delta.
\eea

We can now turn to the final term on the right-hand-side of \eqref{eq:807}. Using the estimate \eqref{eq:583} and \eqref{eq:BA-2-low} we find, provided $2\delta<1$,
\bea\label{eq:m2l2contr}
\|Z^I( \tw^2)\|_{L^2_f(\Hcal_{s})} 
\lesssim
\sum_{\substack{|I_1|\leq N,\,|I_2|\leq N/2}}
 \| (s/t)Z^{I_1}\tw\|_{L^2_f(\Hcal_{s})}  \|(t/s)Z^{I_2}  \tw\|_{L^\infty(\Hcal_{s})}
\lesssim 
(C_1 \eps)^2 s^\delta.
\eea

In conclusion, we have
\bea \notag
E_1(s, Z^I \tv)^{1/2} &
\lesssim \eps + (C_1 \eps)^2 s^\delta .
\eea
The proof is done.
\end{proof}

\begin{proposition}\label{prop:m2finalising}
For a fixed $0<\delta\ll1$ there exists an $N\in\mathbb{N}$ and an $\eps_0>0$ and $C_1>C_0>0$ such that for all $0<\eps\leq \eps_0$, if the bounds \eqref{eq:BA-2} hold on a hyperbolic time interval $[s_0, s_1]$, then the refined estimates
\begin{subequations} \label{eq:BA-2i}
\begin{align}
E(s, Z^I \tw)^{1/2} + E_1 (s, Z^I \tv)^{1/2}  &\leq {1\over 2} C_1 \eps s^\delta,
\qquad
&|I| &\leq N, \label{eq:BA-2-highR}
\\
s (t-r)^{-2\delta} |Z^I \tw| + s |\del Z^I \tw| + t |Z^I \tv|
&\leq {1\over 2} C_1 \eps,
\qquad
&|I| &\leq N-3, \label{eq:BA-2-lowR}
\end{align}
\end{subequations}
hold on the same hyperbolic time interval. 
\end{proposition}
\begin{proof}
The proof follows immediately by bringing together the results of corollaries \ref{corol:m2imp-w-point}, \ref{m2:refinedpointKG}, \ref{m2:refinedpointdelwave} and Lemmas \ref{lem:m2impwave} and \ref{lem:m2impKG}. The restriction of $N\geq 8$ comes from Lemma \ref{lem:m2impKG}. 
\end{proof}

\begin{proof}[Proof of Theorem \ref{thm2}.]
We can now complete the bootstrap argument. Firstly, as shown in \cite[\textsection11]{PLF-YM-book}, initial data posed on the hypersurface $\{t_0=0\}$ and localised in the unit ball $\{x\in\RR^2:r\leq 1\}$ can be developed as a solution of the PDE to the initial hyperboloid $\Hcal_{s_0=2}$ with the smallness conserved. This justifies the bound \eqref{eq:m2BApre}. Next, by classical local existence results for quasilinear hyperbolic PDEs, the bounds \eqref{eq:BA-2} hold whenever the solution exists. Clearly $s^*>s_0$ and if $s^*<+\infty$ then one of the inequalities in \eqref{eq:BA-2} must be an equality. We see then by Proposition \ref{prop:m2finalising}, that by choosing $C_1, C_0$ sufficiently large and $\eps_0$ sufficiently small, the bounds \eqref{eq:BA-2} are in fact refined. This then implies that $s^*=+\infty$ and so the local solution extends to a global one. 
\end{proof}

{\footnotesize
 
}

\end{document}